\documentclass[12pt]{article}
\usepackage{a4wide}
\usepackage[french,english]{babel}
\usepackage[utf8]{inputenc}
\usepackage[T1]{fontenc}
\usepackage{amsthm}
\usepackage{amssymb}
\usepackage{amsmath}
\usepackage{stmaryrd}
\usepackage{imakeidx}
\usepackage{graphicx}
\usepackage{caption}
\theoremstyle{plain}
\usepackage[colorlinks=true,breaklinks=true,linkcolor=black]{hyperref} %pour faire des liens hypertextes
\usepackage{amsfonts}
\usepackage{appendix}
\usepackage{ulem}
\usepackage{comment}
 %\'ecriture en marge activ\'ee avec \marge{}

\newtheorem{Theorem}{Theorem}[section] %[subsection sert à numéroter les théorèmes d'après la sous-section où ils se trouvent
\newtheorem{Proposition}[Theorem]{Proposition}
\newtheorem{Corollary}[Theorem]{Corollary}

\newtheorem{Lemma}[Theorem]{Lemma}

\newtheorem{Remark}[Theorem]{Remark}
\newtheorem{Definition/Proposition}[Theorem]{Definition/Proposition}
\usepackage{dsfont}
\date{}
\makeindex[columns=3]
\usepackage{mathrsfs}
\usepackage[all]{xy}
\theoremstyle{Definition}

\title{Topologies on split Kac-Moody groups over valued fields}
\bibliographystyle{alpha}%pour que les références soient une partie du nom+l'année
\author{Auguste \textsc{Hébert} \\Université de Lorraine, Institut Élie Cartan de Lorraine, F-54000 Nancy, France\\ UMR 7502,
auguste.hebert@univ-lorraine.fr}

\makeatletter
\renewcommand\theequation%
{\thesection.\arabic{equation}}
\@addtoreset{equation}{section}
\makeatother

\theoremstyle{Definition}
\newtheorem{proposition*}[thmbis]{Proposition}

\theoremstyle{Definition}
\newtheorem{proposition**}[thmter]{Proposition}

\makeatletter \@addtoreset{figure}{section}\makeatother

\setcounter{secnumdepth}{3}
\setcounter{tocdepth}{3}

\newcommand{\R}{\mathbb{R}}
\newcommand{\A}{\mathbb{A}}

\newcommand{\N}{\mathbb{N}}
\newcommand{\Z}{\mathbb{Z}}
\newcommand{\Q}{\mathbb{Q}}
\newcommand{\C}{\mathbb{C}}

\newcommand{\I}{\mathcal{I}}
\newcommand{\T}{\mathcal{T}}

\newcommand{\conv}{\mathrm{conv}}

\newcommand{\cl}{\mathrm{cl}}

\newcommand{\g}{\mathfrak{g}}

\newcommand{\htt}{\mathrm{ht}}
\newcommand{\Hom}{\mathrm{Hom}}

\newcommand{\qp}{\varpi}

\newcommand{\efface}[1]{}

\newcommand{\dw}{d^{W^+}}

\newcommand{\Red}{\mathrm{Red}}
\def\k{{\Bbbk}}
\newcommand{\Ch}{\mathrm{Ch}}
\newcommand{\set}[1]{\{ #1\} } %  \set0 donne {0}

\newcommand{\cA}{\mathcal{A}}

\newcommand{\cB}{\mathcal{B}}
\newcommand{\fB}{\mathfrak{B}}

\newcommand{\cC}{\mathcal{C}}

\newcommand{\cF}{\mathcal{F}}

\newcommand{\fG}{\mathfrak{G}}

\newcommand{\cH}{\mathcal{H}}

\newcommand{\cK}{\mathcal{K}}

\newcommand{\fN}{\mathfrak{N}}

\newcommand{\cO}{\mathcal{O}}

\newcommand{\fP}{\mathfrak{P}}

\newcommand{\fQ}{\mathfrak{Q}}

\newcommand{\fR}{\mathfrak{R}}

\newcommand{\sR}{\mathscr{R}}

\newcommand{\cS}{\mathcal{S}}

\newcommand{\cT}{\mathcal{T}}
\newcommand{\fT}{\mathfrak{T}}

\newcommand{\sT}{\mathscr{T}}

\newcommand{\cU}{\mathcal{U}}
\newcommand{\fU}{\mathfrak{U}}

\newcommand{\cV}{\mathcal{V}}

\newcommand{\cZ}{\mathcal{Z}}

\newcommand{\fm}{\mathfrak{m}}

\newcommand{\fq}{\mathfrak{q}}
%\renewcommand{\rq}{\mathrm{q}}

% MATHBB shortcuts

% Preferences
\renewcommand{\phi}{\varphi}
\renewcommand{\emptyset}{\varnothing}

\renewcommand{\tilde}[1]{\widetilde{#1}}

\makeatletter
\def\Ddots{\mathinner{\mkern1mu\raise\p@
\vbox{\kern7\p@\hbox{.}}\mkern2mu
\raise4\p@\hbox{.}\mkern2mu\raise7\p@\hbox{.}\mkern1mu}}
\makeatother

% Algebra

% Lie groups

% Lie algebras

\newcommand{\Inv}{\mathrm{Inv}}

\newcommand{\kk}{\Bbbk}
\newcommand{\Stab}{\mathrm{Stab}}
\newcommand{\Fix}{\mathrm{Fix}}

\newenvironment{psmallmatrix}
  {\left(\begin{smallmatrix}}
  {\end{smallmatrix}\right)}

\usepackage[left,modulo]{lineno}

\hypersetup{
pdfauthor={Hébert},
pdftitle={Topology Kac-Moody groups},
%pdfsubject={Description (article, mémoire de M2, thèse de doctorat, etc.)},
%pdfkeywords={mots-clefs}
}

\begin{document}

\maketitle

\begin{abstract}
Let $G$ be a minimal split Kac-Moody group  over a valued field $\cK$. Motivated by the representation theory of $G$, we define two topologies of topological group on $G$, which take into account the topology on $\cK$. 
\end{abstract}

\tableofcontents

\section{Introduction}

\subsection{Motivation from representation theory}

Let $G$ be a reductive group over a nonArchimedean local field $\cK$. As $G$ is finite dimensional over $\cK$, $G$ is naturally equipped with a topological group structure. Its admits a  basis of  neighbourhood of the identity consisting of open compact subgroups. A complex representation $V$ of $G$ is called smooth if for every $v\in V$, the fixator of $v$ in $G$ is open. To every compact open subgroup $K$ of $G$ is associated a Hecke algebra $\cH_K$, which is the space of $K$-bi-invariant functions from $G$ to $\C$ which have compact support. Let $V$ be a smooth representation of $G$. Then the space of $K$-invariant vectors $V^K$ is naturally equipped with the structure of an $\cH_K$-module, and we can prove that this assignment induces a bijection between the irreducible smooth representations of $G$ admitting a non zero $K$-invariant vector and the irreducible representations of $\cH_K$. 

Kac-Moody groups are infinite dimensional generalizations of reductive groups. For example, if  $\mathring{\fG}$ is a split reductive group and $\cF$ is a field,  then the associated affine Kac-Moody group is a central extension of $\mathring{\fG}(\cF[u,u^{-1}])\rtimes \cF^*$, where $u$ is an indeterminate.   Let now $G=\fG(\cK)$ be a split Kac-Moody group over $\cK$. Recently, Hecke algebras were associated to $G$. In \cite{braverman2011spherical} and \cite{gaussent2014spherical}, Braverman and Kazhdan (in the affine case) and Gaussent and Rousseau (in the general case) associated a spherical Hecke algebra $\cH_s$  to $G$, i.e an algebra associated to the spherical subgroup $\fG(\cO)$ of $G$, where $\cO$ is the ring of integers of $\cK$. In \cite{braverman2016iwahori} and \cite{bardy2016iwahori}, Braverman, Kazhdan and Patnaik and Bardy-Panse, Gaussent and Rousseau defined the Iwahori-Hecke algebra $\cH_I$ of $G$ (associated to the Iwahori subgroup $K_I$ of $G$). In \cite{abdellatif2019completed}, together with Abdellatif, we associated Hecke algebras to certain parahoric subgroups of $G$, which generalizes the construction of the Iwahori-Hecke algebra of $G$. In \cite{hebert2022principal}, \cite{hebert2021decompositions} and \cite{hebert2021kato}, we associated and studied principal series representations of $\cH_I$.

For the moment, there is no link between the representations of $G$ and the representations of its Hecke algebras.  It seems natural to try to attach an irreducible representation of $G$ to each irreducible representation of $\cH_I$. A more modest task would be to associate to each principal series representation $I_\tau$ of $\cH_I$ a principal series representation $I(\tau)$ of $G$, which is irreducible when $I_\tau$ is.

Let $T$ be a maximal split torus of $G$ and $Y$ be the cocharacter lattice of $(G,T)$.  Let $B$ be a Borel subgroup of $G$ containing $T$.  Let $T_\C=\Hom_{\mathrm{Gr}}(Y,\C^*)$ and $\tau\in T_\C$. Then $\tau$ can be extended to a character $\tau:B\rightarrow \C^*$. Assume that $G$ is reductive. Then the principal series representation $I(\tau)$  of $G$ is the induction of $\tau\delta^{1/2}$ from $B$ to $G$, where $\delta:B\rightarrow \R^*_+$ is the modulus character of $B$. More explicitly, this  is the space of locally constant functions $f:G\rightarrow \C$ such that $f(bg)=\tau\delta^{1/2}(b)f(g)$ for every $g\in G$ and $b\in B$. Then $G$ acts on $I(\tau)$ by right translation. Then $I_\tau:=I(\tau)^{K_I}$ is a representation of $\cH_I$. Assume now that $G$ is  a Kac-Moody group. Then we do not know what ``locally constant'' mean, but we can define the representation $\widehat{I(\tau)}$ of $G$ as the set of functions $f:G\rightarrow \C$ such that $f(bg)=\tau\delta^{1/2}(b)f(g)$ for every $g\in G$ and $b\in  B$. Let $\sT_G$  be a topology of topological group on $G$ such that $K_I$ is open. Then \begin{equation}\label{e_principal_series_representation}
I(\tau)_{\sT_G}:=\{f\in \widehat{I(\tau)}\mid f\text{ is locally constant for }\sT_G\}
\end{equation} is a subrepresentation of $G$ containing $\widehat{I(\tau)}^{K_I}$. Thus if we look for an irreducible representation containing $\widehat{I(\tau)}^{K_I}$ it is natural to search it inside $I(\tau)_{\sT_G}$. Moreover, the more $\sT_G$ is coarse, the smaller $I(\tau)_{\sT_G}$ is. We thus look for the coarsest topology of topological group on $G$ for which $K_I$ is open. 

\subsection{Topology on $G$, masure and main results}

We now assume that $\cK$ is any field equipped with a valuation $\omega:\cK\rightarrow \R\cup\{+\infty\}$ such that $\omega(\cK^*)\supset \Z$.  We no longer assume $\cK$ to be local, and $\omega(\cK^*)$ can be dense in $\R$. Let $\cO$ be its ring of integers. Let $\fG$ be a split  Kac-Moody group (à la Tits, as defined in \cite{tits1987uniqueness}) and $G=\fG(\cK)$.  In \cite{gaussent2008kac} and \cite{rousseau2016groupes}, Gaussent and Rousseau associated to $G$ a kind of Bruhat-Tits building, called a masure, on which $G$ acts (when $G$ is reductive, $\I$ is the usual Bruhat-Tits building). They defined the spherical subgroup $K_s$ as the fixator of some vertex $0$ in the masure (we prove in Proposition~\ref{p_fixator_zero} that  $K_s=\fG^{\min}(\cO)$, where $\fG^{\min}$ is the minimal Kac-Moody group defined by Marquis in \cite{marquis2018introduction}). They also define the Iwahori subgroup $K_I$ as the fixator of some alcove  $C_0^+$ of $\I$. Then we define the topology $\sT_\Fix$ on $G$ as follows. A subset $V$ of $G$ is open if for every $g\in V$, there exists a finite subset $F$ of $\I$ such that $G_F.g\subset V$, where $G_F$ is the fixator of $F$ in $G$. Then we prove that  $\sT_{\Fix}$ is the coarsest topology of topological group on $G$ for which $K_I$ is open (see Proposition~\ref{p_characterization_Tfix}). However, it is not Hausdorff in general. Indeed, let $\cZ\subset T$ be the center of $G$ and $\cZ_\cO=\cZ\cap \fT(\cO)$. Then $\cZ_\cO$ is the fixator of $\I$ in $G$ and when $\cZ_\cO$ is nontrivial (which already happens for $\mathrm{SL}_2(\cK)$), $\sT_\Fix$ is not Hausdorff. 

To address this issue, we define an other topology, $\sT$, finer than $\sT_\Fix$ and Hausdorff. Let $\A=Y\otimes \R$ be the standard apartment of $\I$ and $\Phi\subset \A^*$ be the set of roots of $(G,T)$. Then $\I=\bigcup_{g\in G} g.\A$.   Let us begin with the case where $G=\mathrm{SL}_2(\cK)$.  Let $\qp\in \cO$ be such that $\omega(\qp)=1$. For $n\in \N^*$, let $\pi_n:\mathrm{SL}_2(\cO)\rightarrow \mathrm{SL}_2(\cO/\qp^n\cO)$ be the natural projection.  Then a basis of the neighbourhood of the identity is given by the $(\ker \pi_n)_{n\in \N^*}$. Let $U^+=\begin{psmallmatrix} 1 & *\\ 0 & 1\end{psmallmatrix}$ and $U^-=\begin{psmallmatrix} 1 & 0\\ * & 1\end{psmallmatrix}$.  Then one can prove that $\ker \pi_n= (U^+\cap \ker \pi_n).(U^-\cap \ker \pi_n).(T\cap \ker \pi_n)$. Let $\alpha,-\alpha$ be the two roots of $(G,T)$. Let  $x_\alpha:a\mapsto \begin{psmallmatrix} 1 & a\\ 0 & 1\end{psmallmatrix}$ and $x_{-\alpha}:a\mapsto \begin{psmallmatrix} 1 & 0\\ a & 1\end{psmallmatrix}$. Then $x_\alpha(\qp^n\cO)$ fixes $\{a\in \A\mid \alpha(a)\geq -n\}$ and $x_{-\alpha}(\qp^n \cO)$ fixes $\{a\in \A\mid \alpha(a)\leq n\}$. Therefore if $\lambda\in \A$ is such that $\alpha(\lambda)=1$ and $[-n\lambda,n\lambda]=\alpha^{-1}([-n,n])$, we have \[\ker \pi_n=\left(U^+\cap \Fix_G([-n\lambda,n\lambda])\right).\left( U^-\cap \Fix_G([-n\lambda,n\lambda])\right).(T\cap \ker \pi_n).\] We now return to the general case for $G$. We prove that the topology associated to $(\ker \pi_n)_{n\in \N^*}$ is not a topology of topological group if $G$ is not reductive (see Lemma~\ref{l_non_conjugacy_invariance_congruence}). Let $(\alpha_i)_{i\in I}$ be the set of simple roots of $(G,T)$ and  $C^v_f=\{x\in \A\mid \alpha_i(x)>0,\forall i\in I\}$. Let $W^v$ be the Weyl group of $(G,T)$ and  $\lambda\in Y\cap \bigsqcup_{w\in W^v}w.C^v_f$. We define the following  subset $\cV_{n\lambda}$ of $G$, for $n\in \N^*$: \[\cV_{n\lambda}=\left(U^+\cap \Fix_G([-n\lambda,n\lambda])\right).\left(U^-\cap \Fix_G([-n\lambda,n\lambda])\right).(T\cap \ker \pi_{2N(\lambda)}),\] where $N(\lambda)=\min \{|\alpha(\lambda)|\mid \alpha\in  \Phi_+\}$. We prove the following theorem: \begin{Theorem}(see Theorem~\ref{t_conjugation_invariance}, Lemma~\ref{l_decomposition_Vlambda}, Proposition~\ref{p_comparison_T_TFix} and Proposition~\ref{p_nonCompactness}):  
\begin{enumerate}
\item For $n\in \N^*$ and $\lambda\in Y\cap \bigsqcup_{w\in W^v} w.C^v_f$, $\cV_{n\lambda}$ is a subgroup of $\fG^{\min}(\cO)$. 

\item The topology $\sT$ associated with $(\cV_{n\lambda})_{n\in \N^*}$ is Hausdorff, independent of the choice of $\lambda$ and equips $G$ with the structure of a topological group. 

\item The topology $\sT$ is finer than $\sT_{\Fix}$ and if $\cK$ is Henselian,  we have $\sT=\sT_\Fix$ if and only if $\cZ_\cO=\{1\}$.

\item Every compact subset of $G$ has empty interior (for $\sT$).
\end{enumerate}
\end{Theorem}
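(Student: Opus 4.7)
The plan is to treat the four assertions in sequence, with different techniques for each.

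\textbf{Part 1 (subgroup property).} The natural approach is to establish an Iwahori-type factorization for $\cV_{n\lambda}$. Write $U^\pm_\lambda:=U^\pm\cap\Fix_G([-n\lambda,n\lambda])$ and $T_n:=T\cap \ker\pi_{2N(\lambda)}$. Since $\lambda\in\bigsqcup_{w\in W^v}w.C^v_f$ is regular, each $U^\pm_\lambda$ should decompose as a (set-theoretic) product over the positive/negative real root subgroups intersected with suitable congruence conditions, with the exponents determined by $\alpha(\lambda)$. The first step is therefore to write $U^+_\lambda=\prod_{\alpha\in\Phi_+}x_\alpha(\qp^{k_\alpha}\cO)$ for appropriate $k_\alpha\in\Z$ depending on $n$ and $\lambda$, and similarly for $U^-_\lambda$. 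Closure under multiplication and inversion then reduces to: (i) root-subgroup commutation relations $[x_\alpha(a),x_\beta(b)]\in\prod_{p\alpha+q\beta\in\Phi_+,\,p,q>0}x_{p\alpha+q\beta}(\ldots)$ remaining inside $U^+_\lambda$ or $U^-_\lambda$; (ii) the interchange identity sending $x_{-\alpha}(a)x_\alpha(b)$ into the ``$U^+\cdot T\cdot U^-$'' order, provided the mixed commutators land in $T_n\cdot U^\pm_\lambda$. Here the role of the factor $2N(\lambda)$ in $\ker\pi_{2N(\lambda)}$ is precisely to absorb the valuations of the $T$-part arising from these commutations. This is the main technical obstacle, and it must be carried out inside Marquis's minimal Kac-Moody group $\fG^{\min}(\cO)$ in order that the infinite products make sense.

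\textbf{Part 2 (topology).} Once each $\cV_{n\lambda}$ is a subgroup, the filtered-subgroup criterion reduces the topological-group axioms to two statements:
\begin{enumerate}
\item \emph{Filtration:} for any $n,n'$, some $\cV_{m\lambda}\subset\cV_{n\lambda}\cap\cV_{n'\lambda}$ — immediate from monotonicity in $n$.
\item \emph{Conjugation invariance:} for every $g\in G$ and every $n$, there is $m$ with $g\cV_{m\lambda}g^{-1}\subset\cV_{n\lambda}$.
\end{enumerate}
The conjugation statement is the heart of Part~2. Using the Birkhoff/Bruhat decomposition of $G$ relative to $T$, reduce to the case $g\in U^\pm$, $g\in N(T)$, and $g\in T$; each is handled by shifting the segment $[-n\lambda,n\lambda]$ by $g$ and enlarging $n$ so that $[-m\lambda,m\lambda]$ covers $g\cdot[-n\lambda,n\lambda]$, combined with the root-subgroup commutation relations used in Part~1. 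Hausdorffness follows from $\bigcap_n\cV_{n\lambda}=\{1\}$: the $U^\pm$-parts intersect to $\{1\}$ since their elements would fix all of $\A$ and hence lie in $T$, while $\bigcap_n T\cap\ker\pi_{2N(\lambda)n}=\{1\}$ because $\omega$ is a valuation. Independence from $\lambda$ then follows by comparing $\cV_{n\lambda}$ and $\cV_{n'\mu}$: for $n'$ large enough $[-n'\mu,n'\mu]\supset[-n\lambda,n\lambda]$, giving inclusion of the $U^\pm$-parts, and the $T$-parts match up by choosing multiples.

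\textbf{Part 3 ($\sT$ vs.\ $\sT_\Fix$).} Finer: each $\cV_{n\lambda}$ is contained in $\Fix_G([-n\lambda,n\lambda])$ up to the central torus piece, hence is a $\sT_\Fix$-open subgroup. For the converse under the Henselian hypothesis, the strategy is that when $\cZ_\cO=\{1\}$, the map $T\cap K_I\to\prod_{i}\cO^*$ given by the simple roots separates elements, so that fixing a large enough finite set $F\subset\I$ already forces $T$-elements into $\ker\pi_{2N(\lambda)}$. Henselianness enters to lift/refine roots when passing between fixators and congruence conditions inside $T$. Conversely, if $\cZ_\cO\ne\{1\}$, then $\cZ_\cO\subset\Fix_G(\I)\cap K_I$ shows $\sT_\Fix$ is not Hausdorff, while $\sT$ is, so they must differ.

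\textbf{Part 4 (empty interior).} It suffices to show that no $\cV_{n\lambda}$ is contained in a compact subset of $(G,\sT)$. Each $\cV_{n\lambda}$ contains $U^+_\lambda$, and $U^+_\lambda$ contains root subgroups $x_\beta(\qp^{k_\beta}\cO)$ for infinitely many positive real roots $\beta$ (here the infinite-dimensionality of the Kac-Moody root system is essential). Any compact neighbourhood of $1$ would therefore have to contain all but finitely many of these subgroups, but one can extract a sequence $(x_{\beta_j}(\qp^{k_{\beta_j}}))_j$ with pairwise $\sT$-distance bounded below (e.g.\ by constructing for each pair a finite set $F\subset\I$ separating them via the $\sT_\Fix$-quotient, using Part~3). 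This contradicts sequential compactness.

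The most delicate step throughout is the conjugation-invariance/commutator bookkeeping in Part~1--Part~2: getting the exponent $2N(\lambda)$ to be exactly large enough to absorb all $T$-parts produced by interchanging $U^+$ and $U^-$ factors, while still yielding a Hausdorff topology. Everything else is relatively routine once that factorization is in place.
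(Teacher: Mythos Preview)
Your Part 1 has a genuine gap that propagates into Part 2. You propose to write $U^+_\lambda=\prod_{\alpha\in\Phi_+}x_\alpha(\qp^{k_\alpha}\cO)$ as a product over \emph{real} root subgroups and then use Chevalley-type commutator relations $[x_\alpha(a),x_\beta(b)]\in\prod x_{p\alpha+q\beta}(\ldots)$. In the Kac-Moody setting this fails: the groups $U_{[-\lambda,\lambda]}^{pm+}$ and $U_{[-\lambda,\lambda]}^{nm-}$ are defined as infinite products over \emph{all} of $\Delta_+$ (respectively $\Delta_-$), including imaginary roots, inside the completed groups $\fU^{pma}$, $\fU^{nma}$. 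When $G$ is not reductive one has $U_{[-\lambda,\lambda]}^{--}\subsetneq U_{[-\lambda,\lambda]}^{nm-}$ in general, and there are no usable commutation relations of Chevalley type once imaginary roots are involved. The paper explicitly flags this as the main obstacle and circumvents it by a different mechanism: one first shows (via a simple-reflection argument) that $\cV_\lambda$ can be rewritten with $\Delta_+$ replaced by $w.\Delta_+$ for any $w\in W^v$, then conjugates by a torus element to reduce to studying $U_0^{pm}(w.\Delta_+)\,U_{2\lambda}^{nm}(w.\Delta_-)\,T_{2N(\lambda)}$, and finally uses the congruence morphism $\pi_{2N(\lambda)}$ together with the semidirect product structure $\fB=\fT\ltimes\fU^{pma,w}$ over $\cO/\qp^{2N(\lambda)}\cO$ to force the torus factor arising from an interchange into $T_{2N(\lambda)}$. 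This replaces your inductive commutator bookkeeping, which cannot be carried out here.

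The same issue affects your conjugation-invariance argument in Part 2: for $g=x_\alpha(a)$ you cannot simply ``shift the segment and enlarge $n$'', because conjugation by $x_\alpha(a)$ does not send $U_{[-m\lambda,m\lambda]}^{nm-}$ into any $U_{[-n\lambda,n\lambda]}^{nm-}$. The paper handles this by splitting off the single root $-\alpha$ from $\Delta_-$, using that $r_i.(\Delta_-\setminus\{-\alpha_i\})=\Delta_-\setminus\{-\alpha_i\}$, and invoking a separate lemma (of the type ``for $b$ far enough into $-C^v_f$, $g^{-1}U_b^{pm+}g\subset U_a^{pm+}$'') rather than any commutator formula. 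Your Part 3 outline is close to the paper's; your Part 4 argument via sequential compactness is vaguer than the paper's, which directly shows $|\cV_{n\lambda}/\cV_{(n+1)\lambda}|=\infty$ by exhibiting infinitely many distinct images of $(n+1)\lambda$ under $\cV_{n\lambda}$.
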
 

Note that $\sT$ and $\sT_\Fix$ induce the same topologies on $U^+$ and $U^-$. The main difference comes from what happens in $T$. As the elements of $I(\tau)_{\sT}$ and $I(\tau)_{\sT_\Fix}$ are left $\fT(\cO)$-invariant, these two spaces are actually equal (see Remark~\ref{r_comparison_T_TFix}). 

\medskip

In \cite{hartnick2013topological}, based on works of Kac and Peterson on the topology of  $\fG(\C)$, Hartnick, Köhl and Mars defined a Kac-Peterson topology on $\fG(\cF)$, for any local field $\cF$ (Archimedean or not). Assume that $\cK$ is local and let $\sT_{KP}$ be the Kac-Peterson topology on $G$. We prove that when $G$ is not reductive, then $\sT$ is strictly coarser than $\sT_{KP}$ (see Proposition~\ref{p_T_coarser_TKP}) and thus $\sT$ seems more adapted for our purpose. 

\medskip Assume that $\fG$ is affine $\mathrm{SL}_2$ (with a nonfree set of simple coroots).  Then  $G=\mathrm{SL}_2(\cK[u,u^{-1}])\rtimes \cK^*$. Up to the assumption that $\ker \pi_n\subset \begin{psmallmatrix} 1+\qp^n \cO[u,u^{-1}] & \qp^n \cO[u,u^{-1}]\\
\qp^n \cO[u,u^{-1}] & 1+\qp^n \cO[u,u^{-1}]\end{psmallmatrix}\rtimes (1+\qp^n\cO)$, for $n\in \N^*$, we prove that the topology $\sT$ on $G$ is associated to the  filtration $(H_n)_{n\in \N^*}$, where $H_n=\ker(\pi_n)\cap \left(\begin{psmallmatrix} \cO[(\qp u)^n,(\qp u^{-1})^n] &  \cO[(\qp u)^n,(\qp u^{-1})^n]\\ 
 \cO[(\qp u)^n,(\qp u^{-1})^n]	&  \cO[(\qp u)^n,(\qp u^{-1})^n] \end{psmallmatrix},\cK^*\right)$.
 
\medskip

 The paper is organized as follows. In section~\ref{s_KM_groups_Masures}, we define Kac-Moody groups (as defined by Tits, Mathieu and Marquis) and  the masures. 
 
 In section~\ref{s_Congruence_subgroups}, we define and study the subgroups $\ker \pi_n$ of $\fG^{\min}(\cO)$.
 
 In section~\ref{s_Def_topologies}, we define the topologies $\sT$ and $\sT_\Fix$, and compare them.
 
 In section~\ref{s_Prop_topologies}, we study the properties of $\sT$ and $\sT_\Fix$: we prove that $\sT_{KP}$ is strictly finer than $\sT$, we describe the topology in the case of affine $\mathrm{SL}_2$, and we prove that usual subgroups of $G$ (i.e $T$, $N$, $B$, etc.) are closed for $\sT$.
 
 \medskip
 
 \textbf{Acknowledgements} I would like to thank Nicole Bardy-Panse and Guy Rousseau for the very helpful discussions we had on the subject. I  also thank Stéphane Gaussent,   Timothée Marquis and Dinakar Muthiah  for useful conversations and suggestions and the anonymous referee for his/her careful reading and useful comments.

\section{Kac-Moody groups and masures}\label{s_KM_groups_Masures}

In this section, we define Kac-Moody groups and masures. Let $\cK$ be a field.  There are several possible definitions of Kac-Moody groups and we are interested in the minimal one $\fG(\cK)$, as defined by Tits in \cite{tits1987uniqueness}. However, because of the lack of commutation relations in $\fG(\cK)$, it is convenient to embed it in its Mathieu's positive and negative completions $\fG^{pma}(\cK)$ and $\fG^{nma}(\cK)$. Then one define certain subgroups of $\fG(\cK)$ as the intersection of a subgroup of $\fG^{pma}(\cK)$ and $\fG(\cK)$. For example if $\fG$ is affine $\mathrm{SL}_2$ (with a nonfree set of simple roots and coroots), then $\fG(\cK)=\mathrm{SL}_2(\cK[u,u^{-1}])$, $\fG^{pma}(\cK)=\mathrm{SL}_2(\cK\left(\!(u)\!)\right)$ and $\fG^{nma}(\cK)=\mathrm{SL}_2\left(\cK(\!(u^{-1})\!)\right)$. 

As we want to define congruence subgroups in our framework, we also need to work with Kac-Moody groups over rings: if $\cK$ is equipped with a valuation $\omega$ and $\qp$ is such that $\omega(\qp)=1$, then we want to define $\ker \pi_n\subset \fG(\cO)$, where $\pi_n:\fG(\cO)\rightarrow \fG(\cO/\qp^n \cO)$ is the natural projection. The functor defined by Tits in \cite{tits1987uniqueness} goes from the category of rings to the category of groups.  However the fact that it satisfies the axioms defined by Tits is proved only for  fields (see \cite[3.9 Theorem 1]{tits1987uniqueness}) and we do not know if it is ``well-behaved'' on rings, so we will consider it only as a functor from the category of fields to the category of groups. In \cite[8.8]{marquis2018introduction}, Marquis introduces a functor $\fG^{\min}$ which goes from the category of rings to the category of groups and he proves that it has nice properties (see \cite[Proposition 8.128]{marquis2018introduction}), especially on Bézout domains. We will use its functor $\fG^{\min}$. We have  $\fG^{\min}(\cF)\simeq \fG(\cF)$  for any field $\cF$. This functor is defined as a subfunctor of $\fG^{pma}$, so we first define Tits's functor, then Mathieu's functors and then Marquis's functor.

\subsection{Standard apartment of a masure}\label{subRootGenSyst}

\subsubsection{Root generating system}\label{sub_Root_generating_system}

A \textbf{ Kac-Moody matrix} (or generalized Cartan matrix) is a square matrix $A=(a_{i,j})_{i,j\in I}$ indexed by a finite set $I$, with integral coefficients, and such that :
\begin{enumerate}
\item[\tt $(i)$] $\forall \ i\in I,\ a_{i,i}=2$;

\item[\tt $(ii)$] $\forall \ (i,j)\in I^2, (i \neq j) \Rightarrow (a_{i,j}\leq 0)$;

\item[\tt $(iii)$] $\forall \ (i,j)\in I^2,\ (a_{i,j}=0) \Leftrightarrow (a_{j,i}=0$).
\end{enumerate}
A \textbf{root generating system} is a $5$-tuple $\mathcal{S}=(A,X,Y,(\alpha_i)_{i\in I},(\alpha_i^\vee)_{i\in I})$\index{S@$\mathcal{S}$}\index{Y@$Y$} made of a Kac-Moody matrix $A$ indexed by the finite set $I$, of two dual free $\Z$-modules $X$ and $Y$ of finite rank, and of a family $(\alpha_i)_{i\in I}$ (respectively $(\alpha_i^\vee)_{i\in I}$) of elements in $X$ (resp. $Y$) called \textbf{simple roots} (resp. \textbf{simple coroots}) that satisfy $a_{i,j}=\alpha_j(\alpha_i^\vee)$ for all $i,j$ in $I$. Elements of $X$ (respectively of $Y$) are called \textbf{characters} (resp. \textbf{cocharacters}).

Fix such a root generating system $\mathcal{S}=(A,X,Y,(\alpha_i)_{i\in I},(\alpha_i^\vee)_{i\in I})$ and set $\A:=Y\otimes \R$\index{A@$\A$}. Each element of $X$ induces a linear form on $\A$, hence $X$ can be seen as a subset of the dual $\A^*$. In particular, the $\alpha_{i}$'s (with $i \in I$) will be seen as linear forms on $\A$. This allows us to define, for any $i \in I$, a \textbf{simple reflection} $r_{i}$ of $\A$ by setting $r_{i}.v := v-\alpha_i(v)\alpha_i^\vee$ for any $v \in \A$.   One defines the \textbf{Weyl group of $\mathcal{S}$} as the subgroup $W^{v}$\index{W@$W^v$} of $\mathrm{GL}(\A)$ generated by $\{r_i\mid i\in I\}$. The pair $(W^{v}, \{r_i\mid i\in I\})$ is a Coxeter system, hence we can consider the length $\ell(w)$ with respect to $\{r_i\mid i\in I\}$ of any element $w$ of $W^{v}$.

The following formula defines an action of the Weyl group $W^{v}$ on $\A^{*}$:  
\[\displaystyle \forall \ x \in \A , w \in W^{v} , \alpha \in \A^{*} , \ (w.\alpha)(x):= \alpha(w^{-1}.x).\]
Let $\Phi:= \{w.\alpha_i\mid (w,i)\in W^{v}\times I\}$\index{P@$\Phi,\Phi^\vee$} (resp. $\Phi^\vee=\{w.\alpha_i^\vee\mid (w,i)\in W^{v}\times I\}$) be the set of \textbf{real roots} (resp. \textbf{real coroots}): then $\Phi$ (resp. $\Phi^\vee$) is a subset of the \textbf{root lattice} $Q := \displaystyle \bigoplus_{i\in I}\Z\alpha_i$ (resp. \textbf{coroot lattice} $Q^\vee=\bigoplus_{i\in I}\Z\alpha_i^\vee$). By \cite[1.2.2 (2)]{kumar2002kac}, one has $\R \alpha^\vee\cap \Phi^\vee=\{\pm \alpha^\vee\}$ and $\R \alpha\cap \Phi=\{\pm \alpha\}$ for all $\alpha^\vee\in \Phi^\vee$ and $\alpha\in \Phi$.

We define the \textbf{height} $\htt:Q\rightarrow \Z$\index{h@$\htt$} by $\htt(\sum_{i\in I} n_i\alpha_i)=\sum_{i\in I} n_i$, for $(n_i)\in \Z^I$. 

\subsubsection{Vectorial apartment}

As in the reductive case, define the \textbf{fundamental chamber} as $C_{f}^{v}:= \{v\in \A \ \vert \ \forall i \in I,  \alpha_i(v)>0\}$\index{C@$C^v_f$}. 

 Let $\mathcal{T}:= \displaystyle \bigcup_{w\in W^{v}} w.\overline{C^{v}_{f}}$\index{T@$\T$} be the \textbf{Tits cone}. This is a convex cone (see \cite[1.4]{kumar2002kac}).
 
 For $J\subset I$, set $F^v(J)=\{x\in \A\mid  \alpha_j(x)=0,\ \forall j\in J\text{ and }\alpha_j(x)>0,\ \forall j\in I\setminus J\}$\index{F@$F^v(J)$}. A \textbf{positive vectorial face} (resp. \textbf{negative}) is a set of the form $w.F^v(J)$ ($-w.F^v(J)$) for some $w\in W^v$ and $J\subset I$. Then by \cite[5.1 Th\'eor\`eme (ii)]{remy2002groupes}, the family of positive vectorial faces of $\A$ is a partition of $\T$ and the stabilizer of $F^v(J)$ is $W_J=\langle J\rangle$. 

One sets $Y^{++}=Y\cap\overline{C^v_f}$\index{Y@$Y^+$, $Y^{++}$} and $Y^+=Y\cap \T$. An element of $Y^+$ is called \textbf{regular} if it does not belong to any wall, i.e if it belongs to $\bigsqcup_{w\in W^v} w.C^v_f$. 

\begin{Remark}
By \cite[§4.9]{kac1994infinite} and \cite[§ 5.8]{kac1994infinite} the following conditions are equivalent:\begin{enumerate}
\item the Kac-Moody matrix $A$ is of finite type (i.e. is a Cartan matrix),

\item $\A=\T$

\item $W^v$ is finite.
\end{enumerate}
\end{Remark}

\subsection{Split Kac-Moody groups over fields}

\subsubsection{Minimal  Kac-Moody groups over fields}\label{ss_m_KM_grp}

 Let  $\mathfrak{G}=\fG_{\mathcal{S}}$\index{G@$\mathfrak{G}$} be the group functor associated in \cite{tits1987uniqueness} with  the  root generating system $\mathcal{S}$, see also \cite[8]{remy2002groupes}.
 Let $\cK$\index{K@$\cK$}\index{o@$\omega$} be a  field. Let $G=\mathfrak{G}(\cK)$ be the \textbf{split Kac-Moody group over $\cK$ associated with $\mathcal{S}$}.  The group $G$ is generated by the following subgroups:\begin{itemize}
\item the fundamental torus $T=\mathfrak{T}(\cK)$\index{T@$T$}, where $\mathfrak{T}=\mathrm{Spec}(\Z[X])$,

\item the root subgroups $U_\alpha=\mathfrak{U}_\alpha(\cK)$\index{U@$U_\alpha$}, for $\alpha\in \Phi$, each isomorphic to $(\cK,+)$ by an isomorphism $x_\alpha$\index{x@$x_\alpha$}.
\end{itemize}

The groups $X$ and $Y$ correspond to the character lattice $\Hom (\fT,\mathbb{G}_m)$  and cocharacter lattice $\Hom(\mathbb{G}_m,\fT)$ of $\mathfrak{T}$ respectively. One writes $\mathfrak{U}^{\pm}$\index{U@$\mathfrak{U}^{\pm},U^{\pm}$} the subgroup of $\mathfrak{G}$ generated by the $\mathfrak{U}_{\alpha}$, for $\alpha\in \Phi^{\pm}$ and $U^{\pm}=\mathfrak{U}^{\pm}(\cK)$.    

By a simple computation in $\mathrm{SL}_2$, we have for  $\alpha\in \Phi$ and  $a,b\in \cK$  such that $ab\neq -1$:
\begin{equation}\label{e_Commutation_relation}
\begin{aligned} x_{-\alpha}(b)x_\alpha(a)&=x_\alpha(a(1+ab)^{-1})\alpha^\vee(1+ab)x_{-\alpha}(b(1+ab)^{-1})\\
&=x_\alpha(a(1+ab)^{-1})x_{-\alpha}(b(1+ab))\alpha^\vee(1+ab), \end{aligned}
\end{equation}

where $\alpha^\vee=w.\alpha_i^\vee$ if $\alpha=w.\alpha_i$, for $i\in I$ and $w\in W^v$.

Let $\mathfrak{N}$\index{N@$\mathfrak{N}$} be the group functor on rings such that if $\sR'$ is a ring, $\mathfrak{N}(\sR')$ is the subgroup of $\mathfrak{G}(\sR')$ generated by $\mathfrak{T}(\sR')$ and the $\tilde{r}_{i}$, for $i\in I$, where \begin{equation}\label{e_tilde_s_alpha}
\tilde{r}_{i}=x_{\alpha_i}(1) x_{-\alpha_i}(-1)x_{\alpha_i}(1).
\end{equation} Then if $\sR'$ is a field with at least $4$ elements, $\mathfrak{N}(\sR')$ is the normalizer of $\mathfrak{T}(\sR')$ in $\mathfrak{G}(\sR')$. 

Let $N=\mathfrak{N}(\cK)$\index{N@$\mathfrak{N}$} and $\mathrm{Aut}(\A)$ be the group of affine automorphisms of $\A$. Then by \cite[1.4 Lemme]{rousseau2006immeubles}, there exists a group morphism $\nu^v:N\rightarrow \mathrm{GL}(\A)$\index{n@$\nu^v$} such that:\begin{enumerate}
\item for $i\in I$, $\nu^v(\tilde{r}_{i})$ is the simple reflection $r_i\in W^v$,

\item $\ker \nu^v=T$.

\end{enumerate}

The aim of the next two subsubsections is to define Mathieu's Kac-Moody group. This group is defined by assembling three ingredients: the group $\fU^{pma}$, which corresponds to a maximal positive unipotent subgroup of $\fG^{pma}$, the torus $\fT$ and copies  of $\mathrm{SL}_2$, one for  each simple root $\alpha_i$, $i\in I$.

\subsubsection{The affine group scheme $\fU^{pma}$}

In this subsubsection, we define $\fU^{pma}$. Let $\g$\index{g@$\g$} be the Kac-Moody Lie algebra over $\C$ associated with $\cS$ (see \cite[1.2]{kumar2002kac}) and $\cU_\C(\g)$\index{U@$\cU_\C(\g)$} be its enveloping algebra. The group $\fU^{pma}(\C)$,  will be defined as a subgroup of a completion of $\cU_\C(\g)$. As we want to define $\fU^{pma}(\sR)$, for any ring $\sR$,  we will also consider $\Z$-forms of $\g$ and $\cU_\C(\g)$.

 The Lie algebra $\g$ decomposes as $\g=\bigoplus_{\alpha\in \Delta} \g_\alpha$, where $\Delta\subset Q$ is the \textbf{set of roots} and $\g_\alpha$ is the proper space associated with $\alpha$, for $\alpha\in \Delta$ (see \cite[1.2]{kumar2002kac}). We have $\Delta=\Delta_+\sqcup \Delta_-$, where $\Delta_+=\Delta\cap Q_+$ and $\Delta_-=-\Delta_+$. We have $\Phi\subset \Delta$. The elements of $\Phi=\Delta_{re}$ are called \textbf{real roots }and the elements of $\Delta_{im}= \Delta\setminus \Phi$\index{D@$\Delta$, $\Delta_{re}$, $\Delta_{im}$} are called \textbf{imaginary roots}.
 
  Following \cite[4]{tits1987uniqueness} one defines $\cU$\index{U@$\cU$} as the $\Z$-subalgebra of $\cU_\C(\g)$ generated by  $e_i^{(n)}:=\frac{e_i^n}{n!},f_i^{(n)}:=\frac{f_i^n}{n!}$, $\binom{h}{n}$, for  $i\in I$ and $h\in Y$ (where the $e_i, f_i$ are the generators of $\g$, see \cite[1.1]{kumar2002kac}). This is a $\Z$-form of $\cU_\C(\g)$. The algebra $\cU_\C(\g)$  decomposes as $\cU_\C(\g)=\bigoplus_{\alpha\in Q}\cU_\C(\g)_\alpha$ where we use the standard $Q$-graduation on $\cU_\C(\g)$ induced by the $Q$-graduation of $\g$ (for $i\in I$,  $\deg(e_i)=\alpha_i$, $\deg(f_i)=-\alpha_i$, $\deg(h)=0$, for $h\in Y$, $\deg(xy)=\deg(x)+\deg(y)$ for all $x,y\in \cU_\C(\g)$ which can be written as a product of nonzero elements of $\g$).  For $\alpha\in Q$, one sets $\cU_\alpha=\cU_\C(\g)_\alpha\cap \cU$\index{U@$\cU_{\alpha}$} and $\cU_{\alpha,\sR}=\cU_\alpha\otimes \sR$\index{U@$\cU_{\alpha,\sR}$}.

  For a ring $\sR$, we set $\cU_\sR=\cU\otimes_\Z \sR$\index{U@$\cU_{\sR}$}. One sets $\widehat{\cU}^+=\prod_{\alpha\in Q_+} \cU_\alpha$\index{U@$\widehat{\cU}$} and  $\widehat{\cU}^+_\sR=\prod_{\alpha\in Q_+} \cU_{\alpha,\sR}$\index{U@$\widehat{\cU}_\sR$}. This is the completion of $\cU^+$ with respect to the $Q_+$-gradation. 

If $(u_\alpha)\in \prod_{\alpha\in Q_+} \cU_{\alpha,\sR}$, we write $\sum_{\alpha\in Q_+} u_\alpha$ the corresponding element of $\widehat{\cU}^+_{\sR}$. A sequence $(\sum_{\alpha\in Q_+} u_\alpha^{(n)})_{n\in \N}$ converges in $\widehat{\cU}_\sR$ if and only if for every $\alpha\in \Delta_+$, the sequence $(u_\alpha^{(n)})_{n\in \N}$ is stationary.

Let $(E,\leq)$ be a totally ordered set. Let $(u^{(e)})\in  (\widehat{\cU}_{\sR})^E$. For $e\in E$, write  $u=\sum_{\alpha\in Q_+} u_\alpha^{(e)}$, with $u_\alpha^{(e)}\in \cU_{\alpha,\sR}$, for $\alpha\in Q_+$. We assume that for every $\alpha\in Q_+$, $\{e\in E\mid u_\alpha^{(e)}\neq 0\}$ is finite. Then one sets $\prod_{e\in E} u^{(e)}=\sum_{\alpha\in Q_+} u_\alpha$,  where \[u_\alpha=\sum_{\stackrel{(\beta_1,\ldots,\beta_k)\in Q_+^{(\N)},}{\beta_1+\ldots+\beta_k=\alpha}}\sum_{\stackrel{(e_1,\ldots,e_k)\in E,}{e_1<\ldots <e_k}} u_{\beta_1}^{(e_1)}\ldots u_{\beta_k}^{(e_k)}\in  \cU_{\alpha,\sR},\]  for $\alpha\in \Delta_+.$ This is well-defined since in the sum defining $u_\alpha$, only finitely many nonzero terms appear.

Let $\cA=\bigoplus_{\alpha\in Q_+} \cU_\alpha^*$\index{A@$\cA$}, where $\cU_\alpha^*$ denotes the dual of $\cU_\alpha$ (as a $\Z$-module). We have a natural $\sR$-modules isomorphism  between $\widehat{\cU}_{\sR}^+$ and $\Hom_{\Z-\mathrm{lin}}(\cA,\sR)$, for any ring $\sR$ (see \cite[(8.26)]{marquis2018introduction}) and we now identify these two spaces. The algebra $\cA$ is equipped with a Hopf algebra structure (see \cite[Definition 8.42]{marquis2018introduction}). This additional structure equips \[\fU^{pma}(\sR):= \Hom_{\Z-\mathrm{Alg}}(\cA,\sR)\] with the structure of a group (see \cite[Appendix A.2.2]{marquis2018introduction}). Otherwise said, $\cA$ is the representing algebra of the (infinite dimensional in general) affine group scheme $\fU^{pma}:\Z$-$\mathrm{Alg}\rightarrow \mathrm{Grp}$.

Let $\alpha\in \Delta\cup \{0\}$ and $x\in \g_{\alpha,\Z}$. An \textbf{exponential sequence} for $x$ is a sequence $(x^{[n]})_{n\in \N}$ of elements of $\cU$ such that $x^{[0]}=1$, $x^{[1]}=x$ and $x^{[n]}\in \cU_{n\alpha}$ for $n\in \Z_{\geq 1}$ and satisfying the conditions of \cite[Definition 8.45]{marquis2018introduction}. By \cite[Proposition 2.7]{rousseau2016groupes} or \cite[Proposition 8.50]{marquis2018introduction}, such a sequence exists. Note that it is not unique in general. However, if $\alpha\in \Phi_+$, the unique exponential sequence for $x$ is $(x^{[n]})_{n\in \N}=(\frac{1}{n!}x^n)$ by \cite[2.9 2)]{rousseau2016groupes} (this sequence is often denoted $(x^{(n)})_{n\in \N}$ in the literature).

For $r\in \sR$, one then sets \[[\exp](rx)=:\sum_{n\in \N} x^{[n]}\otimes r^n\in \widehat{\cU}_\sR^+.\]\index{e@$[\exp]$} This is the \textbf{twisted exponential} of $rx$ associated with the sequence $(x^{[n]})_{n\in \N}$. 

We fix for every $\alpha\in \Delta_+$ a $\Z$-basis $\cB_\alpha$ of $\g_{\alpha,\Z}:=\g_\alpha\cap \cU$\index{g@$\g_{\alpha,\Z}$}.  Set $\cB=\bigcup_{\alpha\in \Delta_+}\cB_\alpha$\index{B@$\cB$}. We fix an order on each  $\cB_\alpha$ and on $\Delta_+$.  
Let $\alpha\in \Delta_+$. One defines $X_\alpha:\g_{\alpha,\Z}\otimes \sR\rightarrow \fU^{pma}(\sR)$\index{X@$X_\alpha$} by $X_\alpha(\sum_{x\in \cB_\alpha} \lambda_x.x)=\prod_{x\in \cB_\alpha} [\exp]\lambda_x.x$, for $(\lambda_x)\in \sR^{\cB_\alpha}$.  When $\alpha\in \Phi_+$, we have $\g_{\alpha,\Z}=\Z e_\alpha$, where $e_\alpha$ is defined in \cite[Remark 7.6]{marquis2018introduction}. One sets $x_\alpha(r)=[\exp](r e_\alpha)$, for $r\in \sR$. One has $X_\alpha(\g_{\alpha,\Z}\otimes \sR)=x_\alpha(\sR):=\fU_{\alpha}(\sR)$.   By \cite[Theorem 8.5.1]{marquis2018introduction}, every $g\in \fU^{pma}(\sR)$ can be written in a unique way as a   product \begin{equation}\label{e_normal_form_Upma}
g=\prod_{\alpha\in \Delta_+}X_\alpha(c_\alpha),
\end{equation}  where $c_\alpha\in \g_{\alpha,\Z}\otimes \sR$, for $\alpha\in \Delta_+$, where the product is taken in the given order  on $\Delta_+$.

 Let  $\Psi\subset \Delta_+$. We say that $\Psi$ is \textbf{closed} if for all $\alpha,\beta\in \Psi$, for all $p,q\in \N^*$, $p\alpha+q\beta\in \Delta_+$ implies $p\alpha+q\beta\in \Psi$. Let  $\Psi\subset \Delta_+$ be a closed subset. One sets \[\fU_{\Psi}^{\mathrm{pma}}(\sR)=\prod_{\alpha\in \Psi} X_\alpha(\g_{\alpha,\Z}\otimes \sR)\subset \fU^{pma}(\sR).\]\index{u@$\fU_{\Psi}^{\mathrm{pma}}$, $\fU_{\Psi}^{\mathrm{pma}}(\sR)$}

This is a subgroup of $\fU^{pma}$, which does not depend on the chosen order on $\Delta_+$ (for the product). This is not the definition given in \cite{rousseau2016groupes} or \cite[page 210]{marquis2018introduction}, but it is equivalent by \cite[Theorem 8.51]{marquis2018introduction}.

\subsubsection{Mathieu's group $\fG^{pma}$}

The \textbf{Borel subgroup} (it will be a subgroup of $\fG^{pma}$) is $\fB=\fB_{\cS}=\fT_{\cS}\ltimes \fU^{pma}$\index{B@$\fB$}, where $\fT$ acts on $\fU^{pma}$ as follows.  Let  $\sR$ be a ring, $\alpha\in \Delta_+$, $t\in \fT(\sR)$, $r\in \sR$ and $x\in \g_{\alpha,\sR}$,  \begin{equation}\label{e_KMT4}
t[\exp](rx)t^{-1}=[\exp](\alpha(t)rx).
\end{equation}
In particular, if $\alpha\in \Phi$, we have   \[\  tx_\alpha(r)t^{-1}=x_\alpha(\alpha(t)r).\]

For $i\in I$, let $\fU_{\alpha_i}^Y$ be the reductive group associated with the root generating system $\left((2),X,Y,\alpha_i,\alpha_i^\vee\right)$. For each $i\in I$, Mathieu defines an (infinite dimensional) affine group scheme $\fP_{i}=\fU_{\alpha_i}^Y \ltimes \fU^{ma}_{\Delta_+\setminus \{\alpha_i\}}$ (see \cite[Definition 8.65]{marquis2018introduction} for the definition of the action of $\fU_{-\alpha_i}$ on $\fU^{ma}_{\Delta_+\setminus \{\alpha_i\}}$), where $\fU_{-\alpha_i}$ is an affine group scheme over $\Z$ isomorphic to $\mathbb{G}_a$ (see \cite[7.4.3]{marquis2018introduction} for more details).

 We do not detail the definition of $\fG^{pma}$\index{G@$\fG^{pma}$} and we refer to \cite{mathieu1989construction}, \cite[8.7]{marquis2018introduction} or \cite[3.6]{rousseau2016groupes}. This is an ind-group scheme containing the $\fP_{i}$ for every $i\in I$. Let $w\in W^v$ and write $w=r_{i_1}\ldots r_{i_k}$, with $k=\ell(w)$ and $i_1,\ldots,i_k\in I$. Then the multiplication map $\fP_{i_1}\times \ldots  \times \fP_{i_k}\rightarrow \fG^{pma}$ is  a  scheme morphism, and we have $\fG^{pma}(\sR)=\bigcup_{(i_1,\ldots,i_n)\in \Red(W^v)} \fP_{i_1}(\sR)\times \ldots \times \fP_{i_k}(\sR)$, where $\Red(W^v)$ is the set of reduced words of $W^v$ (i.e $\Red(W^v)=\{(i_1,\ldots,i_k)\in I^{(\N)}\mid \ell(r_{i_1}\ldots r_{i_k})=k\}$). 

Let $w\in W^v$, $i\in I$ and $\alpha=w.\alpha_i$. One sets $\fU_{\alpha}=\tilde{w}.\fU_{\alpha_i} .\tilde{w}^{-1}$, where \begin{equation}\label{e_tilde_w}
\tilde{w}=\tilde{r}_{i_1}\ldots \tilde{r}_{i_k},
\end{equation} if $w=r_{i_1}\ldots r_{i_k}$ is a reduced decomposition of $w$. There is an isomorphism of group schemes $x_\alpha: \mathbb{G}_a\rightarrow \fU_{\alpha}$ (see \cite[page 262]{marquis2018introduction}). The group $\fG^{pma}$ is generated by the $\fP_i$, $i\in I$. Moreover, if $i\in I$, then $\fP_i$ is generated by $\fT$, $\fU_{\pm \alpha_i}$ and $\tilde{r}_{i}=x_{\alpha_i}(1)x_{-\alpha_i}(1)x_{\alpha_i}(1)$. Thus $\fG^{pma}$ is generated by $\fU^{pma}$, $\fT$, $\fU_{-\alpha_i}$ and the $\tilde{r}_{i}$, for $i\in I$  and thus we have: \begin{equation}\label{e_generators_Gpma}
\fG^{pma}=\langle \fU^{pma},\fT,\fU_\alpha,\alpha\in \Phi_-\rangle.
\end{equation}

There is a group functor morphism $\iota:\fG\rightarrow \fG^{pma}$ such that for any ring $\sR$, $\iota_\sR$ maps $x_\alpha(r)$ to $x_\alpha(r)$ and $t$ to $t$, for each $\alpha\in \Phi$, $r\in \sR$, $t\in \fT(\sR)$. When $\sR$ is a field, this morphism is injective (see \cite[3.12]{rousseau2016groupes} or \cite[Proposition 8.117]{marquis2018introduction}).

\begin{Proposition}\label{p_explicit_morphism}
Let $\sR$ and $\sR'$ be two rings and $\varphi:\sR\rightarrow \sR'$ be a ring morphism. 

 Let $f_\varphi^{\widehat{\cU}^+}:\widehat{\cU}_\sR^+\rightarrow \widehat{\cU}_{\sR'}^+$ and $f_\varphi:\fG^{pma}(\sR)\rightarrow \fG^{pma}(\sR')$ be the induced morphisms. Then $f_\varphi^{\widehat{\cU}^+}(\fU^{pma}(\sR))\subset \fU^{pma}(\sR')$ and we have: \begin{enumerate}
\item For every $(r_x)\in \sR^{\cB}$, $f_\varphi^{\widehat{\cU}^+}\left(\prod_{x\in \cB}[\exp](r_x x)\right)=\prod_{x\in \cB}[\exp](\varphi(r_x) x)$.

\item For $\alpha\in \Delta_+$ and $(\lambda_x)\in \sR^{\cB_\alpha}$, we have $f_\varphi\left(X_\alpha(\sum_{x\in \cB_\alpha} \lambda_x x)\right)=X_\alpha\left(\sum_{x\in \cB_\alpha}\varphi(\lambda_x) x\right)$. 

\item We have $f_{\varphi}(u)=f_\varphi^{\widehat{\cU}^+}(u)$ for $u\in \fU^{pma}(\sR)$, $f_{\varphi}(x_{\alpha}(r))=x_\alpha(\varphi(r))$, for $\alpha\in \Phi$ and $r\in \sR$, and $f_\varphi(\chi(r))=\chi(\varphi(r))$, for $\chi\in Y$ and $r\in \sR^\times$. 

\item If $\varphi$ is surjective, then $f_\varphi$ is surjective.
\end{enumerate}
\end{Proposition}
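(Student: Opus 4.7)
The plan is to derive every assertion from the functoriality of the ingredients building $\fG^{pma}$: the graded algebra $\cU$ and its completion $\widehat{\cU}^+$, the twisted exponentials, the normal form on $\fU^{pma}$, and the ind-group scheme structure on $\fG^{pma}$. Since $\widehat{\cU}^+_\sR=\Hom_{\Z\text{-lin}}(\cA,\sR)$ and $\fU^{pma}(\sR)=\Hom_{\Z\text{-Alg}}(\cA,\sR)$, postcomposition with $\varphi$ sends $\Z$-linear (resp.\ $\Z$-algebra) morphisms to morphisms of the same type, which gives $f_\varphi^{\widehat{\cU}^+}$ together with the inclusion $f_\varphi^{\widehat{\cU}^+}(\fU^{pma}(\sR))\subset \fU^{pma}(\sR')$. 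Concretely, on each graded piece $\cU_{\alpha,\sR}=\cU_\alpha\otimes \sR$ the map $f_\varphi^{\widehat{\cU}^+}$ acts as $\mathrm{id}\otimes \varphi$.

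For items (1) and (2) I would unfold the definition $[\exp](rx)=\sum_{n\geq 0}x^{[n]}\otimes r^n$, noting that $x^{[n]}\in \cU_{n\alpha}$ is independent of the ring. On each graded piece, $f_\varphi^{\widehat{\cU}^+}$ sends $x^{[n]}\otimes r^n$ to $x^{[n]}\otimes \varphi(r)^n$, which is precisely $[\exp](\varphi(r)x)$. The ordered infinite product $\prod_{x\in\cB}[\exp](r_xx)$ converges because its coefficient on each graded component $\cU_{\alpha,\sR}$ is a finite sum of products $u^{(e_1)}_{\beta_1}\cdots u^{(e_k)}_{\beta_k}$; since $f_\varphi^{\widehat{\cU}^+}$ is a graded ring morphism, it commutes with these finite sums and products, giving (1). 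Item (2) is then immediate from the definition $X_\alpha(\sum_x \lambda_x x)=\prod_x [\exp](\lambda_x x)$ specialized to $\cB_\alpha$.

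For item (3), the identity $f_\varphi(u)=f_\varphi^{\widehat{\cU}^+}(u)$ on $u\in \fU^{pma}(\sR)$ is the definition of $f_\varphi$ on the $\fU^{pma}$ factor as the restriction of the algebra-dual map. For $\alpha\in \Phi_+$, the relation $x_\alpha(r)=X_\alpha(re_\alpha)$ reduces the claim to (2). For $\alpha\in \Phi_-$, one writes $\fU_\alpha=\tilde{w}\fU_{\alpha_i}\tilde{w}^{-1}$ where $\tilde{w}$ is the lift of (2.7): the generators $\tilde{r}_{i}=x_{\alpha_i}(1)x_{-\alpha_i}(-1)x_{\alpha_i}(1)$ involve only $\pm 1\in \Z$ and are therefore preserved by $f_\varphi$, while the scheme isomorphism $x_\alpha:\mathbb{G}_a\to\fU_\alpha$ is defined over $\Z$, so its $\sR$-points transform naturally under $\varphi$. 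The identity $f_\varphi(\chi(r))=\chi(\varphi(r))$ reflects the functoriality of $\fT=\Spec(\Z[X])$: the cocharacter $\chi$ is the algebra morphism $\Z[X]\to \sR$ sending $\lambda\in X$ to $r^{\lambda(\chi)}$, and postcomposing with $\varphi$ yields $\chi(\varphi(r))$.

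For item (4), the plan is to use the generating set (2.6): $\fG^{pma}$ is generated as a group functor by $\fU^{pma}$, $\fT$, the groups $\fU_{-\alpha_i}$, and the $\Z$-defined lifts $\tilde{r}_i$. Surjectivity of $f_\varphi$ on $\fU^{pma}$ follows from the normal form (2.5): any $g'\in \fU^{pma}(\sR')$ writes $\prod_{\alpha\in \Delta_+}X_\alpha(c'_\alpha)$ with $c'_\alpha\in \g_{\alpha,\Z}\otimes \sR'$, and each $c'_\alpha$ is a finite $\sR'$-linear combination of basis elements of $\g_{\alpha,\Z}$ whose coefficients lift to $\sR$ via the surjective $\varphi$; the product of the lifts, taken in the same order, maps to $g'$ by (2). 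The same argument handles each $\fU_{-\alpha_i}\simeq \mathbb{G}_a$. The main obstacle will be the torus: $\sR^\times\to (\sR')^\times$ need not be surjective, so $\fT(\sR)\to \fT(\sR')$ need not be either. To cope with this, I would not try to lift torus elements directly but rather use the commutation identity (\ref{e_Commutation_relation}): it expresses $\alpha^\vee(1+ab)$ as a product of four elements whose root-group arguments lie in $\sR'$ and therefore lift via $\varphi$, and then combine such factors with the integral lifts $\tilde{r}_i$ to recover, up to a $\fU^{pma}$-$\fU_{-\alpha_i}$ factor already handled, any target element of $\fG^{pma}(\sR')$.
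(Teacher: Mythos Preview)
Your arguments for (1), (2) and (3) are correct and essentially the same as the paper's: both reduce to the observation that $f_\varphi^{\widehat{\cU}^+}$ acts as $\mathrm{id}\otimes\varphi$ on each graded piece $\cU_{\alpha,\sR}$, hence commutes with the finite sums and products defining each graded component of an infinite product, and both handle negative real roots by conjugation with the $\Z$-defined lifts $\tilde{w}$.

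For (4) you have put your finger on a real issue that the paper's proof glosses over. The paper simply asserts $f_\varphi(\fT(\sR))=\fT(\sR')$ as a consequence of (3); this is exactly the step you object to, since it requires every unit of $\sR'$ to lift to a unit of $\sR$, which fails in general (e.g.\ $\Z\twoheadrightarrow\Z/5\Z$). However, your proposed workaround via the commutation relation \eqref{e_Commutation_relation} does not rescue the argument: that identity only manufactures torus elements of the form $\alpha^\vee(c)$ with $\alpha\in\Phi$, and the coroots $\alpha^\vee$ need not span $Y$. In fact statement (4) appears to be false at the stated level of generality: for $\fG=\mathrm{GL}_2$ and $\varphi:\Z\twoheadrightarrow\Z/5\Z$, the determinant shows $\mathrm{GL}_2(\Z)\to\mathrm{GL}_2(\Z/5\Z)$ is not surjective. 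The paper only ever applies (4) to the projections $\cO\twoheadrightarrow\cO/\qp^n\cO$ with $\cO$ a valuation ring; there an element of $\cO$ maps to a unit of $\cO/\qp^n\cO$ if and only if it lies outside the maximal ideal, i.e.\ is already a unit of $\cO$, so units do lift and the paper's direct torus argument is valid. The cleanest fix is therefore to add the hypothesis that $\varphi$ induces a surjection $\sR^\times\to(\sR')^\times$ (automatic in the local/valuation-ring setting the paper needs) and then argue exactly as the paper does, rather than detouring through \eqref{e_Commutation_relation}.
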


\begin{proof}
(1), (2) By definition, we have \[f_\varphi^{\widehat{\cU}^+}(\sum_{\alpha\in Q^+} \sum_{j\in J_\alpha} u_{\alpha,j}\otimes r_j)=\sum_{\alpha\in Q^+} \sum_{j\in J_\alpha} u_{\alpha,j}\otimes \varphi(r_j)\] if $J_\alpha$ is a finite set and $(r_j)\in \sR^{J_\alpha}$ and $u_{\alpha,j}\in \cU_{\alpha,\sR}$, for every $\alpha\in Q_+$. Thus $\varphi$ commutes with infinite sums and product, which proves (1) and (2). 

(3) Let $i\in I$. Then the morphism $\fP_i(\sR)\rightarrow \fP_i(\sR')$ induced by $\varphi$ satisfies the formula above. Using the fact that $x_\alpha=\tilde{w} x_{-\alpha_i} \tilde{w}^{-1}$, for $\alpha=-w.\alpha_i$, with $w\in W^v$, $i\in I$ and $\tilde{w}$ defined as in \eqref{e_tilde_w}, we have (3).

(4)  Assume $\varphi$ is surjective. By \eqref{e_normal_form_Upma} and (1), the restriction of $f_\varphi$ to $\fU^{pma}(\sR)$ is surjective. By (3), the restriction of  $f_{\varphi}(\fU^-(\sR))=\fU^-(\sR')$ and $f_\varphi(\fT(\sR))=\fT(\sR')$. We conclude by using the fact that $\fG^{pma}$ is generated by $\fU^{pma}$, $\fU^-$ and $\fT$ (see \eqref{e_generators_Gpma}).
\end{proof}

\subsubsection{Minimal Kac-Moody group over rings}

For $i\in I$, there is a natural group morphism $\varphi_i:\mathrm{SL}_2\rightarrow \fU_{\alpha_i}^Y$\index{P@$\varphi_i$}. 

For a ring $\sR$, one sets \[\fG^{\min}(\sR)=\langle \varphi_i\left(\mathrm{SL}_2(\sR)\right),\fT(\sR)\rangle\subset \fG^{pma}(\sR).\]\index{G@$\fG^{\min}$} This group is introduced by Marquis in  \cite[Definition 8.126]{marquis2018introduction}. By \cite[Proposition 8.129]{marquis2018introduction}, it is a nondegenerate Tits functor in the sense of \cite[Definition 7.83]{marquis2018introduction} and we have $\fG^{\min}(\sR)\simeq \fG(\sR)$, for any field $\sR$. 

Note that if $\varphi$ is a ring morphism between two rings $\sR$ and $\sR'$, the induced morphism $\fG^{pma}(\sR)\rightarrow \fG^{pma}(\sR')$ restricts to a morphism $\fG^{\min}(\sR)\rightarrow \fG^{\min}(\sR')$. 

Let $\sR$ be a semilocal ring, i.e a ring with finitely many maximal ideals, then by \cite[4.3.9 Theorem]{hahn1989classical}, $\mathrm{SL}_2(\sR)$ is generated by $\begin{psmallmatrix} 1 & \sR\\ 0 & 1\end{psmallmatrix}$ and  $\begin{psmallmatrix} 1 & 0\\ \sR & 1\end{psmallmatrix}$. Therefore, \begin{equation}\label{e_minimal_group_semilocal_ring}
\fG^{\min}(\sR)=\langle \fU_{\pm\alpha_i}(\sR),\fT(\sR)\mid i\in I\rangle\subset \fG^{pma}(\sR).
\end{equation}

\subsection{Split Kac-Moody groups over valued fields and masures}

We now fix a field $\cK$ equipped with a valuation $\omega:\cK\rightarrow \R\cup\{+\infty\}$\index{o@$\omega$} such that $\Lambda:=\omega(\cK^*)$\index{L@$\Lambda$} contains $\Z$. Let $\cO=\{x\in \cK\mid \omega(x)\geq 0\}$\index{O@$\cO$} be its ring of valuation.  We defined Mathieu's positive completion $\fG^{pma}$. Replacing $\Delta_+$ by $w.\Delta_+$, for $w\in W^v$, one can also define a group $\fG^{pma,w}$\index{G@$\fG^{pma,w}$}. Replacing $\Delta_+$ by $\Delta_-$ or by $w.\Delta_-$, for $w\in W^v$, one can also define $\fG^{nma}$\index{G@$\fG^{nma}$} or $\fG^{nma,w}$.

We set $G=\fG(\cK)$, $G^{pma}=\fG^{pma}(\cK)$ and $G^{nma}=\fG^{nma}(\cK)$.

\subsubsection{Action of $N$ on $\A$}

Let $N=\fN(\cK)$\index{N@$N$} and $\mathrm{Aut}(\A)$ be the group of affine automorphism of $\A$. Then by \cite[4.2]{rousseau2016groupes}, there exists a group morphism $\nu:N\rightarrow \mathrm{Aut}(\A)$\index{n@$\nu$} such that:\begin{enumerate}
\item for $i\in I$, $\nu(\tilde{r}_{i})$ is the simple reflection $r_i\in W^v$, it fixes $0$,

\item for $t\in \fT(\cK)$, $\nu(t)$ is the translation on $\A$ by the vector $\nu(t)$ defined by $\chi(\nu(t))=-\omega(\chi(t))$, for all $\chi\in X$. 

\item we have $\nu(N)=W^v\ltimes (Y\otimes \Lambda):=W_\Lambda$\index{W@$W_\Lambda$}.

\end{enumerate}

\subsubsection{Affine apartment}  %%%%%%%%%%%%%%%%%%%%%
\label{2.2b}

 A \textbf{local face} in $\A$ is the germ $F(x,F^v)=germ_{x}(x+F^v)$ where $x\in\A$ and $F^v$ is a vectorial face (i.e, $F(x,F^v)$ is the filter of all neighbourhoods of $x$ in $x+F^v$).
It is a \textbf{local panel}, positive, or negative if $F^v$ is. If $F^v$ is a chamber, we call $F(x,F^v)$ an \textbf{alcove} (or a local chamber). %, or of type $J$
We denote by $C_0^+$\index{C@$C_0^+$} the  \textbf{fundamental alcove}, i.e, $C_0^+=germ_0(C^v_f)$.

  A \textbf{sector} in $\A$ is a subset $\fq=x+C^v$, for $x$ a point in $\A$ and $C^v$ a vectorial chamber.
Its \textbf{sector germ} (at infinity) is the filter $\fQ=germ_{\infty}(\fq)$ of subsets of $\A$ containing another sector $x+y+C^v$, with $y\in C^v$.
It is entirely determined by its direction $C^v$.
This sector or sector germ is said positive (resp. negative) if $C^v$ has this property. We denote by $\pm\infty$\index{i@$\pm \infty$} the germ at infinity of $\pm C^v_f$.

For $\alpha\in \Delta$ and $k\in \Lambda\cup \{+\infty\}$, we set  $D(\alpha,k)=\{x\in \A\mid \alpha(x)+k\geq 0\}$\index{D@$D(\alpha,k)$}.  A set of the form $D(\alpha,k)$, for $\alpha\in \Delta$ and $k\in \Lambda$ is called a \textbf{half-apartment}.

\subsubsection{Parahoric subgroups}\label{3.1}%%%%%%%%%%%%

In \cite{rousseau2016groupes} and \cite{gaussent2008kac}, the masure $\I$ of $G$ is constructed as follows. To each $x\in \A$ is associated a group $\hat{P}_x=G_x$. Then $\I$ is defined  in such a way that $G_x$ is the fixator of $x$ in $G$ for the action on $\I$.  We actually associate to each filter $\Omega$ on $\A$ a subgroup $G_\Omega\subset G$ (with $G_{\{x\}}=G_x$ for $x\in \A$).  Even though the masure is not yet defined, we use the terminology ``fixator'' to speak of $G_\Omega$, as this will be the fixator of $\Omega$ in $G$. The definition of $G_\Omega$ involves the completed groups $G^{pma}$ and $G^{nma}$.

If $\Omega$ is a non empty subset of $\A$ we sometimes regard it as a filter on $\A$ by identifying it with the filter consisting of the subsets of $\A$ containing $\Omega$.  Let $\Omega\subset\A$ be a non empty set or filter.
One defines a function $f_{\Omega}:\Delta\to \R$ by \[f_{\Omega}(\alpha)=\inf\set{r\in\R \mid \Omega\subset D(\alpha+r)}=\inf\set{r\in\R \mid \alpha(\Omega)+r\subset[0,+\infty[},\]\index{f@$f_\Omega$}  for $\alpha\in \Delta$. For $r\in\R$, one sets $\cK_{\omega\geq r}=\set{x\in\cK \mid \omega(x)\geq r}$, $\cK_{\omega= r}=\set{x\in\cK \mid \omega(x) = r}$.  

If $\Omega$ is a set, we  define the subgroup $U^ {pma}_{\Omega}=\prod_{\alpha\in\Delta_+}\, X_{\alpha}(\g_{\alpha,\Z}\otimes \cK_{\omega\geq f_{\Omega}(\alpha)})\subset G^{pma}$\index{U@$U^{pma}_{\Omega}$}.
Actually, for $\alpha\in\Phi^+=\Delta_{re}^+$, $X_{\alpha}(\g_{\alpha,\Z}\otimes \cK_{\omega\geq f_{\Omega}(\alpha)})=x_{\alpha}(\cK_{\omega\geq f_{\Omega}(\alpha)})=:U_{\alpha,\Omega}$\index{U@$U_{\alpha,\Omega}$}. 
We then define \[U_{\Omega}^ {pm+}=U_{\Omega}^ {pma}\cap G=U_{\Omega}^ {pma}\cap U^+,\]\index{U@$U_\Omega^{pm+}$} see \cite[4.5.2, 4.5.3 and 4.5.7]{rousseau2016groupes}. When $\Omega$ is a filter, we set $U^ {pma}_{\Omega}:=\cup_{S\in\Omega}\, U^ {pma}_{S}$ and $U_{\Omega}^ {pm+}:=U_{\Omega}^ {pma}\cap G$

\par We may also consider the negative completion $G^ {nma}=\mathfrak{G}^ {nma}(\cK)$ of $G$, and define the subgroup $U^ {ma-}_{\Omega}=\prod_{\alpha\in\Delta_-}\, X_{\alpha}(\g_{\alpha,\Z}\otimes \cK_{\omega\geq f_{\Omega}(\alpha)})$.
For $\alpha\in\Phi^-=\Delta_{re}^-$, $X_{\alpha}(\g_{\alpha,\Z}\otimes \cK_{\omega\geq f_{\Omega}(\alpha)})=x_{\alpha}(\cK_{\omega\geq f_{\Omega}(\alpha)})=:U_{\alpha,\Omega}$.
We then define $U_{\Omega}^ {nm-}=U_{\Omega}^ {ma-}\cap G=U_{\Omega}^ {ma-}\cap U^-$.

Let $\Psi$ be a closed subset of $\Delta_+$. One sets $U_{\Omega}^{pm}(\Psi)=\fU_\Psi^{pma}(\cK)\cap U_\Omega^{pm+}$.\index{u@$U_{\Omega}^{pm}(\Psi)$} By the uniqueness in the decomposition of the elements of $U^{pma}$ as a product, every element of  $U_{\Omega}^{pm}(\Psi)$ belongs to  $\prod_{\alpha\in \Psi} X_\alpha(\cK_{\omega\geq f_\Omega(\alpha)})$. If $\Psi$ is a closed subset of $\Delta^-$, one sets $U_{\Omega}^{nm}(\Psi)=\fU_\Psi^{nma}(\cK)\cap U_{\Omega}^{nm-}$\index{u@$U_{\Omega}^{nm}(\Psi)$}. Note that $U_{\Omega}^{pm+}=U_{\Omega}^{pm}(\Delta_+)$ and $U_{\Omega}^{nm-}=U_{\Omega}^{nm}(\Delta_-)$.

 Let $\Omega$ be a filter on $\A$. We denote by $N_\Omega$\index{N@$N_\Omega$} the fixator of $\Omega$ in $N$ (for the action of $N$ on $\A$). If $\Omega$ is not a set, we have $N_\Omega=\bigcup_{S\in \Omega} N_S$. Note that we drop  the hats used in \cite{rousseau2016groupes}. When $\Omega$ is  open  one has $N_{\Omega}=N_{\A}=\fT(\cO):=\mathfrak{T}(\cK_{\omega\geq0})=\mathfrak{T}(\cK_{\omega=0})$\index{T@$\fT(\cO)$}.

  If $x\in \A$, we set $G_x=U_x^{pm+}.U_x^{nm-}.N_x$. This is a subgroup of $G$. If $\Omega\subset \A$ is a set, we set $G_\Omega=\bigcap_{x\in \Omega} G_x$\index{G@ $G_x$, $G_\Omega$} and if $\Omega$ is a filter, we set $G_\Omega=\bigcup_{S\in \Omega} G_S$. Note that in \cite{rousseau2016groupes}, the definition of   $G_x$ is much more complicated (see \cite[D\'efinition 4.13]{rousseau2016groupes}). However it is equivalent to this one by \cite[Proposition 4.14]{rousseau2016groupes}.

A filter is said to have a ``good fixator'' if it satisfies \cite[D\'efinition 5.3]{rousseau2016groupes}. There are many examples of filters with good fixators (see \cite[5.7]{rousseau2016groupes}): points, local faces, sectors, sector germs, $\A$, walls, half apartments, \ldots For such a filter $\Omega$, we have: \begin{equation}\label{e_good_fixator}
G_{\Omega}=U_{\Omega}^ {pm+}.U_{\Omega}^ {nm-}.N_{\Omega}=U_{\Omega}^ {nm-}.U_{\Omega}^ {pm+}.N_{\Omega}.
\end{equation}

 We then have: \begin{equation}\label{e_UOmega_fixators}
 U_{\Omega}^ {pm+}=G_{\Omega}\cap U^+ \text{ and }U_{\Omega}^ {nm-}=G_{\Omega}\cap U^-, 
\end{equation}
as $U^-\cap U^+.N=U^+\cap N=\set1$, by \cite[Remarque 3.17]{rousseau2016groupes}  and \cite[1.2.1 (RT3)]{remy2002groupes}.

When $\Omega=C_0^+=germ_0(C^v_f)$ is the (fundamental) positive local chamber in $\A$, $K_I:=G_\Omega$ is called the (fundamental) \textbf{Iwahori subgroup}. \index{K@$K_I$} When $\Omega$ is a face of $C_0^+$, $G_\Omega$ is called a \textbf{parahoric subgroup}.

For $\Omega$ a set or a filter, one defines:

\par  $U_{\Omega}=\langle U_{\alpha,\Omega} \mid \alpha\in\Phi \rangle$\quad, \quad $U^\pm_{\Omega}=U_{\Omega}\cap U^\pm$\quad and \quad$U_{\Omega}^ {\pm\pm}=\langle U_{\alpha,\Omega} \mid \alpha\in\Phi^\pm \rangle$. Then one has $U_{\Omega}^{++}\subset U_{\Omega}^+\subset U_{\Omega}^{pm+}$, but these inclusions are not equalities in general, contrary to the reductive case (see \cite[4.12.3 a and 5.7 3)]{rousseau2016groupes}).

\begin{Lemma}\label{l_uniqueness_decomposition_UUT}
Let   $(u_+,u_-,t),(u_+',u_-',t')\in U^+\times U^-\times T$. Assume  that $u_+ t u_-=u'_+ t'u'_-$ or $u_+ u_-t=u'_+  u'_-t'$ or $tu_+u_-=t'u_+' u_-'$. Then $u_-=u'_-$, $u_+=u'_+$ and $t=t'$.
\end{Lemma}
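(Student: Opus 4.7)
My plan is to reduce the three cases to the first one, and then to handle the first case by a direct manipulation using the facts recalled just before the statement, namely that $U^-\cap U^+N=\{1\}$ and $U^+\cap N=\{1\}$, together with the fact that $T$ normalizes $U^+$ (this last point follows from \eqref{e_KMT4} in the excerpt).

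First I would treat the case $u_+ t u_- = u_+' t' u_-'$. Set $v=(u_+')^{-1}u_+\in U^+$, $s=t^{-1}t'\in T$ and $w=u_-'u_-^{-1}\in U^-$. Rearranging the identity gives $vt=tsw$, hence $t^{-1}vt = sw$. Since $T$ normalizes $U^+$, the element $v':=t^{-1}vt$ lies in $U^+$. From $v'=sw$ I get $w=s^{-1}v'$. Writing $s^{-1}v' = (s^{-1}v's)\cdot s^{-1}$ with $s^{-1}v's\in U^+$ and $s^{-1}\in T\subset N$, I see that $w\in U^-\cap U^+N=\{1\}$, so $w=1$ and thus $u_-=u_-'$. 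The relation $v'=sw$ then becomes $v'=s$, i.e.\ $v'\in U^+\cap T\subset U^+\cap N=\{1\}$, giving $v'=1$ and $s=1$. The equality $s=1$ yields $t=t'$, and $v'=1$ together with $v'=t^{-1}vt$ yields $v=1$, hence $u_+=u_+'$.

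Next, to handle the case $u_+u_-t=u_+'u_-'t'$ I conjugate the $U^-$-factor across the torus: set $\tilde u_-=t^{-1}u_- t$ and $\tilde u_-'=t'^{-1}u_-'t'$, which lie in $U^-$ because $T$ normalizes $U^-$. The identity becomes $u_+t\tilde u_- = u_+'t'\tilde u_-'$, and the previously treated case gives $u_+=u_+'$, $t=t'$ and $\tilde u_-=\tilde u_-'$; the last equality combined with $t=t'$ yields $u_-=u_-'$. The case $tu_+u_-=t'u_+'u_-'$ is symmetric: set $\tilde u_+=tu_+t^{-1}$ and $\tilde u_+'=t'u_+'t'^{-1}$ in $U^+$, rewrite the identity as $\tilde u_+ tu_-=\tilde u_+' t'u_-'$, and conclude by the first case as before.

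I do not foresee a significant obstacle; the only subtle point is that one must be careful to only use $U^-\cap U^+N=\{1\}$ (not the nonexistent $U^-\cap NU^+=\{1\}$) and $U^+\cap N=\{1\}$, which is why I need to insert the intermediate step $s^{-1}v' = (s^{-1}v's)\cdot s^{-1}$ in order to place the element on the correct side of $N$.
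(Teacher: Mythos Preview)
Your proof is correct and follows essentially the same strategy as the paper's: both rearrange the first identity to exhibit an element lying simultaneously in $U^-$ and in $U^+T$, and then invoke the triviality of that intersection (you use the fact $U^-\cap U^+N=\{1\}$ recalled in the paper just before the lemma, while the paper cites (DR5) from \cite{rousseau2006immeubles}); the only cosmetic difference is that the paper conjugates by $T$ on the $U^-$ side whereas you conjugate on the $U^+$ side, and the paper dismisses the remaining two cases as ``similar'' while you reduce them explicitly to the first.
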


\begin{proof}
Assume $u_+t u_-=u'_+ t' u'_-$. We have $(u'_+)^{-1}u_+t =t'u_- (u'_-)^{-1}$. As $t$ normalizes $U^-$, we deduce the existence of $u_-''$ such that $(u'_+)^{-1} u_+ tt'^{-1}=u_-''$. By \cite[Proposition 1.5 (DR5)]{rousseau2006immeubles} (there is a misprint in this proposition, $Z$ is in fact $T$), we deduce $(u'_+)^{-1} u_+ tt'^{-1}=1$ and hence $u'_+=u_+$ and $t=t'$. Therefore $u_-=u'_-$. The other cases are similar.
\end{proof}

By \cite[4.10]{rousseau2016groupes} and \eqref{e_KMT4}, we have the following lemma.

\begin{Lemma}\label{l_action_T_U}
Let $\Omega$ be a filter on $\A$, $t\in T$ and $\Psi$ be a closed subset of $\Delta_+$ (resp. $\Delta_-$). Then $tU_{\Omega}^{pm+} t^{-1}=U_{t.\Omega}^{pm+}$, $tU_{\Omega}^{pm}(\Psi)t^{-1}=U_{t.\Omega}^{pm}(\Psi)$ (resp. $tU_{\Omega}^{nm-}(\Psi) t^{-1}=U_{t.\Omega}^{nm-}(\Psi)$).
\end{Lemma}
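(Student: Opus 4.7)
The plan is to reduce all three equalities to a single pointwise identity plus a short computation on $f_\Omega$. The starting point is that for any $\alpha \in \Delta$, any $c = \sum_{x\in\cB_\alpha} \lambda_x x \in \g_{\alpha,\Z}\otimes\cK$, and any $t\in T$, applying \eqref{e_KMT4} termwise to the defining product $X_\alpha(c) = \prod_{x\in\cB_\alpha}[\exp](\lambda_x x)$ yields
\[t X_\alpha(c) t^{-1} \;=\; X_\alpha(\alpha(t)\,c).\]
This is the only group-theoretic input needed; everything else is a computation with valuations.

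Next, I would compute how the level function $f_\Omega$ transforms under the translation $\nu(t)$. Since $t$ acts on $\A$ by translation with $\chi(\nu(t)) = -\omega(\chi(t))$ for all $\chi \in X$, and since each $\alpha\in\Delta$ restricts to a linear form on $\A$, one gets
\[f_{t.\Omega}(\alpha) \;=\; \inf\{r\in\R \mid \alpha(\Omega) + \alpha(\nu(t)) + r \subset [0,+\infty)\} \;=\; f_\Omega(\alpha) + \omega(\alpha(t)).\]
In particular, multiplication by $\alpha(t) \in \cK^\times$ is a bijection $\cK_{\omega\geq f_\Omega(\alpha)} \xrightarrow{\sim} \cK_{\omega\geq f_{t.\Omega}(\alpha)}$, so conjugation by $t$ sends $X_\alpha(\g_{\alpha,\Z} \otimes \cK_{\omega\geq f_\Omega(\alpha)})$ bijectively onto $X_\alpha(\g_{\alpha,\Z} \otimes \cK_{\omega\geq f_{t.\Omega}(\alpha)})$.

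Taking the ordered product over $\alpha\in\Delta_+$ (each factor is sent to a factor of the same index, so the normal form \eqref{e_normal_form_Upma} is preserved) gives $tU_\Omega^{pma} t^{-1} = U_{t.\Omega}^{pma}$; intersecting with $G$, which $T$ normalizes, yields $tU_\Omega^{pm+}t^{-1} = U_{t.\Omega}^{pm+}$. Restricting the same product to $\alpha\in\Psi$ gives the statement for $U_\Omega(\Psi)$, and running the symmetric argument on the negative side (replacing $\Delta_+$ by $\Delta_-$ and $\fG^{pma}$ by $\fG^{nma}$) gives it for $U_\Omega^{nm-}(\Psi)$. For general filters $\Omega$, the equalities follow by taking the union of the cases $S\in \Omega$, using $t.\Omega = \{t.S \mid S \in \Omega\}$.

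The main (and essentially only) obstacle is bookkeeping: one must confirm that termwise conjugation is compatible with both layers of infinite ordered product — the one over $\cB_\alpha$ inside each $X_\alpha$ and the one over $\Delta_+$ — and that the intersections with $G$, $U^\pm$, or $\fU_\Psi$ commute with conjugation by $t\in T$, which they do because $T\subset G$ normalizes each of these subgroups. Once this is set up, the key arithmetic identity $\omega(\alpha(t)) = -\alpha(\nu(t))$ is immediate from the definition of $\nu$ and closes the argument.
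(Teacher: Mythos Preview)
Your proof is correct and takes essentially the same approach as the paper, which merely cites \cite[4.10]{rousseau2016groupes} and \eqref{e_KMT4} without further detail. You have spelled out precisely the valuation computation $f_{t.\Omega}(\alpha)=f_\Omega(\alpha)+\omega(\alpha(t))$ that is implicit in that reference, and the passage to infinite products, intersections with $G$, and filters is handled cleanly.
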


 \subsubsection{Masure}\label{n1.3.1}
 
 We now define the masure  $\I=\I(\fG,\cK,\omega)$\index{I@$\I$}. As a set, $\I=G\times \A/\sim$, where $\sim$ is defined as follows: \[\forall (g,x),(h,y)\in G\times \A, (g,x)\sim (h,y)\Leftrightarrow \exists n\in N\ \mid y=\nu(n).x\text{ and }g^{-1}hn\in G_x.\]
  We regard $\A$ as a subset of $\I$ by  identifying $x$ and $(1,x)$, for $x\in \A$. The group $G$ acts on $\I$ by $g.(h,x)=(gh,x)$, for $g,h\in G$ and $x\in \A$. An \textbf{apartment} is a set of the form $g.\A$, for $g\in G$. The stabilizer of $\A$ in $G$ is $N$ and if $x\in \A$, then the fixator of $x$ in $G$ is $G_x$. More generally, when $\Omega\subset \A$, then $G_{\Omega}$ is the fixator of $\Omega$ in $G$ and $G_{\Omega}$ permutes transitively the apartments containing $\Omega$.
   If $A$ is an apartment, we transport all the notions that are preserved by $W_\Lambda$ (for example segments, walls, faces, chimneys, etc.) to $A$.  Then by \cite[Corollary 3.7]{hebert2022new}, if $(\alpha_i)_{i\in I}$ and $(\alpha_i^\vee)_{i\in I}$ are free, then $\I$ satisfies the following axioms: 
 
 \medskip
 
 (MA II) : Let $A,A'$ be two apartments.  Then $A\cap A'$ is a finite intersection of half-apartments and there exists $g\in G$ such that $g.A=A'$ and $g$ fixes $A\cap A'$.

\medskip

\par (MA III): if $\fR$ is the germ of a splayed chimney and if $F$ is a local  face or a germ of a chimney, then there exists an apartment containing $\fR$ and $F$.

\medskip

We did not recall the definition of a chimney and we refer to \cite{rousseau2011masures} for such a definition. We will only use the fact that a sector-germ is a particular case of a germ of a splayed chimney.

We also have: \begin{itemize}
\item The stabilizer of $\A$ in $G$ is $N$ and $N$ acts on $\A\subset \I$ via $\nu$.

\item  The group $U_{\alpha,r}:=\{x_\alpha(u)\mid u\in \cK, \omega(u)\geq r\}$\index{U@$U_{\alpha,r}$}, for $\alpha\in\Phi,r\in\Lambda$, fixes the half-apartment $D(\alpha,r)$.   It acts simply transitively on the set of apartments in $\I$ containing $D(\alpha,r)$.
\end{itemize}

 The first point of the next proposition extends \cite[Proposition 7.4.8]{bruhat1972groupes} to masures.

\begin{Proposition}\label{p_BT_7.4.8}
\begin{enumerate}
    \item  Let $g\in G$. Then $\A\cap g^{-1}.\A$ is a finite intersection of half-apartments and there exists $n\in N$ such that $g.x=n.x$ for every $x\in \A\cap g^{-1}.\A$.
    
    \item Let $g\in G$. Then $\{x\in \A\mid g.x=x\}$ is convex. In particular if $\Omega$ is a subset of  $\A$, then $G_{\Omega}=G_{\mathrm{conv}(\Omega)}$, where $\conv(\Omega)$ is the convex hull of $\Omega$.
\end{enumerate}

\end{Proposition}

\begin{proof}
 1). Let $g\in G$.  We assume that $\A\cap g^{-1}.\A$ is non-empty. Then it is a finite intersection of half-apartments by (MA II) and  there exists $h\in G$ such that $hg.\A=\A$ and $h$ fixes $\A\cap g.\A$. Then $hg$ stabilizes $\A$ and thus it belongs to $N$, by \cite[5.7 5)]{rousseau2016groupes}. We get 1) by setting $n=hg$.
    
    2) Let $g\in G$, $\Omega_1=\A\cap g^{-1}.\A$ and $\Omega=\{x\in \A\mid g.x=x\}$. We have $\Omega\subset \Omega_1$. Let $n\in N$ be such that $g.x=n.x$ for all $x\in \Omega_1$. Let $f=\nu(n):\A\rightarrow \A$. Then $\Omega=\Omega_1\cap \{x\in \A\mid f(x)=x\}$. As $f$ is affine and $\Omega_1$ is convex, we have that $\Omega$ is convex.
\end{proof}

\begin{Remark}
In \ref{sub_Root_generating_system}, we did not assume the freeness of the families $(\alpha_i)_{i\in I}$ and $(\alpha_i^\vee)_{i\in I}$, since there are interesting Kac-Moody groups, which do not satisfy this assumption. For example,  $G:=\mathrm{SL}_n(\cK[u,u^{-1}])\rtimes \cK^*$ is naturally equipped with the structure of a Kac-Moody group associated with a root generating system $\mathcal{S}$ having nonfree coroots. This group is particularly interesting for examples, since it is one of the only Kac-Moody groups in which we can make explicit computations. In \cite{hebert2022new}, we proved that if $(\alpha_i)_{i\in I}$ and $(\alpha_i^\vee)_{i\in I}$ are free families, then the masure associated with $G$ satisfies (MA II) and (MA III). Without this assumption we do not know. In \cite[Théorème 5.16]{rousseau2016groupes}, Rousseau proves that $\I$ satisfies the axioms (MA2) to (MA5) of \cite{rousseau2011masures}. We did not introduce these axioms since they are more complicated and a bit less convenient. However it is easy to adapt the  proofs of this paper to use the axioms of \cite{rousseau2011masures} instead of those of \cite{hebert2022new}, for example,   retractions are already available in \cite{rousseau2011masures}.
\end{Remark}

\subsubsection{Retraction centred at a sector-germ}

Let $\fQ$ be a sector-germ of $\A$. If $x\in \I$, then by (MA III), there exists an apartment $A$ of $\I$ containing $\fQ$ and $x$. By (MA II), there exists $g\in G$ such that $g.A=\A$ and $g$ fixes $A\cap \A$. One sets $\rho_{\fQ}(x)=g.x\in \A$. This is well-defined, independently of the choices of $A$ and $g$, by (MA II). This defines the \textbf{retraction} $\rho_{\fQ}:\I\rightarrow \A$\index{r@$\rho_{\fQ}, \rho_{+\infty}$} \textbf{onto $\A$ and centred at $\fQ$}. When $\fQ=+\infty$, we denote it $\rho_{+\infty}$. If $x\in \I$, then $\rho_{+\infty}(x)$ is the unique element of $U^+.x\cap \A$.

\subsubsection{Topology defined by a filtration}\label{ss_topology}

A \textbf{filtration of $G$ by subgroups} is a sequence $(V_n)_{n\in \N^*}$ of subgroups of $G$ such that $V_{n+1}\subset V_n$ for all $n\in \N^*$. Let $(V_n)$ be a filtration of $G$ by subgroups. The associated topology $\sT\left((V_n)\right)$\index{T@$\sT\left((V_n)\right)$}  is the topology on $G$ for which a set $V$ is open if for all $g\in V$, there exists $n\in \N^*$ such that $g.V_n\subset V$.

Let $(V_n),(\tilde{V}_n)$ be two filtrations of $G$ by subgroups. We say that $(V_n)$ and $(\tilde{V}_n)$ are equivalent if for all $n\in \N$, there exist $m,\tilde{m}\in \N$ such that $V_{m}\subset \tilde{V}_n$ and $\tilde{V}_{\tilde{m}}\subset V_n$. This defines an equivalence relation on the set of filtrations of $G$ by subgroups. Then  $(V_n)$ and $(\tilde{V}_n)$ are equivalent filtrations, if and only if $\sT\left((V_n)\right)=\sT\left((\tilde{V}_n)\right)$. 

 We say that $(V_n)$ is \textbf{conjugation-invariant} if for all $g\in G$, $(gV_n g^{-1})$ is equivalent to $(V_n)$. Then $\sT\left((V_n)\right)$ equips $G$ with the structure of a topological group if and only if $(V_n)$ is conjugation invariant, by \cite[Exercise 8.5]{marquis2018introduction}.

\section{Congruence subgroups}\label{s_Congruence_subgroups}

In this section, we define and study the congruence subgroups. They will be  a key tool in order to define the topology $\sT$ in section~\ref{s_Def_topologies}. We prove however that the filtration $(\ker \pi_n)_{n\in \N^*}$  is not conjugation-invariant. We also study how they decompose.

\subsection{Definition of the congruence subgroup}

\begin{Proposition}\label{p_fixator_zero}
The fixator $G_0$ of $0$ in $G$ is the  group $\fG^{\mathrm{min}}(\cO)$. 
\end{Proposition}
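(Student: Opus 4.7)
The plan is to prove the equality by a double inclusion, with the $\subseteq$ direction being essentially formal and the $\supseteq$ direction requiring some work on the unipotent pieces.

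First I would show $\fG^{\min}(\cO)\subset G_0$. Since $\cO$ is local and hence semilocal, \eqref{e_minimal_group_semilocal_ring} gives $\fG^{\min}(\cO)=\langle \fU_{\pm\alpha_i}(\cO),\fT(\cO)\mid i\in I\rangle$. For every $i\in I$ and $a\in\cO$ we have $\omega(a)\geq 0=f_{\{0\}}(\pm\alpha_i)$, so $x_{\pm\alpha_i}(a)\in U_{\pm\alpha_i,0}$ which fixes the half-apartment $D(\pm\alpha_i,0)$ and in particular fixes $0$. Elements of $\fT(\cO)$ fix the whole of $\A$, since $\fT(\cO)=N_\A$ in the notation of \S2.3.3. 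Hence every generator of $\fG^{\min}(\cO)$ lies in $G_0$.

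For the converse inclusion $G_0\subset\fG^{\min}(\cO)$, I would use the good-fixator decomposition \eqref{e_good_fixator}, available because the singleton $\{0\}$ has good fixator:
\[
G_0=U_0^{pm+}\cdot U_0^{nm-}\cdot N_0,
\]
and show each factor already sits in $\fG^{\min}(\cO)$. For $N_0$, the morphism $\nu:N\to \mathrm{Aut}(\A)$ has image $W_\Lambda=W^v\ltimes(Y\otimes\Lambda)$ and kernel $T$, and the stabilizer of $0$ in $W_\Lambda$ is $W^v$. Combined with the fact that $W^v$ is generated by the $r_i=\nu(\tilde r_i)$ and that $T\cap N_0=\fT(\cO)$, this gives $N_0=\fT(\cO)\cdot\langle\tilde r_i\mid i\in I\rangle$. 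Both $\fT(\cO)$ and the $\tilde r_i=x_{\alpha_i}(1)x_{-\alpha_i}(-1)x_{\alpha_i}(1)$ lie in $\fG^{\min}(\cO)$ by \eqref{e_minimal_group_semilocal_ring} and \eqref{e_tilde_s_alpha}.

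The remaining point, and the real difficulty, is to show $U_0^{pm+},U_0^{nm-}\subset\fG^{\min}(\cO)$. By definition $U_0^{pm+}=U_0^{pma}\cap G$, computed inside $\fG^{pma}(\cK)$, and since $f_{\{0\}}$ vanishes on $\Delta$ this equals $\fU^{pma}(\cO)\cap G$. What I need is therefore the pullback-type statement
\[
\fG^{\min}(\cK)\cap \fG^{pma}(\cO)=\fG^{\min}(\cO)
\]
as subgroups of $\fG^{pma}(\cK)$, which is precisely the kind of compatibility Marquis establishes for his functor on Bézout domains; since $\cO$ is a valuation ring hence Bézout, one can invoke \cite[Proposition 8.128]{marquis2018introduction} and its proof to conclude. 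Given this, $U_0^{pm+}\subset G\cap\fU^{pma}(\cO)\subset\fG^{\min}(\cK)\cap\fG^{pma}(\cO)=\fG^{\min}(\cO)$, and likewise for $U_0^{nm-}$.

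The expected obstacle is exactly this last step. If the pullback formulation is not available off the shelf, the fallback is a direct argument: take an element $u\in U_0^{pm+}$, use its unique normal form \eqref{e_normal_form_Upma} $u=\prod_{\alpha\in\Delta_+} X_\alpha(c_\alpha)$ with $c_\alpha\in\g_{\alpha,\cO}$, and argue by induction on the height $\htt(\alpha)$ of the support that the partial products can be modified, using commutation relations in $\fG^{pma}$ and Lemma~\ref{l_uniqueness_decomposition_UUT} style unique factorization, to express $u$ as a product of elements $x_{\alpha_i}(a)$, $x_{-\alpha_i}(a)$ with $a\in\cO$, and elements of $\fT(\cO)$. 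This ``integrality descends along the embedding $\fG\hookrightarrow\fG^{pma}$'' step is the only genuinely non-formal ingredient of the proof.
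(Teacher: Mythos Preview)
Your overall architecture is exactly the paper's: the easy inclusion via \eqref{e_minimal_group_semilocal_ring}, then the good-fixator decomposition $G_0=U_0^{pm+}\cdot U_0^{nm-}\cdot N_0$, and the computation of $N_0=\langle\tilde r_i\rangle\cdot\fT(\cO)$ are all done the same way.

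The only substantive divergence is in the unipotent factors, and here your argument has a gap. The paper does not attempt a pullback formula or a height induction; it simply invokes \cite[2.4.1 2)]{bardy2022twin}, which gives directly
\[
U_0^{pm+}=U_0^+:=\langle x_\alpha(a)\mid \alpha\in\Phi,\ a\in\cO\rangle\cap U^+,
\]
and the right-hand side is visibly contained in $\fG^{\min}(\cO)$ since each $x_\alpha(a)$ with $\alpha\in\Phi$, $a\in\cO$ is a conjugate of some $x_{\alpha_i}(\pm a)$ by an element $\tilde w\in\langle\tilde r_j\rangle\subset\fG^{\min}(\cO)$. This equality $U_0^{pm+}=U_0^+$ is precisely the nontrivial input: at the special point $0$, the completed fixator coincides with the minimal one.

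Your proposed routes do not obviously recover this. Whether \cite[Proposition 8.128]{marquis2018introduction} yields the pullback identity $\fG^{\min}(\cK)\cap\fG^{pma}(\cO)=\fG^{\min}(\cO)$ is not clear from what the paper records about that reference; you would need to check Marquis carefully. More seriously, your fallback height induction is not workable as stated: an element of $U^+$, viewed in $\fU^{pma}(\cK)$, can have a normal form \eqref{e_normal_form_Upma} with infinitely many nonzero $c_\alpha$, including imaginary-root contributions, so there is no finite support on which to induct, and commutation relations alone will not terminate the process. The equality $U_0^{pm+}=U_0^+$ is exactly the statement that this difficulty disappears at $0$, and it requires a genuine argument (supplied by the cited reference), not a formal rewriting.
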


\begin{proof}
For $i\in I$, $x_{\alpha_i}(\cO)$, $x_{-\alpha_i}(\cO)$ and $\fT(\cO)$ fix $0$. Therefore by \eqref{e_minimal_group_semilocal_ring},  $\fG^{\min}(\cO)\subset G_0$. 

By \cite[Proposition 4.14]{rousseau2016groupes} \begin{equation}\label{e_G0}
G_0=U_0^{pm+} U_0^{nm-} N_0,
\end{equation} where $N_0=\{n\in N\mid n.0=0\}$.  By \cite[2.4.1 2)]{bardy2022twin}, we have \[U_0^{pm+}=U_0^+:=\langle x_\alpha(u)\mid \alpha\in \Phi, u\in \cO\rangle\cap U^+\subset \fG^{\min}(\cO)\] and \[U_0^{nm-}=U_0^-:=\langle x_\alpha(u)\mid \alpha\in \Phi, u\in \cO\rangle\cap U^-\subset \fG^{\min}(\cO).\]  For $i\in I$, set $\tilde{r}_{i}=x_{\alpha_i}(1) x_{-\alpha_i}(-1)x_{\alpha_i}(1)\in \fG^{\min}(\cO)$.  We have  $N=\langle \fT(\cK), \tilde{r}_{i}\mid i \in I\rangle$. Let $n\in N_0$. Write $\nu^v(n)=w\in W^v$, where $\nu^v$ was defined in \ref{ss_m_KM_grp}. Write $w=r_{i_1}\ldots r_{i_k}$, with $k=\ell(w)$ and $i_1,\ldots,i_k\in I$. Let $n'=\tilde{r}_{i_1}\ldots \tilde{r}_{i_k}\in N_0$. By  \cite[1.6 4)]{rousseau2016groupes} $\nu^v(n')=w$ and  $t:=n'^{-1}n\in T\cap \ker(\nu)$. By \cite[4.2 3)]{rousseau2016groupes}, $t\in \fT(\cO)$. Therefore
 \begin{equation}\label{e_N0}
N_0=\langle \tilde{r}_{i}\mid i\in I\rangle .\fT(\cO).
\end{equation} and in particular, $N_0\subset \fG^{\min}(\cO)$. Proposition follows. 
\end{proof}

Recall that we assumed that $\Lambda=\omega(\cK^*)\supset \Z$. If $\omega(\cK^*)$ is discrete, we can normalize $\omega$ so that $\Lambda=\Z$. We fix $\qp\in \cO$\index{p@$\qp$} such that $\omega(\qp)=1$.

For $n\in \N^*$, we denote by $\pi_n^{pma}:\fG^{pma}(\cO)\rightarrow \fG^{pma}(\cO/\qp^n \cO)$\index{p@$\pi_n^{pma}$, $\pi_n^{nma}$, $\pi_n$}  and $\pi_n^{nma}:\fG^{nma}(\cO)\rightarrow \fG^{nma}(\cO/\qp^n \cO)$ the morphisms associated with the canonical projection $\cO\twoheadrightarrow \cO/\qp^n \cO$. We denote by $\pi_n$ the restriction of $\pi_n^{pma}$ to $\fG^{\min}(\cO)$.  By Proposition~\ref{p_explicit_morphism}, $\pi_n$ is also the restriction of $\pi^{nma}_n$ to $\fG^{\min}(\cO)$ (it is also the restrictions of $\pi_n^{pma,w}:\fG^{pma,w}(\cO)\rightarrow \fG^{pma,w}(\cO/\qp^n \cO)$ and $\pi_n^{nma,w}:\fG^{nma,w}(\cO)\rightarrow \fG^{nma,w}(\cO/\qp^n \cO)$, for $w\in W^v$). By Proposition~\ref{p_explicit_morphism} and \eqref{e_minimal_group_semilocal_ring}, $\pi_n$, $\pi_n^{pma}$ and $\pi_n^{nma}$ are surjective. Their kernels are respectively called the \textbf{$n$-th congruence subgroups} of $\fG^{\min}(\cO)$, $\fG^{pma}(\cO)$ and $\fG^{nma}(\cO)$.

The family $(\ker \pi_n)_{n\in \N^*}$ is a filtration of $G$. We prove below that it is not conjugation-invariant when $W^v$ is infinite, which   motivates the introduction of other filtrations $(\cV_{n\lambda})_{n\in\N^*}$, for $\lambda\in Y^+$  regular, in section~\ref{s_Def_topologies}.

\begin{Lemma}\label{l_action_pin_masure}
Let $x\in \A$ be such that $\alpha_i(x)>0$ for all $i\in I$. Suppose that $W^v$ is infinite. Then for all $n\in\N^*$, there exists $g\in \ker(\pi_n)$ such that $g.x\neq x$. 
\end{Lemma}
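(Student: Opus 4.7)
The plan is to exhibit, for each $n \in \N^*$, a positive real root $\alpha \in \Phi_+$ with $\alpha(x) > n$ and then take $g = x_{-\alpha}(\qp^n)$; this element lies in $\ker \pi_n$ but not in the fixator $G_x$, hence moves $x$.

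First I would check that $g := x_{-\alpha}(\qp^n)$ sits inside $\ker \pi_n$ for any $\alpha \in \Phi$: by Proposition~\ref{p_explicit_morphism}, $\pi_n(x_{-\alpha}(\qp^n)) = x_{-\alpha}(\qp^n \bmod \qp^n \cO) = x_{-\alpha}(0) = 1$, and $x_{-\alpha}(\qp^n) \in \fG^{\min}(\cO)$ since $\qp^n \in \cO$.

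Next I would show that, when $\alpha(x) > n$, this element cannot fix $x$. Indeed, for a real root $\beta$ one has $f_{\{x\}}(\beta) = -\beta(x)$, so by \eqref{e_UOmega_fixators} applied to $\Omega = \{x\}$,
\[
G_x \cap U_{-\alpha} = U_{-\alpha, \, \alpha(x)} = \{\, x_{-\alpha}(u) : \omega(u) \geq \alpha(x)\,\}.
\]
Since $\omega(\qp^n) = n < \alpha(x)$ by hypothesis, $g \notin G_x$, hence $g.x \neq x$ (as $G_x$ is the full fixator of $x$ in $G$).

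The only genuine content is to produce such an $\alpha$. Since $x \in C^v_f$, one has $\alpha_i(x) > 0$ for every $i \in I$; set $c := \min_{i \in I} \alpha_i(x) > 0$. Every $\alpha \in \Phi_+$ writes as $\alpha = \sum_{i \in I} n_i \alpha_i$ with $n_i \in \Z_{\geq 0}$, whence
\[
\alpha(x) = \sum_{i \in I} n_i \alpha_i(x) \geq c \cdot \htt(\alpha).
\]
It therefore suffices to show that heights of positive real roots are unbounded when $W^v$ is infinite. For this I would argue that for each $h \in \N$, the set $\{\alpha \in \Phi_+ \mid \htt(\alpha) \leq h\}$ is finite: it is contained in the finite set of non-negative integer combinations $\sum n_i \alpha_i$ with $\sum n_i \leq h$ inside the finitely generated lattice $Q$. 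Since $\Phi_+ = W^v \cdot \{\alpha_i\} \cap Q_+$ is infinite whenever $W^v$ is infinite (real roots are in bijection with pairs $(w, i)$ modulo simple stabilizers, and $W^v$ has infinitely many elements), heights in $\Phi_+$ must be unbounded. Picking $\alpha \in \Phi_+$ with $\htt(\alpha) > n/c$ gives $\alpha(x) > n$ and completes the argument.

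The only mild obstacle is confirming the unboundedness of heights cleanly; everything else is a direct application of the formula for $G_x \cap U_{-\alpha}$ and the explicit description of $\pi_n$.
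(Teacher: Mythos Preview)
Your proposal is correct and follows essentially the same approach as the paper: pick $\alpha\in\Phi_+$ with $\htt(\alpha)>n/\min_i\alpha_i(x)$ so that $\alpha(x)>n$, and set $g=x_{-\alpha}(\qp^n)$. The paper simply asserts that $\Phi^+$ is infinite (hence heights are unbounded) and that the fixed set of $g$ in $\A$ is $\{y\mid -\alpha(y)+n\geq 0\}$, whereas you spell out both points in more detail; the arguments are otherwise identical.
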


\begin{proof}
Let $n\in \N^*$. As $\Phi^+$ is infinite, there exists $\beta\in \Phi^+$ such that $\htt(\beta)>\frac{n}{\min_{i\in I} \alpha_i(x)}$. Then $\beta(x)>n$. Let $g=x_{-\beta}(\qp^n)\in \ker \pi_n$. Then the subset of $\A$ fixed by $g$ is $\{y\in \A\mid -\beta(y)+n\geq 0\}$, which does not contain $x$.
\end{proof}

\begin{Lemma}\label{l_non_conjugacy_invariance_congruence}
Assume that $W^v$ is infinite. Then $(\ker(\pi_n))_{n\in\N^*}$ is not conjugation-invariant. 
\end{Lemma}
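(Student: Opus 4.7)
The plan is to exhibit $g\in G$ and a fixed $n\in\N^*$ such that for every $m\in\N^*$, $g\ker(\pi_m)g^{-1}\not\subset\ker(\pi_n)$. This will show that the filtrations $(\ker(\pi_n))$ and $(g\ker(\pi_n)g^{-1})$ are not equivalent, hence that $(\ker(\pi_n))$ is not conjugation-invariant.

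The key structural observation is Proposition~\ref{p_fixator_zero}: $\ker(\pi_n)\subset \fG^{\min}(\cO)=G_0$, so every element of $\ker(\pi_n)$ fixes $0\in\A$. Consequently, for any $g\in G$ and $h\in\ker(\pi_m)$, the conjugate $ghg^{-1}$ fixes $0$ if and only if $h$ fixes $g^{-1}.0$. The idea is therefore to choose $g$ so that $g^{-1}.0$ lies in the open fundamental chamber $C^v_f$, because Lemma~\ref{l_action_pin_masure} guarantees that no $\ker(\pi_m)$ fixes such a point.

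Concretely, I would pick $\mu\in Y$ with $\alpha_i(\mu)>0$ for all $i\in I$. Such $\mu$ exists: $C^v_f$ is a nonempty open cone in $\A=Y\otimes\R$, $Y\otimes\Q$ is dense in $\A$, and $C^v_f$ is a cone, so an integer multiple of any rational point in $C^v_f$ gives an element of $Y\cap C^v_f$. Set $g=\mu(\qp)\in T\subset G$. Using $\chi(\nu(t))=-\omega(\chi(t))$ for $\chi\in X$, one checks that $\nu(g^{-1})$ equals the translation by $\mu$, so $g^{-1}.0=\mu\in C^v_f$.

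Now apply Lemma~\ref{l_action_pin_masure} with $x=\mu$: for every $m\in\N^*$ there exists $h_m\in\ker(\pi_m)$ (explicitly $h_m=x_{-\beta_m}(\qp^m)$ for $\beta_m\in\Phi^+$ of sufficiently large height, available because $W^v$, hence $\Phi^+$, is infinite) with $h_m.\mu\neq\mu$. Then $(gh_mg^{-1}).0=g.(h_m.\mu)\neq g.\mu=0$, so $gh_mg^{-1}$ does not fix $0$, whence $gh_mg^{-1}\notin G_0=\fG^{\min}(\cO)\supset\ker(\pi_1)$. Thus for every $m\in\N^*$, $g\ker(\pi_m)g^{-1}\not\subset\ker(\pi_1)$, which is exactly what we needed (with $n=1$). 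There is no real obstacle: the whole argument reduces to Proposition~\ref{p_fixator_zero} and Lemma~\ref{l_action_pin_masure}; the only minor point to verify is the existence of $\mu\in Y\cap C^v_f$, which follows from the density argument above.
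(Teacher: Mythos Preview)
Your proof is correct and follows essentially the same idea as the paper's: both use that $\ker(\pi_n)\subset G_0$ (Proposition~\ref{p_fixator_zero}), conjugate by a torus element sending $0$ to a point of $C^v_f$, and then invoke Lemma~\ref{l_action_pin_masure}. The paper phrases this as a proof by contradiction through the topological group structure (if $(\ker\pi_n)$ were conjugation-invariant, then $tG_0t^{-1}$ would be open and hence contain some $\ker\pi_n$, forcing $\ker\pi_n$ to fix $\lambda$), whereas you argue directly that $g\ker(\pi_m)g^{-1}\not\subset\ker(\pi_1)$ for all $m$; the content is the same.
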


\begin{proof}
Suppose that $(\ker (\pi_n))$ is conjugation-invariant. Then the topology $\sT((\ker (\pi_n))$ equips $G$ with the structure of a topological group. We have $\ker(\pi_1)\subset \fG^{\min}(\cO)=G_0$ and in particular $G_0=\bigcup_{g\in G_0} g.\ker(\pi_1)$ is open. Let $\lambda\in  Y^+$ be such that $\alpha_i(\lambda)=1$ for all $i\in I$ and $t\in T$ be such that $t.0=\lambda$. Then $H:=t G_0 t^{-1}$ is open (since $G$ is a topological group). As $1\in H$, we deduce the existence of $n\in \N^*$ such that $\ker(\pi_n)\subset H$. As $H$ fixes $\lambda$, this implies that $W^v$ is finite, by Lemma~\ref{l_action_pin_masure}. 
\end{proof}

\subsection{On the decompositions of the congruence subgroups}

Let $\fm=\{x\in \cO\mid \omega(x)>0\}$\index{m@$\fm$} be the maximal ideal of $\cO$ and $\kk=\cO/\fm$\index{k@$\kk$}. Let $\pi_\kk:\fG^{\min}(\cO)\rightarrow \fG^{\min}(\kk)$\index{p@$\pi_\kk$} be the morphism induced by the natural projection $\cO\twoheadrightarrow \kk$. When $\omega(\cK^*)=\Z$, $\pi_\kk=\pi_1$.

In this subsection we study $\ker \pi_\kk$: we prove that it decomposes as the product of its intersections with $U^-$, $U^+$ and $T$ (see Proposition~\ref{p_ker_pi_k}), using the masure $\I$ of $G$. We also describe  $U^-\cap \ker \pi_\kk$ and $U^+\cap \ker \pi_\kk$ through their actions on $\I$ and we deduce that $\ker \pi_\kk$ fixes $C_0^+\cup C_0^-$.  It would be interesting to prove similar properties for $\ker \pi_n$ instead of $\ker \pi_\kk$, for $n\in \N^*$. The difficulty is that when $\omega$ is not discrete or $n\geq 2$, $\cO/\qp^n\cO$ is no longer a field and very few is known for Kac-Moody groups over rings. 

Let $C,C'$ be two alcoves of the same sign based at $0$. By \cite[Proposition 5.17]{hebert2020new}, there exists an apartment $A$ containing $C$ and $C'$. Let $g\in G$ be such that  $g.A=\A$ and $g.C=C_0^+$. Then $g.C'$ is an alcove of $\A$ based at $0$ and thus there exists $w\in W^v$ such that $g.C'=w.C_0^+$. We set $\dw(C,C')=w$\index{d@$\dw$}, which is well-defined, independently of the choices we made (note that in   \cite[1.11]{bardy2016iwahori} the ``$W$-distance'' $\dw$ is defined for more general pairs of alcoves). Then $\dw$ is $G$-invariant.

\begin{Lemma}\label{l_fiber_retraction}
Let $C$ be a positive alcove of $\I$ based at $0$ and $w\in W^v$.  Write $w=r_{i_1}\ldots r_{i_k}$, with $k=\ell(w)$ and $i_1,\ldots,i_k\in I$. Let $\beta_{1}=\alpha_{i_1}$, $\beta_2=r_{i_1}.\alpha_{i_2}$, $\ldots$, $\beta_k=r_{i_1}\ldots r_{i_{k-1}}.\alpha_{i_k}$. Then $\beta_1,\ldots,\beta_k\in \Phi_+$  and we have $\rho_{+\infty}(C)=w.C_0^+$ if and only if  there exists $a_1,\ldots,a_k\in \cO$ such that $C=x_{\beta_1}(a_1)\ldots x_{\beta_k}(a_k).\tilde{r}_{i_1}\ldots \tilde{r}_{i_k}.C_0^+$. 
\end{Lemma}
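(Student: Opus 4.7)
The first claim, that $\beta_1,\dots,\beta_k\in\Phi_+$, follows from a standard Coxeter fact: each prefix $r_{i_1}\cdots r_{i_j}$ of a reduced word is itself reduced, so by the length--root criterion, $\ell(r_{i_1}\cdots r_{i_{j-1}}r_{i_j})=\ell(r_{i_1}\cdots r_{i_{j-1}})+1$ translates into $\beta_j\in\Phi_+$.

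The $(\Leftarrow)$ direction is a direct verification. Setting $u=x_{\beta_1}(a_1)\cdots x_{\beta_k}(a_k)$, one has $u\in U^+$, so $u^{-1}.C=\tilde{w}.C_0^+=w.C_0^+\in\AA$ and thus $\rho_{+\infty}(C)=w.C_0^+$. Since $a_j\in\cO$ and $\beta_j(0)=0$, each $x_{\beta_j}(a_j)$ fixes $D(\beta_j,\omega(a_j))\ni 0$, hence $u$ fixes $0$; together with $\tilde{w}\in N_0$ (see \eqref{e_N0}), this gives that $C=u.\tilde{w}.C_0^+$ is based at $0$.

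For the converse, by definition of $\rho_{+\infty}$ there exists $v\in U^+$ with $C=v.\tilde{w}.C_0^+$, and the condition that $C$ is based at $0$ forces $v\in U^+\cap G_0=U^{pm+}_{\{0\}}$. The heart of the argument is the product decomposition
\[
U^{pm+}_{\{0\}}=\fU_\Psi(\cO)\cdot U^{pm+}_{w.C_0^+},\qquad \Psi:=\{\beta_1,\dots,\beta_k\},
\]
from which one obtains $v=x_{\beta_1}(a_1)\cdots x_{\beta_k}(a_k)\cdot u'$ with $a_j\in\cO$ and $u'$ fixing $\tilde{w}.C_0^+$, yielding $C=x_{\beta_1}(a_1)\cdots x_{\beta_k}(a_k).\tilde{w}.C_0^+$. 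To establish it, I would first check that both $\Psi$ and $\Delta_+\setminus\Psi$ are closed subsets of $\Delta_+$: for $\alpha,\beta\in\Psi$ with $p\alpha+q\beta\in\Delta_+$, $w^{-1}.(p\alpha+q\beta)\in Q_-$, which rules out an imaginary positive sum (such a sum would be mapped by $w^{-1}$ into $\Delta_+^{im}\subset Q_+$) and forces $p\alpha+q\beta\in\Phi_+$ with $w^{-1}$-image in $\Phi_-$, i.e.\ in $\Psi$; the complement is symmetric. Then Mathieu's normal form, applied with an order on $\Delta_+$ placing $\beta_1,\dots,\beta_k$ first, decomposes every $g\in\fU^{pma}(\cO)$ uniquely as $g=x_{\beta_1}(a_1)\cdots x_{\beta_k}(a_k)\cdot u'$ with $a_j\in\cO$ and $u'\in\fU_{\Delta_+\setminus\Psi}(\cO)$. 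Finally, for each $\alpha\in\Delta_+\setminus\Psi$, the root $\alpha$ is nonnegative on $\overline{w.C^v_f}$ (imaginary positive roots are nonnegative on the whole Tits cone, and if $\alpha\in\Phi_+$ satisfies $w^{-1}.\alpha\in\Phi_+$ then $\alpha\geq 0$ on $w.\overline{C^v_f}$), so a direct computation with the filter $w.C_0^+$ gives $f_{w.C_0^+}(\alpha)=0$ and $X_\alpha(\g_{\alpha,\Z}\otimes\cO)\subset U^{pm+}_{w.C_0^+}$. Hence $u'\in U^{pm+}_{w.C_0^+}$ indeed fixes $\tilde{w}.C_0^+$, and the claim follows.

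The main obstacle is justifying the product decomposition cleanly: one must exploit that $\fU_\Psi$ is independent of the chosen order on $\Delta_+$, so that an order placing the finite set $\Psi$ first is admissible, and that with such an order the Mathieu normal form separates as $\fU_\Psi(\cO)\cdot\fU_{\Delta_+\setminus\Psi}(\cO)$. An alternative route, avoiding this global decomposition, is induction on $k=\ell(w)$: writing $w=w'r_{i_k}$, one uses (MA III) to find an alcove $C'$ adjacent to $C$ with $\rho_{+\infty}(C')=w'.C_0^+$, applies the inductive hypothesis to $C'$, and then picks up the extra factor $x_{\beta_k}(a_k)$ by parameterizing the alcoves based at $0$ containing the shared panel of type $i_k$ via the root subgroup $U_{\beta_k}$, with the condition $a_k\in\cO$ expressing that $0$ remains the basepoint.
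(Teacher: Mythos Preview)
Your proof is correct and takes a genuinely different route from the paper. For the $(\Rightarrow)$ direction, the paper argues by induction on $k=\ell(w)$: it pulls back the gallery $C_0^+=C_0',\dots,C_k'=w.C_0^+$ in $\A$ to a gallery $C_0,\dots,C_k=C$ inside an apartment containing $C$ and $+\infty$, applies the inductive hypothesis to $C_{k-1}$, and then extracts the last factor $x_{\beta_k}(a_k)$ by a panel argument using the $W^v$-distance $\dw$ and the simple transitivity of $U_{\beta_k,0}$ on apartments containing a given half-apartment---essentially the alternative you sketch at the end. Your main argument bypasses the induction: you use Mathieu's normal form in $\fU^{pma}(\cO)$, with an order on $\Delta_+$ placing $\Psi=\{\beta_1,\dots,\beta_k\}$ first, to split $v\in U^{pm+}_0$ in one stroke as a $\Psi$-product times a remainder $u'\in\fU_{\Delta_+\setminus\Psi}(\cO)\cap G$, and then a short root calculation (imaginary positive roots are $W^v$-stably positive; real positive roots outside $\Psi$ are sent to $\Phi_+$ by $w^{-1}$) shows $u'\in U^{pm+}_{w.C_0^+}$. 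Your algebraic route is shorter and more conceptual once the order-independence of $\fU_\Psi$ and the closedness of both $\Psi$ and $\Delta_+\setminus\Psi$ are in hand; the paper's geometric route, by contrast, stays entirely within the masure axioms and the real-root parametrization of apartments around a panel, avoids any appeal to global decompositions in the completed group $\fU^{pma}$, and is closer in spirit to the classical Bruhat--Tits argument.
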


\begin{proof}
As $x_{\beta_1}(\cO).\ldots.x_{\beta_k}(\cO)$ fixes $0$, an element of $x_{\beta_1}(\cO)\ldots x_{\beta_k}(\cO).\tilde{r}_{i_1}\ldots \tilde{r}_{i_k}.C_0^+$ is a positive alcove based at $0$. The fact that $\beta_1,\ldots,\beta_k\in \Phi_+$ follows from \cite[1.3.14 Lemma]{kumar2002kac}. Thus $x_{\beta_1}(\cO).\ldots.x_{\beta_k}(\cO).\tilde{r}_{i_1}\ldots \tilde{r}_{i_k}.C_0^+\subset U^+.w.C_0^+$  and we have one implication. 

 We prove the reciprocal by induction on $\ell(w)$. Assume $w=1$. Then $\rho_{+\infty}(C)=C_0^+$. Let $A$ be an apartment containing $+\infty$ and $C$. Let $g\in G$ be such that $g.A=\A$ and $g$ fixes $A\cap \A$. Then $C=g^{-1}.C_0^+$, by definition of $\rho_{+\infty}$. Moreover, $A$ contains $0$ and $+\infty$ and thus it contains $\conv(0,+\infty)\supset C_0^+$. Therefore $C=C_0^+$ and the lemma is clear in this case. Assume now that $\ell(w)\geq 1$. 

Let $C_0'=C_0^+$, $C_1'=r_{i_1}.C_0^+$, $\ldots$, $C_k'=r_{i_1}\ldots r_{i_k}.C_0^+=C'$. Let $C$ be a positive alcove based at $0$ and such that $\rho_{+\infty}(C)=C_k'$. Let $A$ be an apartment containing $C$ and $+\infty$. Let $g\in G$ be such that $g.A=\A$ and $g$ fixes $A\cap \A$.  Set $C_i=g^{-1}.C_i'$, for $i\in \llbracket 0,k\rrbracket$. Then $g$ fixes $+\infty$ and hence $\rho_{+\infty}(x)=g.x$ for every $x\in \A$. Therefore $\rho_{+\infty}(C_i)=C_i'$, for $i\in \llbracket 0,k\rrbracket$. In particular, $\rho_{+\infty}(C_{k-1})=C_{k-1}'$. By induction, we may assume that there exist  $a_1,\ldots,a_{k-1}\in \cO$ such that  $C_{k-1}=u\tilde{v}.C_0^+$, where $u=x_{\beta_{1}}(a_1)\ldots x_{\beta_k}(a_{k-1})$ and $\tilde{v}=\tilde{r}_{i_1}\ldots \tilde{r}_{i_{k-1}}.$  Moreover we have \[\dw(C_{k-1},C_k)=\dw(C_{k-1}',C_k')=r_{i_k}=\dw\left(u^{-1}.C_{k-1},u^{-1}.C_k\right)=\dw\left(\tilde{v}.C_0^+,u^{-1}.C_k\right).\] 

Let $P$ be the panel common to $\tilde{v}.C_0^+$ and $u^{-1}.C_k$. Then $P\subset \beta_k^{-1}(\{0\})$. Let $D$ be the half-apartment delimited by $\beta_{k}^{-1}(\{0\})$ and containing $C_{k-1}$. Then as $\beta_{k}(C_{k-1})=r_{i_1}\ldots r_{i_{k-1}}.\alpha_{i_k}(r_{i_1}\ldots r_{i_{k-1}}.C_0^+)>0$, $D$ contains $+\infty$. By \cite[Proposition 2.9]{rousseau2011masures}, there exists an apartment $B$ containing $D$ and $u^{-1}.C_k$. Let $g'\in G$ be such that $g'.B=\A$ and $g'$ fixes $\A\cap B$. We have $g'.u^{-1}.C_k=\rho_{+\infty}(C_k)=C_k'$. By \cite[5.7 7)]{rousseau2016groupes}, $g'\in \fT(\cO) U_{\beta_k,0}$ and as $\fT(\cO)$ fixes $\A$, we can assume $g'\in U_{\beta_k,0}=x_{\beta_k}(\cO)$. Write $g'=x_{\beta_k}(-a_k)$, with $a_k\in \cO$. Then $C_k=u.x_{\beta_k}(a_k).C_k=x_{\beta_1}(a_1)\ldots x_{\beta_k}(a_k).\tilde{r}_{i_1}\ldots \tilde{r}_{i_k}.C_0^+$, which proves the lemma.
\end{proof}

\begin{Proposition}\label{p_ker_pi_k}
\begin{enumerate}
\item We have $U_{C_0^+}^{nm-}=U^-\cap \ker(\pi_\kk)$ and $U_{-C_0^+}^{pm+}=U^+\cap \ker(\pi_\kk)$.

\item We have $\ker(\pi_\kk)=\left(\ker(\pi_\kk)\cap U^+\right).\left(\ker(\pi_\kk)\cap U^-\right).\left(\ker(\pi_\kk)\cap \fT(\cO)\right).$

\item We have $\ker(\pi_\kk)\subset G_{C_0^+\cup C_0^-}$. 
\end{enumerate}  
\end{Proposition}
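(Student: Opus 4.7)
I would prove the three assertions in the order (1), (2), (3), with (1) providing the building blocks and (2) the decomposition that (3) then re-interprets geometrically.

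For (1), I treat $U^{nm-}_{C_0^+}=U^-\cap\ker\pi_\kk$; the $U^+$-identity is symmetric via $\pi_\kk^{pma}$. The inclusion $U^{nm-}_{C_0^+}\subseteq U^-\cap\ker\pi_\kk$ is computational: for $\alpha\in\Delta_-$ and any $S\in C_0^+$, the value $f_S(\alpha)=\sup_{x\in S}(-\alpha(x))$ is strictly positive, so $\cK_{\omega\geq f_S(\alpha)}\subset\fm$ and every element of $U^{ma-}_{C_0^+}$ has its $\fU^{ma-}$-normal-form coefficients in $\g_{\alpha,\Z}\otimes\fm$. Applying Proposition~\ref{p_explicit_morphism} to $\pi_\kk^{nma}$ and using that $\pi_\kk$ is the restriction of $\pi_\kk^{nma}$ to $\fG^{\min}(\cO)$ yields the inclusion; as a byproduct, $\pi_\kk(K_I)\subset B^+(\kk)$. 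For the reverse inclusion, I let $u\in U^-\cap\ker\pi_\kk\subset G_0$, consider the positive alcove $u.C_0^+$ at $0$, and let $w\in W^v$ with $\rho_{+\infty}(u.C_0^+)=w.C_0^+$. Lemma~\ref{l_fiber_retraction} writes $u=x_{\beta_1}(a_1)\cdots x_{\beta_k}(a_k)\,\tilde{w}\,h$ with $a_j\in\cO$, $\tilde{w}=\tilde{r}_{i_1}\cdots\tilde{r}_{i_k}$ a reduced lift of $w$, and $h\in K_I$. Applying $\pi_\kk$ in $\fG^{\min}(\kk)$ and using $\pi_\kk(h)\in B^+(\kk)$, the identity $\pi_\kk(u)=1$ rearranges to $\pi_\kk(\tilde{w})\in U^+(\kk)\cdot B^+(\kk)=B^+(\kk)$. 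Bruhat disjointness for $\fG^{\min}(\kk)$ then forces $w=1$, hence $\tilde{w}=1$ and $u=h\in K_I\cap U^-=U^{nm-}_{C_0^+}$.

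For (2), I decompose $g\in\ker\pi_\kk\subset G_0$ via Proposition~\ref{p_fixator_zero} as $g=u_+u_-\tilde{w}t$ with $u_\pm$ in the $U_0^{pm+}$, $U_0^{nm-}$ factors, $\tilde{w}=\tilde{r}_{i_1}\cdots\tilde{r}_{i_k}$ a reduced-word lift of $w=\nu^v(n)$, and $t\in\fT(\cO)$. Applying $\pi_\kk$ and rearranging places $\pi_\kk(\tilde{w})\pi_\kk(t)$ into $N(\kk)\cap U^-(\kk)U^+(\kk)$; by the Birkhoff decomposition $\fG^{\min}(\kk)=\bigsqcup_{v\in W^v}B^-(\kk)\,v\,B^+(\kk)$ this intersection lies in $T(\kk)$, forcing $w=1$ and hence $\tilde{w}=1$. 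Then $g=u_+u_-t$ with $\pi_\kk(u_+)\pi_\kk(u_-)\pi_\kk(t)=1$, and Lemma~\ref{l_uniqueness_decomposition_UUT} applied in $\fG^{\min}(\kk)$ forces each factor to lie in the corresponding kernel.

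For (3), I combine (1) and (2): write $g=u_+u_-t$ as in (2). The factor $t\in\fT(\cO)$ fixes all of $\A$. By (1), $u_-\in U^{nm-}_{C_0^+}$ fixes $C_0^+$, and symmetrically $u_+\in U^{pm+}_{C_0^-}$ fixes $C_0^-$. To promote each to a simultaneous fixator of both germs, I note that a filter computation parallel to (1) gives $f_{C_0^-}(\alpha)\leq 0$ for $\alpha\in\Delta_-$, so $U^{ma-}_{C_0^+}\subset U^{ma-}_{C_0^-}$ and $u_-$ fixes $C_0^-$ as well; symmetrically for $u_+$. Hence $g\in G_{C_0^+\cup C_0^-}$. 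The main obstacle is the reverse inclusion in (1): promoting the purely group-theoretic condition $\pi_\kk(u)=1$ to the geometric statement that $u$ fixes the entire local chamber $C_0^+$ (not merely $0$). The retraction and Lemma~\ref{l_fiber_retraction} reduce this to Bruhat disjointness in $\fG^{\min}(\kk)$, and the same Birkhoff-type reasoning then powers (2), after which (3) is a short comparison of filter data.
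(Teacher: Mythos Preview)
Your proposal is correct and uses essentially the same ingredients as the paper: the easy inclusion $U^{nm-}_{C_0^+}\subset\ker\pi_\kk$ via Proposition~\ref{p_explicit_morphism}, Lemma~\ref{l_fiber_retraction} to control the $W^v$-component, and a Bruhat/Birkhoff disjointness argument over $\kk$ to force $w=1$. The only difference is organizational: the paper runs the retraction argument once for a general $g\in\ker\pi_\kk$ (proving (2) first, then deducing (1) via Lemma~\ref{l_uniqueness_decomposition_UUT}), whereas you run it for $u\in U^-\cap\ker\pi_\kk$ to get (1) directly and then obtain (2) from the decomposition $G_0=U_0^{pm+}U_0^{nm-}N_0$ together with Birkhoff in $\fG^{\min}(\kk)$; both orderings work and the arguments for (3) are the same filter computation.
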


\begin{proof}
2) Let $u\in U_{C_0^+}^{nm-}$.   By definition, there exists $\Omega\in C_0^+$ such that $u\in U_{\Omega}^{nm-}$. Let $x\in C^v_f\cap \Omega$. Then   $u\in \prod_{\alpha\in \Delta_-} X_\alpha(\g_{\alpha,\Z}\otimes \cK_{\omega\geq -\alpha(x)})\cap G_0$. As $-\alpha(x)>0$ for every $\alpha\in \Delta_-$, Proposition~\ref{p_explicit_morphism} implies: \begin{equation}\label{e_relation_kerpi_UCO}
  U_{C_0^+}^{nm-}\subset \ker(\pi_{\kk}).
  \end{equation}

 Let $g\in \ker(\pi_\kk)\subset \fG^{\min}(\cO)$. Then $g$ fixes $0$ and $g.C_0^+$ is a positive alcove based at $0$. Write $\rho_{+\infty}(g.C_0^+)=w.C_0^+$, with $w\in W^v$. Write $w=r_{i_1}\ldots r_{i_m}$, with $m=\ell(w)$ and $i_1,\ldots,i_m\in I$. Let $\tilde{n}=\tilde{r}_{i_1}\ldots \tilde{r}_{i_m}\in \fN(\cK)$. By Lemma~\ref{l_fiber_retraction}, there exists $u\in \langle x_\beta(\cO)\mid \beta\in \Phi_+\rangle$ such that $g.C_0^+=u\tilde{n}.C_0^+$. Then $g=u\tilde{n}i$, with $i\in G_{C_0^+}$. As $C_0^+$ has a good fixator (\cite[5.7 2)]{rousseau2016groupes}), we have (by \eqref{e_good_fixator}: \[G_{C_0^+}=U_{C_0^+}^{pm+} U_{C_0^+}^{nm-} N_{C_0^+}.\] As every element of $C_0^+$ has non empty interior, $N_{C_0^+}=\fT(\cO)$.  Moreover, $U_{C_0^+}^{pm+}=U_0^{pm+}$ and $\cT(\cO)$ normalizes $U_{0}^{pm+}$ and $U_{C_0^+}^{nm-}$. Therefore, \[G_{C_0^+}=\fT(\cO).U_{0}^{pm+}.U_{C_0^+}^{nm-}.\] Write $i=tu_+ u_-$, with $t\in \fT(\cO)$, $u_+\in U_{0}^{pm+}$ and $u_-\in U_{C_0^+}^{nm-}$.

 Therefore by \eqref{e_relation_kerpi_UCO}, we have \[\pi_{\kk}(g)=1=\pi_{\kk}(u\tilde{n}tu_+u_-)=\pi_{\kk}(u)\pi_{\kk}(\tilde{n}t)\pi_{\kk}(u_+)\pi_{\kk}(u_-)=\pi_{\kk}(u)\pi_{\kk}(\tilde{n}t)\pi_{\kk}(u_+) .\]
 
 By \cite[3.16 Proposition]{rousseau2016groupes}  or  \cite[Theorem 8.118]{marquis2018introduction}, 

$(\fG^{pma}(\k),\fN(\kk),\fU^{pma}(\kk),\fU^{-}(\kk),\fT(\kk),\{r_i\mid i\in I\})$ is a refined Tits system. By \cite[3.16 Remarque]{rousseau2016groupes}, we have the Birkhoff decomposition \[\fG^{pma}(\kk)=\bigsqcup_{n\in \fN(\kk)} \fU^{+}(\kk) n \fU^{pma}(\kk).\]  As $\pi_{\kk}(u)\in \fU^+(\kk)$, $\pi_{\kk}(\tilde{n}t)\in \fN(\kk)$ and $\pi_{\kk}(u_+)\in \fU^{pma}(\kk)$, we deduce $\pi_{\kk}(\tilde{n}t)=1$.

By \cite[1.4 Lemme and 1.6]{rousseau2006immeubles}, there exists a group morphism $\nu^v_\kk:\fN(\kk)\rightarrow W^v$ such that $\nu^v_\kk(\tilde{r}_{i})=r_i$ for $i\in I$ and $\nu^v_\kk(\fT(\kk))=1$. Then $\nu_\kk^v(\tilde{n}t)=w=1$. Therefore $w=1$ and $g=utu_+u_-=u'tu_-$, for some $u'\in U_0^{pm+}$, since $t$  normalizes $U_0^{pm+}$. By Lemma~\ref{l_uniqueness_decomposition_UUT} and by symmetry of the roles of $U^{nm-}$ and $U^{pm+}$, we have $u'\in U_{-C_0^+}^{pm+}$. By \eqref{e_relation_kerpi_UCO} applied to $U_{-C_0^+}^{pm+}$, we have $\pi_\kk(g)=1=\pi_\kk(u')\pi_{\kk}(t)\pi_\kk(u-)=\pi_\kk(t)$ and thus \[g\in U_{-C_0^-}^{pm+}. (T\cap \ker \pi_\kk). U_{C_0^+}^{nm-}= U_{-C_0^-}^{pm+}.U_{C_0^+}^{nm-}.(T\cap \ker \pi_\kk).\]  
By \eqref{e_relation_kerpi_UCO}, we deduce 2). 

3) We have $U_{-C_0^{+}}^{pm+}=U_{-C_0^+\cup C^v_f}^{pm+}\subset G_{C_0^+\cup -C_0^+}$, $U_{C_0^+}^{nm-}=U_{C_0^+\cup -C^v_f}^{nm-}\subset G_{C_0^+\cup-C_0^+}$ and $T\cap \ker \pi_\kk \subset T\cap G_0\subset G_\A$, which proves 3). 

1) We already proved one inclusion. Let $u\in \ker \pi_\kk\cap U^-$. Then by what we proved above, $u\in U_{-C_0^-}^{pm+}. (T\cap \ker \pi_\kk). U_{C_0^+}^{nm-}$. By Lemma~\ref{l_uniqueness_decomposition_UUT}, $u\in U_{C_0^+}^{nm-}$, and the proposition follows. 
\end{proof}

\begin{Corollary}\label{c_ker_pin}
Let $n\in \N^*$. Then $\ker \pi_n\subset U_{-C_0^+}^{pm+} .\fT(\cO).U_{C_0^+}^{nm-}$.
\end{Corollary}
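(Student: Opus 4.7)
The plan is to deduce this directly from Proposition~\ref{p_ker_pi_k} by observing that $\pi_\kk$ factors through $\pi_n$. Since $\omega(\qp^n) = n \geq 1 > 0$, we have $\qp^n \cO \subset \fm$, so the canonical projection $\cO \twoheadrightarrow \kk$ factors as $\cO \twoheadrightarrow \cO/\qp^n\cO \twoheadrightarrow \kk$. By functoriality of $\fG^{\min}$, this gives a factorization $\pi_\kk = \bar\pi_n \circ \pi_n$ where $\bar\pi_n \colon \fG^{\min}(\cO/\qp^n\cO) \to \fG^{\min}(\kk)$ is the induced morphism. In particular $\ker \pi_n \subset \ker \pi_\kk$.

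Next I apply Proposition~\ref{p_ker_pi_k}(1)--(2), which gives
\[
\ker \pi_\kk = U_{-C_0^+}^{pm+} \cdot U_{C_0^+}^{nm-} \cdot \bigl(\fT(\cO) \cap \ker \pi_\kk\bigr) \subset U_{-C_0^+}^{pm+} \cdot U_{C_0^+}^{nm-} \cdot \fT(\cO).
\]
The only remaining step is to swap the order of the middle two factors, turning $U_{C_0^+}^{nm-} \cdot \fT(\cO)$ into $\fT(\cO) \cdot U_{C_0^+}^{nm-}$.

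To do this, I use Lemma~\ref{l_action_T_U}: for any $t \in T$, one has $t\, U_{C_0^+}^{nm-}\, t^{-1} = U_{t.C_0^+}^{nm-}$. Now if $t \in \fT(\cO)$, then for every character $\chi \in X$ we have $\chi(t) \in \cO^\times$, so $\omega(\chi(t)) = 0$; by the definition of $\nu$ recalled in the section on the action of $N$ on $\A$, this means $\nu(t)$ is the trivial translation, i.e.\ $\fT(\cO)$ acts trivially on $\A$. In particular $t.C_0^+ = C_0^+$, hence $\fT(\cO)$ normalizes $U_{C_0^+}^{nm-}$. Therefore
\[
U_{-C_0^+}^{pm+} \cdot U_{C_0^+}^{nm-} \cdot \fT(\cO) = U_{-C_0^+}^{pm+} \cdot \fT(\cO) \cdot U_{C_0^+}^{nm-},
\]
which combined with $\ker \pi_n \subset \ker \pi_\kk$ yields the corollary.

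There is no real obstacle here: the content of the corollary is entirely bundled into Proposition~\ref{p_ker_pi_k}, and all that remains is a formal reduction (factoring $\pi_\kk$ through $\pi_n$) together with the trivial observation that $\fT(\cO)$ acts trivially on $\A$, which lets us commute it across $U_{C_0^+}^{nm-}$.
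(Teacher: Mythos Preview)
Your proof is correct and is precisely the argument the paper intends: the corollary is stated without proof immediately after Proposition~\ref{p_ker_pi_k}, and your deduction via $\ker\pi_n\subset\ker\pi_\kk$ together with the normalization of $U_{C_0^+}^{nm-}$ by $\fT(\cO)$ is exactly what is implicit there. (In fact the order $U_{-C_0^+}^{pm+}\cdot(T\cap\ker\pi_\kk)\cdot U_{C_0^+}^{nm-}$ with $T$ in the middle already appears inside the proof of Proposition~\ref{p_ker_pi_k}, so the final swap you perform is even more immediate if one reads that proof.)
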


\begin{Remark}
Let $u\in U^-\cap \ker \pi_\kk=U_{C_0}^{nm-}$. Write $u=\prod_{\alpha\in \Delta_-}X_\alpha(v_\alpha)$, where $v_\alpha\in \g_{\alpha,\Z}\otimes \cO$, for every $\alpha\in \Delta_-$. Define $\omega:\g_{\alpha,\Z}\otimes \cK\rightarrow \R\cup \{+\infty\}$ by $\omega(v)=\inf \{x\in \R\mid v\in \g_{\alpha,\Z}\otimes \cK_{\omega\geq x}\}$, for $v\in \g_{\alpha,\Z}\otimes \cK$. Let $\lambda\in Y$ be such that $\alpha_i(\lambda)=1$ for every $i\in I$. Let $\Omega\in C_0^+$ be such that $u\in U_\Omega^{nm-}$ and  $\eta\in \R^*_+$ be such $\eta\lambda\in \Omega$. Then $u\in U_{\Omega}^{nm-}$ implies $\omega(v_\alpha)\geq |\alpha(\eta \lambda)|=\eta \htt(\alpha)$ for every $\alpha\in \Delta_-$. In particular, $\omega(v_\alpha)$ goes to $+\infty$ when $-\htt(\alpha)$ goes to $+\infty$.
\end{Remark}

\section{Definition of topologies on $G$}\label{s_Def_topologies}

In this section, we define two topologies $\sT$ and $\sT_{\Fix}$ on $G$ and compare them. For the first one, we proceed as follows. We define a set $\cV_\lambda$ for every regular $\lambda\in Y^+$. We prove that it is actually a subgroup of $\fG^{\min}(\cO)$ (Lemma~\ref{l_decomposition_Vlambda}) and we define $\sT$ as the topology associated with $(\cV_{n\lambda})_{n\in \N^*}$.  We then prove that $\sT$ does not depend on the choice of $\lambda$ and that it is conjugation-invariant (Theorem~\ref{t_conjugation_invariance}) and thus that $(G,\sT)$ is a topological group.  We then introduce the topology $\sT_\Fix$ associated with the fixators of finite subsets of $\I$ and we end up by a comparison of $\sT$ and $\sT_\Fix$. 

\subsection{Subgroup $\cV_{\lambda}$}

For $n\in \N^*$, we set $T_n=\ker \pi_n\cap T\subset \fT(\cO)$\index{T@$T_n$}. For $\lambda\in Y^+$ regular, we set \[N(\lambda)=\min \{|\alpha(\lambda)|\mid \alpha\in \Phi\}\in \N^*\text{ and }\cV_{\lambda}=U_{[-\lambda,\lambda]}^{pm+}. U_{[-\lambda,\lambda]}^{nm-}.T_{2N(\lambda)}.\]\index{V@$\cV_{\lambda}$}\index{N@$N(\lambda)$} By \eqref{e_UOmega_fixators}, we have \begin{equation}\label{e_cV_as_fixators}
\cV_{\lambda}=(U^+\cap G_{[-\lambda,\lambda]}).(U^-\cap G_{[-\lambda,\lambda]}). (T\cap \ker \pi_{2N(\lambda)})\subset G_{[-\lambda,\lambda]}.
\end{equation}

The $2N(\lambda)$ appearing comes from $x_{-\alpha}(\qp^n \cO).x_{\alpha}(\qp^n \cO)\subset x_\alpha(\qp^n \cO) x_{-\alpha}(\qp^n \cO) \alpha^\vee(1+\qp^{2n}\cO)$ for $\alpha\in \Phi$ and $n\in \N^*$, which follows from \eqref{e_Commutation_relation}. To prove that $\cV_\lambda$ is a group, the main difficulty is to prove that it is stable by left multiplication by $U_{[-\lambda,\lambda]}^{nm-}$. If $G$ is reductive, we have  $U_{[-\lambda,\lambda]}^{nm-}=U_{[-\lambda,\lambda]}^{--}:=\langle x_\alpha(a)\mid \alpha\in \Phi_-,a\in \cK, \omega(a) \geq |\alpha(\lambda)|\rangle$. By induction,  it then suffices to prove that $x_{-\alpha}(\qp^{|\alpha(\lambda)|} \cO) \cV_\lambda\subset \cV_\lambda$, for  $\alpha\in \Phi_+$.  When $G$ is no longer reductive, we have $U_{[-\lambda,\lambda]}^{--}\subsetneq U_{[-\lambda,\lambda]}^{nm-}$ in general (see \cite[4.12 3)]{rousseau2016groupes}). The group $U_{[-\lambda,\lambda]}^{nm-}$ is defined as a set of infinite products and so its seems difficult to reason by induction in our case. We could try to use the group $U_{[-\lambda,\lambda]}^{-}:=\langle x_\alpha(a)\mid \alpha\in \Phi,a\in \cK, \omega\geq |\alpha(a)|\rangle\cap U^-$ since it is sometimes equal to $U_{[-\lambda,\lambda]}^{nm-}$ (for example when $\lambda\in C^v_f$, $U_{[-\lambda,\lambda]}^{nm-}=U_{-\lambda}^{nm-}=U_{\lambda}^{-}=U_{[-\lambda,\lambda]}^{-}$ by \cite[2.4.1 2)]{bardy2022twin}). However it seems difficult since if $\alpha\in \Phi_+$, the condition $\omega(a)+\alpha(\lambda)\geq 0$ allows elements with a negative valuation. In order to overcome these difficulties, we use the morphisms $\pi_{n}$, for $n\in \N$.

By definition, \[U^{pm+}_{[-\lambda,\lambda]}=G\cap \prod_{\alpha\in \Delta_+} X_\alpha \left(\g_{\alpha,\Z}\otimes \cK_{\omega\geq f_\Omega([-\lambda,\lambda])}\right)\subset G\cap \prod_{\alpha\in \Delta_+} X_\alpha \left(\g_{\alpha,\Z}\otimes \qp^{N(\lambda)}\cO \right) .\]
 
 By Proposition~\ref{p_explicit_morphism} we deduce $U_{[-\lambda,\lambda]}^{pm+}\subset \ker \left(\pi_{N(\lambda)}\right)$. Using a similar reasoning for $U_{[-\lambda,\lambda]}^{nm-}$ we deduce  \begin{equation}\label{e_inclusion_VnLmabda_ker_Pi_nLambda}
 \cV_{\lambda}\subset \ker(\pi_{N(\lambda)}). 
 \end{equation}

For $n\in \N$ and $\alpha^\vee\in \Phi^\vee$, one sets $T_{\alpha^\vee,n}=\alpha^\vee(1+\qp^n \cO)\subset \fT(\cO)$\index{T@$T_{\alpha^\vee,n}$}. 

\begin{Lemma}\label{l_refinement_GR08_L3.3}
Let $\lambda\in Y^+$ be regular and $\alpha\in \Phi$. Then $U_{\alpha,[-\lambda,\lambda]}.U_{-\alpha,[-\lambda,\lambda]}.T_{\alpha^\vee,2N(\lambda)}$ is a subgroup of $G$. 
\end{Lemma}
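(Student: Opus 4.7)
The plan is to verify directly that $H := U_{\alpha,[-\lambda,\lambda]}\cdot U_{-\alpha,[-\lambda,\lambda]}\cdot T_{\alpha^\vee,2N(\lambda)}$ is closed under both multiplication and inversion. Each factor is already a subgroup (the unipotent factors via the isomorphism $x_{\pm\alpha}$ with $(\cO,+)$ restricted to an ideal, the torus factor by definition), so only the mixing of factors needs to be controlled. The two tools I would use are the conjugation formula \eqref{e_KMT4} and the rank-one commutation relation \eqref{e_Commutation_relation}, keeping careful track of valuations.

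First I would check that $T_{\alpha^\vee,2N(\lambda)}$ normalizes both $U_{\alpha,[-\lambda,\lambda]}$ and $U_{-\alpha,[-\lambda,\lambda]}$. By \eqref{e_KMT4}, for $s\in 1+\qp^{2N(\lambda)}\cO$ and any $a\in\cK$,
\[
\alpha^\vee(s)\, x_{\pm\alpha}(a)\, \alpha^\vee(s)^{-1}=x_{\pm\alpha}(s^{\pm 2}a).
\]
Since $s^{\pm 2}\in 1+\qp^{2N(\lambda)}\cO\subset \cO^\times$, we have $\omega(s^{\pm 2}a)=\omega(a)$, so the valuation constraint $\omega(a)\ge|\alpha(\lambda)|$ is preserved. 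Next, I would establish the key ``swap'' inclusion
\[
U_{-\alpha,[-\lambda,\lambda]}\cdot U_{\alpha,[-\lambda,\lambda]}\subset U_{\alpha,[-\lambda,\lambda]}\cdot T_{\alpha^\vee,2N(\lambda)}\cdot U_{-\alpha,[-\lambda,\lambda]}.
\]
For $a,b\in \cK$ with $\omega(a),\omega(b)\ge|\alpha(\lambda)|$, we have $\omega(ab)\ge 2|\alpha(\lambda)|\ge 2N(\lambda)$, so $1+ab\in 1+\qp^{2N(\lambda)}\cO\subset\cO^\times$ (in particular $ab\neq-1$) and $(1+ab)^{-1}\in 1+\qp^{2N(\lambda)}\cO$. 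Then \eqref{e_Commutation_relation} gives
\[
x_{-\alpha}(b)x_\alpha(a)=x_\alpha(a(1+ab)^{-1})\,\alpha^\vee(1+ab)\,x_{-\alpha}(b(1+ab)^{-1}),
\]
and each factor lies in the appropriate piece of $H$: the outer unipotent terms have valuation at least $|\alpha(\lambda)|$, and $\alpha^\vee(1+ab)\in T_{\alpha^\vee,2N(\lambda)}$.

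With these two facts, closure under multiplication follows by a routine rearrangement. Given $(u_1v_1t_1)(u_2v_2t_2)\in H\cdot H$, I first conjugate $u_2$ and $v_2$ past $t_1$ using the normalization step to collect the torus element on the right, reducing to an expression of the form $u_1\,v_1u_2'\,v_2'\,t$. Applying the swap relation to $v_1u_2'$ produces an element of $U_\alpha\cdot T_{\alpha^\vee,2N(\lambda)}\cdot U_{-\alpha}$, and one further pass of the torus through the remaining $U_{-\alpha}$ factors brings everything into the canonical $U_\alpha U_{-\alpha} T$ form. For inversion, $(uvt)^{-1}=t^{-1}v^{-1}u^{-1}$ is rewritten analogously: conjugate $v^{-1}$ and $u^{-1}$ past $t^{-1}$, then swap the remaining $U_{-\alpha}U_\alpha$ pair using the same relation.

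The main obstacle (and the only thing that forces the specific choice of exponent $2N(\lambda)$) is ensuring that the $\alpha^\vee(1+ab)$ factor produced by commutation lies in $T_{\alpha^\vee,2N(\lambda)}$: this requires $\omega(ab)\ge 2N(\lambda)$, which is exactly guaranteed by the inequality $\omega(a),\omega(b)\ge|\alpha(\lambda)|\ge N(\lambda)$ coming from $\lambda$ being regular. Everything else (conjugation by the torus, closure of $1+\qp^{2N(\lambda)}\cO$ under inverses and squaring) is a direct valuation check.
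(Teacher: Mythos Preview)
Your proof is correct and follows essentially the same approach as the paper: both arguments rest on (i) the fact that $T_{\alpha^\vee,2N(\lambda)}$ normalizes $U_{\pm\alpha,[-\lambda,\lambda]}$ via \eqref{e_KMT4}, and (ii) the rank-one commutation relation \eqref{e_Commutation_relation} to swap a $U_{-\alpha}$ factor past a $U_\alpha$ factor, with the valuation estimate $\omega(ab)\ge 2|\alpha(\lambda)|\ge 2N(\lambda)$ forcing the resulting torus element into $T_{\alpha^\vee,2N(\lambda)}$. The only difference is organizational: the paper observes that it suffices to check stability of $H$ under \emph{left} multiplication by each of the three factors (which immediately gives $HH\subset H$ and $H^{-1}=C^{-1}B^{-1}A^{-1}=CBA\subset H$), whereas you verify closure under multiplication and inversion directly; the underlying computations are identical.
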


\begin{proof}
Set $\Omega=[-\lambda,\lambda]$.  Set $H=U_{\alpha,\Omega}. U_{-\alpha,\Omega}.T_{\alpha^\vee,2N(\lambda)}$. It suffices to prove that $H$ is stable under left multiplication by $U_{\alpha,\Omega}, U_{-\alpha,\Omega}$ and  $T_{\alpha^\vee,2N(\lambda)}$. The first stability is clear and the third follows from Lemma~\ref{l_action_T_U} and the fact that $T_{\alpha^\vee,2N(\lambda)}\subset \fT(\cO)$ fixes $\A$.  Let $u_-,\tilde{u_-}\in U_{-\alpha,\Omega}$, $u_+\in U_{\alpha,\Omega}$ and $t\in T_{\alpha^\vee,2N(\lambda)}$. Write $u_-=x_{-\alpha}(a_-)$, $\tilde{u}_-=x_{-\alpha}(\tilde{a}_-)$ and $u_+=x_\alpha(a_+)$, for $a_-,\tilde{a}_-,a_+\in \cK$. We have $\omega(\tilde{a}_-),\omega(a_-),\omega(a_+)\geq |\alpha(\lambda)|\geq N(\lambda)$.  Then by \eqref{e_Commutation_relation}, we have \[\tilde{u}_- u_+ u_-t=x_\alpha\left(a_+(1+\tilde{a}_-a_+)^{-1}\right) x_{-\alpha}\left(\tilde{a}_-(1+\tilde{a}_- a_+)^{-1}+a_-\right)\alpha^\vee(1+\tilde{a}_- a_+)t.\] As $\omega(1+\tilde{a}_-a_+)=1$, we deduce that $\tilde{u}_- u_+ u_-t\in H$, which proves the lemma. 
\end{proof}

Let $w\in W^v$ and $\Omega$ be a filter on $\A$. Recall that $\fG^{pma,w}$ and $\fG^{nma,w}$ are the  completions of $\fG$  with respect to $w.\Delta_+$ and $w.\Delta_-$ respectively. One defines $U_{\Omega}^{pm}(w.\Delta_+)$\index{U@$U_{\Omega}^{pm}(w.\Delta_+), U_{\Omega}^{nm}(w.\Delta_-)$} and $U_{\Omega}^{nm}(w.\Delta_-)$ similarly as $U_{\Omega}^{pm+}$ and $U_{\Omega}^{nm-}$ in these groups. 

\begin{Lemma}\label{l_decomposition_Vlambda}
Let $\lambda\in Y^+$ be regular.   Then \begin{enumerate}
\item $\cV_{\lambda}=U_{[-\lambda,\lambda]}^{pm}(w.\Delta_+). U_{[\lambda,\lambda]}^{nm}(w.\Delta_-).T_{2N(\lambda)}$ for every $w\in W^v$,

\item $\cV_\lambda$ is a subgroup of $G$.
\end{enumerate}
\end{Lemma}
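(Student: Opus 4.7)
My plan is to prove (1) by induction on $\ell(w)$, and then derive (2) from (1). The two key tools are the $\mathrm{SL}_2$-type commutation relation \eqref{e_Commutation_relation} (as packaged in Lemma~\ref{l_refinement_GR08_L3.3}) and the fact that $W^v$ permutes $\Delta^{im}_+$ setwise while acting on $\Delta^{re}_+$ through the single-pair swap $\alpha_i \leftrightarrow -\alpha_i$ composed with a permutation of $\Delta^{re}_+ \setminus \{\alpha_i\}$.

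For (1), the base case $w = 1$ is the definition of $\cV_\lambda$. For the inductive step from $w$ to $wr_i$ (with $\ell(wr_i) = \ell(w) + 1$), a direct computation gives
\[(wr_i).\Delta_+ = w.(r_i.\Delta_+) = (w.\Delta_+ \setminus \{\gamma\}) \cup \{-\gamma\},\]
where $\gamma := w.\alpha_i \in \Phi$. Thus $w.\Delta_+$ and $(wr_i).\Delta_+$ differ precisely by swapping the single real-root pair $\{\gamma, -\gamma\}$. The inductive step therefore reduces to moving the single factor $U_{\gamma,[-\lambda,\lambda]}$ from one side of the decomposition of $\cV_\lambda$ to the other, modulo an element of $T_{2N(\lambda)}$: for $a, b \in \cK_{\omega \geq |\gamma(\lambda)|}$, \eqref{e_Commutation_relation} gives
\[x_{-\gamma}(b)\, x_\gamma(a) = x_\gamma\bigl(a(1+ab)^{-1}\bigr)\, \gamma^\vee(1+ab)\, x_{-\gamma}\bigl(b(1+ab)^{-1}\bigr),\]
with $\omega(ab) \geq 2N(\lambda)$, so $\gamma^\vee(1+ab) \in T_{\gamma^\vee, 2N(\lambda)} \subset T_{2N(\lambda)}$. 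This is the essence of Lemma~\ref{l_refinement_GR08_L3.3}.

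The main technical obstacle is that this local swap must be compatible with the infinite products over all other roots in $w.\Delta_\pm$, which include all imaginary roots and all real roots other than $\pm\gamma$. For this I would invoke the uniqueness of the normal form \eqref{e_normal_form_Upma}, the fact that $T_{2N(\lambda)}$ normalizes every factor $X_\beta(\g_{\beta,\Z} \otimes \cK_{\omega \geq f_{[-\lambda,\lambda]}(\beta)})$ (by \eqref{e_KMT4} and Lemma~\ref{l_action_T_U}), and Proposition~\ref{p_explicit_morphism} to translate between the completions $\fG^{pma,w}$ and $\fG^{pma,wr_i}$. Lemma~\ref{l_uniqueness_decomposition_UUT} then ensures uniqueness of the resulting $U^+U^-T$ decomposition at each stage.

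For (2), given (1), each of $U^{pm+}_{[-\lambda,\lambda]}(w.\Delta_+)$ and $U^{nm-}_{[-\lambda,\lambda]}(w.\Delta_-)$ is a subgroup of $G$. For any $x_{-\alpha}(c)$ with $\alpha \in \Phi_+$ and $\omega(c) \geq |\alpha(\lambda)|$, one can choose $w \in W^v$ so that $-\alpha \in w.\Delta_+$, placing $x_{-\alpha}(c)$ inside $U^{pm+}_{[-\lambda,\lambda]}(w.\Delta_+)$; left multiplication of the $w$-twisted decomposition of $\cV_\lambda$ provided by (1) by $x_{-\alpha}(c)$ then preserves $\cV_\lambda$. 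Iterating over the real-root factors of any element of $U^{nm-}_{[-\lambda,\lambda]}$ and using the normalizing action of $T_{2N(\lambda)}$ on the imaginary-root factors (which are unaffected by $W^v$), we obtain $\cV_\lambda \cdot \cV_\lambda \subset \cV_\lambda$. Closure under inversion follows from $(u^+u^- t)^{-1} = t^{-1}(u^-)^{-1}(u^+)^{-1}$ together with Lemma~\ref{l_action_T_U}. Hence $\cV_\lambda$ is a subgroup of $G$, and in fact of $\fG^{\min}(\cO)$ in view of \eqref{e_inclusion_VnLmabda_ker_Pi_nLambda}.
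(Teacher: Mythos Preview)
Your approach to (1) is essentially the paper's: both argue by induction on $\ell(w)$, reducing to a single simple reflection and using the factorization $U_\Omega^{pm}(\Delta_+)=U_\Omega^{pm}(\Delta_+\setminus\{\alpha\})\,U_{\alpha,\Omega}$ (this is \cite[3.3.4)]{gaussent2008kac}) together with Lemma~\ref{l_refinement_GR08_L3.3}. Your sketch of the ``technical obstacle'' is vague, but the ingredients you name are the right ones.

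Your argument for (2), however, has a genuine gap, and it is exactly the gap the paper warns about just before Lemma~\ref{l_refinement_GR08_L3.3}. You propose to show $\tilde u_-\,\cV_\lambda\subset\cV_\lambda$ for $\tilde u_-\in U_{[-\lambda,\lambda]}^{nm-}$ by ``iterating over the real-root factors'' of $\tilde u_-$ and handling imaginary roots via the normalizing action of $T_{2N(\lambda)}$. But $U_{[-\lambda,\lambda]}^{nm-}$ is \emph{not} generated by its real-root subgroups: in the non-reductive case one has $U_{[-\lambda,\lambda]}^{--}\subsetneq U_{[-\lambda,\lambda]}^{nm-}$ (see \cite[4.12.3)]{rousseau2016groupes}), and a general element is an infinite product $\prod_{\alpha\in\Delta_-}X_\alpha(c_\alpha)$ involving imaginary-root factors that do not lie in any $W^v$-twist of $\Delta_+$ and cannot be ``peeled off'' one at a time. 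The sentence about $T_{2N(\lambda)}$ normalizing imaginary-root factors is a red herring: the problem is commuting $\tilde u_-$ past $u_+$, not past $t$.

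The paper's fix is structural rather than generator-by-generator. One picks $w$ with $\lambda\in w.C^v_f$, translates by $t\in T$ with $t.0=\lambda$ so that $t\cV_\lambda t^{-1}=U_0^{pm}(w.\Delta_+)\,U_{2\lambda}^{nm}(w.\Delta_-)\,T_{2N(\lambda)}$, and then exploits two facts: (i) $G_{[0,2\lambda]}=U_0^{pm}(w.\Delta_+)\,U_{2\lambda}^{nm}(w.\Delta_-)\,\fT(\cO)$ is already known to be a group (good fixator), so $u_+^{-1}\tilde u_- u_+$ admits a $U^+U^-T$ decomposition inside $G_{[0,2\lambda]}$; and (ii) $U_{2\lambda}^{nm}(w.\Delta_-)\subset\ker\pi_{2N(\lambda)}$, so applying $\pi_{2N(\lambda)}$ to that decomposition and using $\fB(\cO/\qp^{2N(\lambda)}\cO)=\fT\ltimes\fU^{pma,w}$ forces the torus part into $T_{2N(\lambda)}$. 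This is the ``use of the morphisms $\pi_n$'' announced before the lemma, and it is what your proposal is missing.
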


\begin{proof}
(1) This follows the proof of \cite[Proposition 3.4]{gaussent2008kac}. Set $\Omega=[-\lambda,\lambda]$. Let $i\in I$ and $\alpha=\alpha_i$. By \cite[3.3.4)]{gaussent2008kac} and Lemma~\ref{l_refinement_GR08_L3.3}, \[\begin{aligned}\cV_\lambda&=U_{\Omega}^{pm}(\Delta_+\setminus \{\alpha\}).U_{\Omega}^{nm}(\Delta_-\setminus\{\alpha\}).U_{\alpha,\Omega} .U_{-\alpha,\Omega} .T_{2N(\lambda)}\\ &=U_{\Omega}^{pm}(\Delta_+\setminus \{\alpha\}).U_{\Omega}^{nm}(\Delta_-\setminus\{\alpha\}).U_{-\alpha,\Omega} .U_{\alpha,\Omega}. T_{2N(\lambda)}\\
&= U_{\Omega}^{pm}(\Delta_+\setminus \{\alpha\}).U_{-\alpha,\Omega}.U_{\Omega}^{nm}(\Delta_-\setminus\{\alpha\}).U_{\alpha,\Omega}.  T_{2N(\lambda)}\\
&=U_{\Omega}^{pm}(r_i.\Delta_+).U_{\Omega}^{nm}(r_i.\Delta_-).T_{2N(\lambda)} \end{aligned}\] Therefore $\cV_\lambda$ does not change when $\Delta_+$ is replaced by $w.\Delta_+$, for $w\in W^v$, which proves (1).

Let $w\in W^v$ be such that $\lambda\in w.C^v_f$. By (1) we have \[\cV_\lambda=U_{\Omega}^{pm}(w.\Delta_+).U_{\Omega}^{nm}(w.\Delta_-).T_{2N(\lambda)}.\] Let $t\in T$ be such that $t.0=\lambda$. By Lemma~\ref{l_action_T_U}, we have  \[t U_{\Omega}^{pm}(w.\Delta_+) t^{-1}=U_{t.\Omega}^{pm}(w.\Delta_+)=U_{[0,2\lambda]}^{pm}(w.\Delta_+)=U_0^{pm}(w.\Delta_+).\] Similarly, $t U_{\Omega}^{nm}(w.\Delta_-) t^{-1}=U_{2\lambda}^{nm}(w.\Delta_-)$. As $T$ is commutative, we also have $t  T_{2N(\lambda)} t^{-1}=T_{2N(\lambda)}$. In order to prove (2), it suffices to prove that \[H:=t U_{\Omega}^{pm}(w.\Delta_+) U_{\Omega}^{nm}(w.\Delta_-) T_{2N(\lambda)} t^{-1}=U_{0}^{pm}(w.\Delta_+) U_{2\lambda}^{nm}(w.\Delta_-) T_{2N(\lambda)}\] is a subgroup of $G$. It suffices to prove that $H$ is stable under left multiplication by $U_{0}^{pm}(w.\Delta_+)$, $U_{2\lambda}^{nm}(w.\Delta_-)$ and $T_{2N(\lambda)}$. The first stability is clear and the third follows from Lemma~\ref{l_action_T_U}.  

First note that by \cite[5.7 1)]{rousseau2016groupes}, \begin{equation}\label{e_fixator_G_02n}
G_{[0,2\lambda]}=U_{0}^{pm}(w.\Delta_+) U_{2\lambda}^{nm}(w.\Delta_-) \fT(\cO)
\end{equation} is a subgroup of $G$.

Let $u_-,\tilde{u_-}\in U_{2\lambda}^{nm}(w.\Delta_-)$, $u_+\in U_{0}^{pm}(w.\Delta_+)$, $t\in T_{2N(\lambda)}$. Let us prove that $\tilde{u}_- u_+ u_- t\in H$. We have $\tilde{u}_- u_+=u_+ (u_+^{-1}\tilde{u}_- u_+)$. By Proposition~\ref{p_explicit_morphism}, $U_{2\lambda}^{nm}(w.\Delta_-)\subset \ker \pi_{2N(\lambda)}$.  By Proposition~\ref{p_fixator_zero}, $u_+\in \fG^{\min}(\cO)$  and we have $u_+^{-1}\tilde{u}_- u_+\in G_{[0,2\lambda]}\cap \ker \pi_{2N(\lambda)}$. Therefore \eqref{e_fixator_G_02n} implies that we can  write $u_+^{-1}\tilde{u}_- u_+=u_1^+ u_1^- t_1$, where $u_1^+\in U_0^{pm}(w.\Delta_+)$, $u_1^-\in U_{2\lambda}^{nm}(w.\Delta_-)$ and $t_1\in \fT(\cO)$. We have \[\pi_{2N(\lambda)}(u_1^+ u_1^- t_1)=1=\pi_{2N(\lambda)}(u_1^+ t_1)=\pi_{2N(\lambda)} (u_1^+)\pi_{2N(\lambda)}(t_1).\] As $\fB(\cO/\qp^{2N(\lambda)}\cO)=\fT(\cO/\qp^{2N(\lambda)}\cO)\ltimes \fU^{pma,w}(\cO/\qp^{2N(\lambda)}\cO)$ and as $\pi_{2N(\lambda)}\left(\fT(\cO)\right)\subset \fT(\cO/\qp^{2N(\lambda)} \cO)$ and $\pi_{2N(\lambda)}\left(\fU^{pma,w}(\cO)\right)\subset \fU^{pma,w}(\cO/\qp^{2N(\lambda)} \cO)$, we deduce $\pi_{2N(\lambda)}(u_1^+)=\pi_{2N(\lambda)}(t_1)=1$ and $t_1\in T_{2N(\lambda)}$.  We have \[\tilde{u}_- u_+u_- t=u_+u_1^+ u_1^-t_1 u_- t\] and as $T_{2N(\lambda)}$ normalizes $U_{2\lambda}^{nm}(w.\Delta_-)$, $\tilde{u}_- u_+u_- t\in H$, which proves the lemma. 

\end{proof}

\begin{Remark}
\begin{enumerate}
\item Note that if $\lambda\in Y^+$ is regular, then  $U_{[-\lambda,\lambda]}^{pm+} U_{[-\lambda,\lambda]}^{nm-} T_{2N(\lambda)+1}$ is not a subgroup of $G$. Indeed, take $\alpha\in \Phi^+$ such that $|\alpha(\lambda)|=N(\lambda)$. Then $x_{-\alpha}(\qp^{N(\lambda)})x_{\alpha}(\qp^{N(\lambda)})=x_\alpha(\qp^{N(\lambda)}(1+\qp^{2N(\lambda)})^{-1})x_{-\alpha}(\qp^{N(\lambda)}(1+\qp^{2N(\lambda)})^{-1})\alpha^{\vee}(1+\qp^{2N(\lambda)})$ and by Lemma~\ref{l_uniqueness_decomposition_UUT}, this does not belong to $U_{[-\lambda,\lambda]}^{pm+} U_{[-\lambda,\lambda]}^{nm-} T_{2N(\lambda)+1}$.

\item For every $k\in \llbracket 0,2N(\lambda)\rrbracket$, $U_{[-\lambda,\lambda]}^{pm+}.U_{[-\lambda,\lambda]}^{nm-}.T_k=\cV_\lambda.T_k$ is  a subgroup of $G$, since $T_k$ normalizes $U_{[-\lambda,\lambda]}^{pm+}$ and $U_{[-\lambda,\lambda]}^{nm-}$ by Lemma~\ref{l_action_T_U}. Note that for  the definition of a	 topology, we could also have taken the filtration $(U_{[-n\lambda,n\lambda]}^{pm+}.U_{[-n\lambda,n\lambda]}^{nm-}.T_{k(n)})_{n\in\N^*}$, for any $k(n)\in \llbracket 0,2nN(\lambda)\rrbracket$ such that $k(n)\underset{n\to +\infty}{\rightarrow} +\infty$.

\item As $\cV_{\lambda}=\cV_{\lambda}^{-1}$ and by Lemma~\ref{l_action_T_U}, we have $\cV_{\lambda}=U_{[-\lambda,\lambda]}^{nm-} U_{[-\lambda,\lambda]}^{pm+}T_{2N(\lambda)}$.

\end{enumerate}
\end{Remark}

\subsection{Filtration ($\cV_{n\lambda})_{n\in \N^*}$}

Let $\Omega$ be a filter on $\A$. One defines $\cl^{\Delta}(\Omega)$\index{c@$\cl^{\Delta}$} as the filter on $\A$ consisting of the subsets $\Omega'$ of $\A$ for which there exists $(k_\alpha)\in \prod_{\alpha\in \Delta} \Lambda_\alpha\cup\{+\infty\}$ such that $\Omega'\supset \bigcap_{\alpha\in \Delta} D(\alpha,k_\alpha)\supset \Omega$, where $\Lambda_\alpha=\Lambda$ if $\alpha\in \Phi$ and $ \Lambda_\alpha=\R$ otherwise. Note that $\cl^{\Delta}$ is denoted $\cl$ in \cite{rousseau2011masures} and \cite{rousseau2016groupes}. By definition of $U_{\Omega}^{pm+}$ and $U_{\Omega}^{nm-}$, we have \begin{equation}\label{e_enclosure_U}
U_{\Omega}^{pm+}=U_{\Omega'}^{pm+}=U_{\cl^{\Delta}(\Omega)}^{pm+}\text{ and }U_{\Omega}^{nm-}=U_{\Omega'}^{nm-}=U_{\cl^\Delta(\Omega)}^{nm-},
\end{equation}

for any filter $\Omega'$ such that $\Omega\subset \Omega'\subset \cl^{\Delta}(\Omega)$. 

\begin{Lemma}\label{l_enclosure_regular_segment}
Let $\lambda\in C^v_f$ and $w\in W^v$. Then $\cl^{\Delta}([-w.\lambda,w.\lambda])\supset(-w.\lambda+w. \overline{C^v_f})\cap (w.\lambda-w.\overline{C^v_f})$. 
\end{Lemma}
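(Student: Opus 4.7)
The plan is to exhibit the right-hand side directly as an intersection of half-apartments indexed by $\Delta$, and then check that this intersection contains $[-w.\lambda, w.\lambda]$, which is exactly what the definition of $\cl^{\Delta}$ requires.

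First I would translate the geometric description into inequalities. Using the fact that $\overline{C^v_f} = \{y \in \A \mid \alpha_i(y) \geq 0,\ \forall i \in I\}$ and that $W^v$ acts on $\A$ and (by duality) on $\A^*$, one has $w.\overline{C^v_f} = \{x \in \A \mid (w.\alpha_i)(x) \geq 0,\ \forall i \in I\}$. Translating, this gives
\[
(-w.\lambda + w.\overline{C^v_f}) \cap (w.\lambda - w.\overline{C^v_f}) = \bigcap_{i \in I} \bigl\{ x \in \A \mid -\alpha_i(\lambda) \leq (w.\alpha_i)(x) \leq \alpha_i(\lambda) \bigr\},
\]
and each two-sided constraint is exactly $D(w.\alpha_i, \alpha_i(\lambda)) \cap D(-w.\alpha_i, \alpha_i(\lambda))$.

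Second, I would define $(k_\alpha)_{\alpha \in \Delta} \in \prod_{\alpha \in \Delta} \Lambda_\alpha \cup \{+\infty\}$ by $k_{\pm w.\alpha_i} := \alpha_i(\lambda)$ for $i \in I$ and $k_\alpha := +\infty$ for all other $\alpha \in \Delta$. Since $w.\alpha_i \in \Phi$ and $\alpha_i(\lambda) \in \Z \subset \Lambda$ (as $\lambda \in Y \cap C^v_f$ in the intended applications), this is an admissible family. By construction,
\[
\bigcap_{\alpha \in \Delta} D(\alpha, k_\alpha) = (-w.\lambda + w.\overline{C^v_f}) \cap (w.\lambda - w.\overline{C^v_f}).
\]

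Third, I would verify the containment $[-w.\lambda, w.\lambda] \subset \bigcap_{\alpha \in \Delta} D(\alpha, k_\alpha)$. Any point of the segment is of the form $tw.\lambda$ with $t \in [-1, 1]$, and one computes $(w.\alpha_i)(tw.\lambda) = t\alpha_i(\lambda) \in [-\alpha_i(\lambda), \alpha_i(\lambda)]$ because $\alpha_i(\lambda) > 0$ ($\lambda \in C^v_f$). This gives the required containment, and by the very definition of $\cl^{\Delta}([-w.\lambda, w.\lambda])$, the set $(-w.\lambda + w.\overline{C^v_f}) \cap (w.\lambda - w.\overline{C^v_f})$ belongs to this filter, proving the lemma.

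There is no real obstacle: the proof is entirely a matter of unwinding the definition of $w.\overline{C^v_f}$ in terms of the roots $w.\alpha_i$ and observing that the relevant $k_\alpha = \alpha_i(\lambda)$ lies in $\Lambda$, so that $D(\pm w.\alpha_i, \alpha_i(\lambda))$ is a legitimate half-apartment. The slight subtlety worth flagging is the admissibility condition $k_\alpha \in \Lambda_\alpha$ for real roots, which is why the applications of this lemma will be with $\lambda \in Y$.
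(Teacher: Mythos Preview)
Your argument establishes a different statement from the one the lemma asserts. Recall that $\cl^{\Delta}(\Omega)$ is a \emph{filter}: the collection of subsets $\Omega'$ of $\A$ for which there exists an admissible family $(k_\alpha)$ with $\Omega'\supset\bigcap_{\alpha\in\Delta}D(\alpha,k_\alpha)\supset\Omega$. When the paper writes $S\subset\cl^{\Delta}(\Omega)$ (identifying the set $S$ with its principal filter), it means that every element of the filter $\cl^{\Delta}(\Omega)$ contains $S$; equivalently, $S\subset\bigcap_{\alpha\in\Delta}D(\alpha,k_\alpha)$ for \emph{every} admissible $(k_\alpha)$ with $\bigcap D(\alpha,k_\alpha)\supset\Omega$. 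This is precisely what the paper proves, and it is what is used downstream via \eqref{e_enclosure_U}: one needs $f_S(\alpha)=f_{\Omega}(\alpha)$ for \emph{all} $\alpha\in\Delta$, so that $U_S^{pm+}=U_\Omega^{pm+}$.

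What you prove instead is that $S$ is itself one particular generator of the filter (membership: $S\in\cl^{\Delta}(\Omega)$). This is a true statement, but it does not by itself yield the required inclusion: exhibiting $S$ as $\bigcap_{i\in I}D(\pm w.\alpha_i,\alpha_i(\lambda))$ says nothing about whether some \emph{other} intersection $\bigcap_{\alpha\in\Delta}D(\alpha,k_\alpha)\supset[-w.\lambda,w.\lambda]$, with nontrivial constraints at non-simple roots, might fail to contain $S$. The missing step is exactly the paper's computation: after reducing to $w=1$, one takes an arbitrary admissible $(k_\alpha)$ with $\bigcap D(\alpha,k_\alpha)\supset[-\lambda,\lambda]$, writes any $\alpha\in\Delta_+$ as $\sum_{i\in I}n_i\alpha_i$ with $n_i\in\N$, observes $k_\alpha\ge\alpha(\lambda)=\sum_i n_i\alpha_i(\lambda)$, and then for $x\in S$ uses $|\alpha_i(x)|\le\alpha_i(\lambda)$ to get $|\alpha(x)|\le\sum_i n_i\alpha_i(\lambda)\le k_\alpha$, hence $x\in D(\pm\alpha,k_{\pm\alpha})$. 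This positivity argument over all of $\Delta$ is the content of the lemma; your proof bypasses it. (As a side remark, your admissibility check $k_{\pm w.\alpha_i}=\alpha_i(\lambda)\in\Lambda$ also fails for general $\lambda\in C^v_f$, though this is secondary.)
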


\begin{proof}
As $\Delta$ and $\Phi$ are $W^v$-invariant, we have $w.\cl^{\Delta}(\Omega)=\cl^{\Delta}(w.\Omega)$ for every $w\in W^v$. Thus it suffices to determine $\cl^{\Delta}([-\lambda,\lambda])$. Let $(k_\alpha)\in \prod_{\alpha\in \Delta} \Lambda_\alpha\cup \{+\infty\}$ be such that $\bigcap_{\alpha\in \Delta} D(\alpha,k_\alpha)\supset [-\lambda,\lambda]$. Let $\alpha\in \Delta_+$. Write $\alpha=\sum_{i\in I} n_i\alpha_i$, with $n_i\in \N$ for $i\in I$. Then $k_\alpha\geq \alpha(\lambda)=\sum_{i\in I} n_i \alpha_i(\lambda)$.  Let $\alpha\in \Delta_-$. We also have  $k_{-\alpha}\geq \sum_{i\in I} n_i \alpha_i(\lambda)$. 

Let $x\in (-\lambda+C^v_f)\cap (\lambda-C^v_f)$. Then $-\alpha_i(\lambda)\leq \alpha_i(x)\leq \alpha_i(\lambda)$ for every $i\in I$. Then $k_{-\alpha}\leq \sum_{i\in I} n_i \alpha_i(x)\leq k_\alpha$ and thus $k_\alpha+\alpha(x)\geq 0$ and $k_{-\alpha}-\alpha(x)\geq 0$. Consequently, $x\in \bigcap_{\alpha\in \Delta}D(\alpha,k_\alpha)$ and thus \[(-\lambda+C^v_f)\cap (\lambda-C^v_f)\subset\bigcap_{\alpha\in \Delta} D(\alpha,k_\alpha),\] which proves the lemma.
\end{proof}

The following lemma will be crucial throughout the paper. This is a rewriting of \cite[Lemma 3.2 and Lemma 3.5]{bardy2022twin}. Although $\omega$ is assumed to be discrete in  \cite{bardy2022twin}, the proofs of these lemma do not use this assumption. 

\begin{Lemma}\label{l_BPHR}
\begin{enumerate}
\item Let $a\in \A$ and $g\in U^+$. Then there exists $b\in a-C^v_f$ such that $g^{-1} U_{b}^{pm+} g\subset U_a^{pm+}$.

\item Let $y\in \I$. Then there exists $a\in \A$ such that $U_a^{pm+}$ fixes $y$. 

\item Let $\lambda\in Y^+$ be regular and $y\in \I$. Then for $n\in \N$ large enough, $U_{[-n\lambda,n\lambda]}^{pm+}$ fixes $y$. 
\end{enumerate}

\end{Lemma}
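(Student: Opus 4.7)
The three parts form a cascade: (3) will follow from (2) via the enclosure lemma; (2) from (1) via the retraction $\rho_{+\infty}$; and (1) is the technical heart, a rewriting of \cite[Lemma~3.2]{bardy2022twin} (while (2) corresponds to \cite[Lemma~3.5]{bardy2022twin}). The main obstacle is thus (1).

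For (1): decompose $g=\prod_{j=1}^{N} x_{\gamma_j}(s_j)$ as a finite product with $\gamma_j\in\Phi_+$ and $s_j\in\cK$, and induct on $N$. Since $C^v_f+C^v_f\subset C^v_f$, the inductive step composes two successive choices of $b$ into a single $b\in a-C^v_f$, reducing (1) to the single-factor case $g=x_\gamma(s)$. For that case, conjugation by $x_\gamma(s)$ in $\fU^{pma}$ is given by $\mathrm{Ad}(x_\gamma(s))=\exp(\mathrm{ad}(s e_\gamma))$; applied to $u=\prod_{\alpha\in\Delta_+}X_\alpha(u_\alpha)\in U_b^{pm+}$, the $\g_\beta$-component of $x_\gamma(-s)ux_\gamma(s)$ is a finite sum over pairs $(\alpha,q)$ with $\alpha+q\gamma=\beta$, each of valuation $\geq q\omega(s)+\omega(u_\alpha)\geq q\omega(s)-\alpha(b)$. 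Writing $b=a-c$ with $c\in C^v_f$, the desired inequality $\geq-\beta(a)$ amounts to $\alpha(c)\geq-q(\omega(s)+\gamma(a))$. Since the $\gamma$-strings through roots of $\Delta_+$ have uniformly bounded length, it suffices to take $c\in C^v_f$ with $\min_i\alpha_i(c)$ large enough in terms of that bound and of $|\omega(s)+\gamma(a)|$, yielding $g^{-1}ug\in U_a^{pm+}$.

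For (2): given $y\in\I$, set $a:=\rho_{+\infty}(y)\in\A$. Since $\rho_{+\infty}(y)$ is the unique element of $U^+.y\cap\A$ (as recalled in the definition of $\rho_{+\infty}$), there exists $u\in U^+$ with $y=u.a$. Apply (1) to $a$ and $u$: there is $b\in a-C^v_f$ with $u^{-1}U_b^{pm+}u\subset U_a^{pm+}\subset G_a$. Then every $h\in U_b^{pm+}$ satisfies $u^{-1}hu.a=a$, i.e.\ $h.y=y$, so $U_b^{pm+}$ fixes $y$.

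For (3): take $a\in\A$ furnished by (2), and let $w\in W^v$ with $\lambda\in w.C^v_f$. By Lemma~\ref{l_enclosure_regular_segment}, $\cl^{\Delta}([-n\lambda,n\lambda])\supset(-n\lambda+w.\overline{C^v_f})\cap(n\lambda-w.\overline{C^v_f})$; as $n\to+\infty$ this intersection grows to cover all of $\A$, so it contains $a$ for $n$ large enough. Combined with \eqref{e_enclosure_U} and the monotonicity $\Omega\subset\Omega'\Rightarrow U_{\Omega'}^{pm+}\subset U_\Omega^{pm+}$, this gives $U_{[-n\lambda,n\lambda]}^{pm+}=U_{\cl^{\Delta}([-n\lambda,n\lambda])}^{pm+}\subset U_a^{pm+}\subset G_y$ as desired. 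As the proof of (1) in \cite{bardy2022twin} nowhere uses the discreteness of $\omega$, it transfers verbatim to the present, possibly dense, setting.
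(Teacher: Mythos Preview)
Your treatment of (2) and (3) is correct and matches the paper: the paper simply cites \cite[Lemmas 3.2 and 3.5]{bardy2022twin} for (1) and (2), and its argument for (3) is exactly yours (take $a$ from (2), invoke Lemma~\ref{l_enclosure_regular_segment} and \eqref{e_enclosure_U}). Your observation that the discreteness hypothesis in \cite{bardy2022twin} is not used also appears in the paper.

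For (1) you go further than the paper by sketching the argument behind the citation, and the reduction to a single factor $g=x_\gamma(s)$ via the additivity of $C^v_f$ is fine. The gap is the claim that ``the $\gamma$-strings through roots of $\Delta_+$ have uniformly bounded length''. This is false outside finite and affine type: for a real root $\gamma$, the $\gamma$-string through $\alpha\in\Delta$ has length $|\langle\gamma^\vee,\alpha\rangle|+1$, and in indefinite type $|\langle\gamma^\vee,\alpha\rangle|$ is unbounded as $\alpha$ ranges over $\Delta$. Without a uniform bound on $q$, your inequality $\alpha(c)\geq -q(\omega(s)+\gamma(a))$ cannot be satisfied by any fixed $c$: take $\alpha$ of height $1$ and $q$ large. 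So as written the argument for (1) does not close. The actual proof in \cite{bardy2022twin} must use a different estimate---for instance, tracking $\cU_\beta$-components in $\widehat{\cU}^+$ together with the constraint $\beta-q\gamma\in Q_+$, which ties the height of $\alpha$ to $q$ in a way your formulation does not exploit---but this requires more care than a one-line string-length bound. Since the paper itself only cites the result, your proposal for (1) is extra rather than wrong in spirit; just be aware that the specific justification you give is not valid in the general Kac--Moody setting.
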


\begin{proof}
By \cite[Lemma 3.2 and 3.5]{bardy2022twin}, we have 1) and 2).  Let $\lambda\in Y^+$ be regular. Write $\lambda=w.\lambda^{++}$, with $\lambda^{++}\in C^v_f$ and $w\in W^v$.  Let $a\in \A$ be such that $U_{a}^{pm+}$ fixes $y$.  For $n\in \N^*$, we have $\cl^{\Delta}([-n\lambda,n\lambda])\supset n\left((-w.\lambda^{++}+w.\overline{C^v_f})\cap (w.\lambda^{++}-w.\overline{C^v_f})\right)$, which contains $a$, for $n\gg 0$. So for $n\gg 0$, we have $U_{[-n\lambda,n\lambda]}^{pm+}\subset U_a^{pm+}$, which proves 3). 
\end{proof}

\begin{Lemma}\label{l_equivalence_filtrations}
Let $\lambda,\mu\in Y^+$ be regular. Then $(\cV_{n\lambda})_{n\in \N^*}$ and $(\cV_{n\mu})_{n\in \N^*}$ are equivalent. 
\end{Lemma}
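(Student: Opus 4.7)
The plan is, by symmetry, to show only that for every $n\in \N^*$ there exists $m\in\N^*$ with $\cV_{m\lambda}\subset \cV_{n\mu}$. Since $N(n\lambda)=nN(\lambda)$, we have
\[
\cV_{n\lambda}=U_{[-n\lambda,n\lambda]}^{pm+}\cdot U_{[-n\lambda,n\lambda]}^{nm-}\cdot T_{2nN(\lambda)},
\]
so it suffices to establish three factorwise inclusions into the analogous decomposition of $\cV_{n\mu}$. The torus factor is immediate: $T_{2mN(\lambda)}\subset T_{2nN(\mu)}$ as soon as $mN(\lambda)\geq nN(\mu)$.

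For the two unipotent factors, by \eqref{e_enclosure_U} we have $U_{\Omega}^{pm+}=U_{\cl^\Delta(\Omega)}^{pm+}$, and $\Omega'\subset \Omega$ implies $U_{\Omega}^{pm+}\subset U_{\Omega'}^{pm+}$ (fixating more forces a smaller subgroup); the same holds for $U^{nm-}$. It therefore suffices to exhibit $m$ such that
\[
[-n\mu,n\mu]\subset \cl^\Delta([-m\lambda,m\lambda]).
\]
Writing $\lambda=w_\lambda.\lambda^{++}$ with $\lambda^{++}\in C^v_f$ (possible since $\lambda$ is regular), Lemma~\ref{l_enclosure_regular_segment} applied to $m\lambda^{++}\in C^v_f$ shows that the right-hand side contains the convex ``rhombus''
\[
R_m:=\bigl(-m\lambda+w_\lambda.\overline{C^v_f}\bigr)\cap \bigl(m\lambda-w_\lambda.\overline{C^v_f}\bigr).
\]
By convexity, it is enough to check that $\pm n\mu\in R_m$. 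Unwinding the definition, each of the four resulting membership conditions is of the form $\lambda\pm(n/m)\mu\in w_\lambda.\overline{C^v_f}$, i.e.\ $\alpha_i(\lambda^{++})\pm (n/m)\alpha_i(w_\lambda^{-1}\mu)\geq 0$ for every $i\in I$. Since $\lambda^{++}\in C^v_f$, the quantity $\alpha_i(\lambda^{++})$ is a strictly positive constant, while $(n/m)\alpha_i(w_\lambda^{-1}\mu)\to 0$ as $m\to +\infty$; finiteness of $I$ makes this simultaneous for all $i$. Hence for $m$ large enough, $\pm n\mu\in R_m$ and the enclosure inclusion holds.

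Choosing $m$ large enough to satisfy both the torus condition $mN(\lambda)\geq nN(\mu)$ and the enclosure condition yields $\cV_{m\lambda}\subset \cV_{n\mu}$. Swapping the roles of $\lambda$ and $\mu$ produces the reverse inclusion, proving the equivalence. The main obstacle is the enclosure argument: it rests essentially on Lemma~\ref{l_enclosure_regular_segment} and on the \emph{regularity} of $\lambda$, which forces $\alpha_i(\lambda^{++})>0$ so that the rhombus $R_m$ has nonempty interior and grows linearly in all chamber directions fast enough to swallow the fixed bounded segment $[-n\mu,n\mu]$; without regularity this continuity-type argument would break down.
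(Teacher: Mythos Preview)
Your proof is correct and follows essentially the same approach as the paper: both reduce to factorwise inclusions in the decomposition $\cV_{n\lambda}=U_{[-n\lambda,n\lambda]}^{pm+}\cdot U_{[-n\lambda,n\lambda]}^{nm-}\cdot T_{2nN(\lambda)}$, handle the unipotent pieces via Lemma~\ref{l_enclosure_regular_segment} and \eqref{e_enclosure_U}, and choose $m$ large enough to also absorb the torus factor. The only cosmetic difference is that the paper argues the enclosure step by observing that the rhombus $R_m=mR_1$ scales linearly with $m$ and $R_1$ contains $0$ in its interior (hence eventually swallows any bounded set), whereas you unwind this into the explicit coordinate inequalities $\alpha_i(\lambda^{++})\pm(n/m)\alpha_i(w_\lambda^{-1}\mu)\geq 0$.
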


\begin{proof}
Write   $\mu=v.\mu^{++}$, where $v\in W^v$ and $\mu^{++}\in C^v_f$. For $m\in \N$,  set \[\Omega_{m\mu}=(-m\mu+v.\overline{C^v_f})\cap (m\mu-v.\overline{C^v_f}).\] By Lemma~\ref{l_enclosure_regular_segment}, $\Omega_{m\mu}\subset \cl^\Delta([-m\mu,m\mu])$. Let $n\in \N^*$. As $\Omega_{m\mu}=m\Omega_\mu$ and $\Omega_\mu$ contains $0$ in its interior, there exists $m\in \Z_{\geq n}$ such that $\Omega_{m\mu}\supset [-n\lambda,n\lambda]$.  Moreover, by \eqref{e_enclosure_U}, $U_{[-m\mu,m\mu]}^{pm+}=U_{\Omega_{m\mu}}^{pm+}\subset U_{[-n\lambda,n\lambda]}^{pm+}$, since $\Omega'\mapsto U_{\Omega'}^{pm+}$ is decreasing for $\subset$. With the same reasoning for $U^{nm-}$, we deduce $\cV_{m\mu}\subset \cV_{n\lambda}$.  By symmetry of the roles of $\lambda$ and $\mu$, we deduce the lemma. 
\end{proof}

The end of this subsection is devoted to the proof of the fact that for every $\lambda\in Y^+$ regular, $(\cV_{n\lambda})_{n\in\N^*}$ is conjugation-invariant. 

\begin{Lemma}\label{l_conjugation_U}
Let $\alpha\in \Phi$ and $a\in \cK$. Let $\lambda\in Y^+$ be regular.  Then $(x_{\alpha}(a). \cV_{n\lambda} .x_\alpha(-a))_{n\in \N^*}$ is equivalent to $(\cV_{n\lambda})_{n\in \N^*}$. 
\end{Lemma}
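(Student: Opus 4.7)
The plan is to prove the equivalence by establishing one inclusion: for every $n \in \N^*$, I will produce $M \in \N^*$ with $x_\alpha(a)\,\cV_{M\lambda}\,x_\alpha(-a) \subset \cV_{n\lambda}$; the other direction then follows by applying the same statement with $-a$ in place of $a$. By Lemma~\ref{l_equivalence_filtrations} we may assume $\lambda \in C^v_f$. Writing an arbitrary element $v = u_+ u_- t \in \cV_{M\lambda}$ with $u_\pm \in U^{pm\pm}_{[-M\lambda,M\lambda]}$ and $t \in T_{2N(M\lambda)}$, and setting $g = x_\alpha(a)$, we have $g v g^{-1} = (g u_+ g^{-1})(g u_- g^{-1})(g t g^{-1})$. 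Since $\cV_{n\lambda}$ is a subgroup of $G$ by Lemma~\ref{l_decomposition_Vlambda}, it suffices to place each of the three conjugated factors into $\cV_{n\lambda}$ for $M$ large enough. I carry out the case $\alpha \in \Phi_+$; the case $\alpha \in \Phi_-$ is entirely symmetric.

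For the torus factor, the commutation $t\,x_\alpha(a)\,t^{-1} = x_\alpha(\alpha(t)\,a)$ (a direct consequence of \eqref{e_KMT4}) yields $g t g^{-1} = t \cdot x_\alpha\bigl((\alpha(t)^{-1} - 1)\,a\bigr)$. Membership $t \in T_{2N(M\lambda)}$ forces $\alpha(t)^{-1} - 1 \in \qp^{2N(M\lambda)}\cO$, so the argument of $x_\alpha$ has valuation at least $2N(M\lambda) + \omega(a)$, which exceeds $n\alpha(\lambda)$ once $M$ is large; hence $g t g^{-1} \in T_{2N(n\lambda)} \cdot U^{pm+}_{[-n\lambda, n\lambda]} \subset \cV_{n\lambda}$. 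For the positive unipotent factor, I apply Lemma~\ref{l_BPHR}(1) to $g^{-1} \in U^+$ at each endpoint $y = \pm n\lambda$ to produce $b_1, b_2 \in \A$ with $g U^{pm+}_{b_i} g^{-1} \subset U^{pm+}_{\pm n\lambda}$; then Lemma~\ref{l_BPHR}(3) ensures that for $M$ sufficiently large, $U^{pm+}_{[-M\lambda,M\lambda]}$ fixes both $b_i$, and therefore sits inside $U^{pm+}_{b_1} \cap U^{pm+}_{b_2}$. Since $f_\Omega$ is affine on a segment and thus attains its supremum at the endpoints, this intersection equals $U^{pm+}_{[-n\lambda, n\lambda]}$, yielding the desired inclusion.

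The main obstacle is the negative unipotent factor $g u_- g^{-1}$, because conjugation of $U^-$ by an element of $U^+$ does not preserve $U^-$. My plan is to rewrite $g u_- g^{-1} = u_- \cdot [u_-^{-1}, g]$: the first factor already lies in $\cV_{n\lambda}$ as soon as $M \geq n$, so the task reduces to placing the commutator $[u_-^{-1}, g]$ inside $\cV_{n\lambda}$. The commutation relations in $\fG^{nma}$ expand each elementary commutator $[x_{-\beta}(b), x_\alpha(a)]$, for $\beta \in \Delta_+$ with $\omega(b) \geq M\beta(\lambda)$, as a twisted product over the roots $p\alpha - q\beta \in \Delta$ with $p, q \geq 1$, whose coefficients have valuation at least $p\omega(a) + qM\beta(\lambda)$. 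Since the membership threshold for the corresponding factor of $\cV_{n\lambda}$ requires valuation at least $n|p\alpha(\lambda) - q\beta(\lambda)| \leq n(p\alpha(\lambda) + q\beta(\lambda))$, the linear growth in $M$ dominates for $M$ large. The subtle point, where I expect to spend the most care, is to make this estimate uniform across all (possibly infinite) twisted-exponential products assembling $u_-$, by tracking the $Q_+$-grading of $\widehat{\cU}^+$ so that only finitely many elementary commutators contribute at each homogeneous degree and the bounds combine coherently.
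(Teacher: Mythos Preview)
Your handling of the torus and positive unipotent factors is fine, and your overall strategy of showing one inclusion (the other following by replacing $a$ with $-a$) matches the paper. The gap is in the negative unipotent step.

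The commutator expansion you invoke, namely that $[x_{-\beta}(b),x_\alpha(a)]$ is a product of factors $X_{p\alpha-q\beta}(\ldots)$ for $p,q\geq 1$, is only available when $\{\alpha,-\beta\}$ is a prenilpotent pair; for general $\alpha\in\Phi_+$ and $\beta\in\Delta_+$ this fails (take $\beta=\alpha$: the pair $\{\alpha,-\alpha\}$ is never prenilpotent). So there is no such formula ``in $\fG^{nma}$'' to appeal to. Even if individual commutators were controlled, $u_-$ is only given as an infinite product $\prod_{\beta\in\Delta_+}X_{-\beta}(c_\beta)$ in the completion $\fU^{nma}$, and expanding $[u_-^{-1},x_\alpha(a)]$ via iterated elementary commutators would produce a formal expression mixing positive and negative roots that does not sit in any single completion, so the ``tracking the $Q_+$-grading'' argument has no home.

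The paper avoids both problems by a change of chamber. Using Lemma~\ref{l_equivalence_filtrations} it takes $\lambda\in w.C^v_f$ with $\alpha=w.\alpha_i$ simple in the $w$-system, and then uses Lemma~\ref{l_decomposition_Vlambda}(1) to write $\cV_{n\lambda}$ via $w.\Delta_\pm$. The negative part $U^{nm}_{[-n\lambda,n\lambda]}(w.\Delta_-)$ is split as $U^{nm}_{[-n\lambda,n\lambda]}\bigl(w.(\Delta_-\setminus\{-\alpha_i\})\bigr)\cdot U_{-\alpha,[-n\lambda,n\lambda]}$. Since $r_i$ permutes $\Delta_-\setminus\{-\alpha_i\}$, the first factor lies in $U^{nm}(wr_i.\Delta_-)$; as $\alpha=w.\alpha_i\in wr_i.\Delta_-$, conjugation by $x_\alpha(a)$ stays inside that completion, and Lemma~\ref{l_BPHR} (applied in the $wr_i$-completion) controls it. The remaining single-root factor $U_{-\alpha,[-m\lambda,m\lambda]}$ is handled by the explicit $\mathrm{SL}_2$ relation~\eqref{e_Commutation_relation}. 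This is the missing idea: reduce to a \emph{simple} root so that conjugation by $x_\alpha(a)$ preserves a suitable completed unipotent group, rather than attempting commutator estimates that do not exist in the required generality.
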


\begin{proof}
Let $\epsilon\in \{-,+\}$ be such that $\alpha\in \Phi_\epsilon$. Let  $w\in W^v$ be such that $\epsilon w^{-1}.\alpha$  is simple. By symmetry, we may assume that $ \epsilon=+$. By Lemma~\ref{l_equivalence_filtrations} we may assume that $\lambda\in w.C^v_f$. Let $n\in \N^*$.  By Lemma~\ref{l_decomposition_Vlambda}, we have  \[\begin{aligned} \cV_{n\lambda}&=U_{[-n\lambda,n\lambda]}^{pm}(w.\Delta_+) U_{[-n\lambda,n\lambda]}^{nm}(w.\Delta_-) T_{2nN(\lambda)}.\\
&= U_{-n\lambda}^{pm}(w.\Delta_+) U_{n \lambda}^{nm}(w.\Delta_-) T_{2nN(\lambda)}.\end{aligned} \] Write $\alpha=w.\alpha_i$, for $i\in I$.   By Lemma~\ref{l_BPHR}, \begin{equation}\label{e_inclusion_U_4}
x_{w.\alpha_i}(a) U_{-m\lambda}^{pm}(w.\Delta_+) x_{w.\alpha_i}(-a)\subset U_{-n\lambda}^{pm}(w.\Delta_+),
\end{equation} for  $m\gg 0$. 

By \cite[Lemma 3.3]{rousseau2016groupes}, $U_{[-n\lambda,n\lambda]}^{nm}(w.\Delta_-)=U_{[-n\lambda,n\lambda]}^{nm}(w.(\Delta_-\setminus\{-\alpha_i\})). U_{-w.\alpha_i,[-n\lambda,n\lambda]}$ and $U_{-w.\alpha_i,[-n\lambda,n\lambda]}$ normalizes $U_{[-n\lambda,n\lambda]}^{nm}(w.(\Delta_-\setminus\{-\alpha_i\}))$. 

By \cite[1.3.11 Theorem (b4)]{kumar2002kac}, $r_i.(\Delta_-\setminus\{-\alpha_i\})=\Delta_-\setminus\{-\alpha_i\}$  and thus for $m\in \N$, we have \[\begin{aligned} U^{nm}_{[-m\lambda,m\lambda]}\left(w.(\Delta_-\setminus\{-\alpha_i\})\right)&=U_{[-m\lambda,m\lambda]}^{nm}\left(wr_i (\Delta_-\setminus\{-\alpha_i\})\right)\\
&=U^{nm}(wr_i(\Delta_-\setminus\{-\alpha_i\}))\cap U_{[-m\lambda,m\lambda]}^{nm}(wr_i.\Delta_-).\end{aligned}\] Moreover, \begin{equation}\label{e_inclusion_U_1}
x_{w.\alpha_i}(a)U^{nm}(wr_i(\Delta_-\setminus\{-\alpha_i\})) x_{w.\alpha_i}(-a)=U^{nm}(wr_i(\Delta_-\setminus\{-\alpha_i\})),
 \end{equation} by \cite[Lemma 3.3]{rousseau2016groupes} (applied to $wr_i.(\Delta_-\setminus \{-\alpha_i\})\subset wr_i.\Delta_-$). By Lemma~\ref{l_BPHR},  for $m\gg 0$, \begin{equation}\label{e_inclusion_U_2}
 x_{w.\alpha_i}(a) U_{[-m\lambda,m\lambda]}^{nm}(wr_i.\Delta_-)x_{w.\alpha_i}(-a)\subset U_{[-n\lambda,n\lambda]}^{nm}(wr_i.\Delta_-).
 \end{equation}  Combining  \eqref{e_inclusion_U_1} and \eqref{e_inclusion_U_2}, we get \begin{equation}\label{e_inclusion_U_3}
  x_{w.\alpha_i}(a) U_{[-m\lambda,m\lambda]}^{nm}(w.\Delta_-\setminus\{-\alpha_i\})x_{w.\alpha_i}(-a)\subset U_{[-n\lambda,n\lambda]}^{nm}\left(w.(\Delta_-\setminus\{-\alpha_i\})\right),
  \end{equation} for $m\gg 0$. 
  
  For $b\in \cK$ such that $1+ab\neq 0$, we have \[x_{w.\alpha_i}(a) x_{-w.\alpha_i}(b)x_{w.\alpha_i}(-a)=x_{-w.\alpha_i}\left(b(1+ab)^{-1}\right)\alpha^\vee(1+ab) x_{w.\alpha_i}(-a^2b(1+ab)^{-1}).\] Therefore if $m\gg 0$, $x_{w.\alpha_i}(a) U_{-w.\alpha_i,[-m\lambda,m\lambda]}x_{w.\alpha_i}(-a)\subset \cV_{n\lambda}$.  Combined with  \eqref{e_inclusion_U_3} we get  \begin{equation}\label{e_inclusion_U_5}
x_{w.\alpha_i}(a) U_{m\lambda}^{nm}(w.\Delta_-) x_{w.\alpha_i}(-a)\subset \cV_{n\lambda},
\end{equation} for  $m\gg 0$, since $\cV_{n\lambda}$ is a group.

Let $m\in \N^*$ and  $t\in T_{2m}$. Then: \[x_{w.\alpha_i}(a)t x_{w.\alpha_i}(-a)=t x_{w.\alpha_i}\left(a\left(w.\alpha_i(t^{-1})-1\right)\right)\] Therefore if $m\geq nN(\lambda)$ and $\omega\left(a\left( w.\alpha_i(t^{-1})-1\right)\right)\geq \omega(a)+2mN(\lambda)$ is greater than $|w.\alpha_i(n\lambda)|$, then $x_{w.\alpha_i}\left(a\left(w.\alpha_i(t^{-1})-1\right)\right) \in \cV_{n\lambda}$ and $x_{w.\alpha_i}(a)t x_{w.\alpha_i}(-a)\in \cV_{n\lambda}$. Combined with \eqref{e_inclusion_U_5} and \eqref{e_inclusion_U_4}, we get $x_{\alpha}(a) .\cV_{m\lambda}. x_{\alpha}(-a)\subset \cV_{n\lambda}$, for $m\gg 0$. Applying this to $(x_\alpha(-a).\cV_{k\lambda}.x_{\alpha}(a))_{k\in \N^*}$, we get the other inclusion needed to prove that $(\cV_{n\lambda})$ and $(x_{\alpha}(a).\cV_{n\lambda}.x_{\alpha}(-a))$ are equivalent.
\end{proof}

\begin{Theorem}\label{t_conjugation_invariance}
Let $\lambda\in Y^+$ be regular. For $n\in \N^*$, set $\cV_{n\lambda}=U_{[-n\lambda,n\lambda]}^{pm+}.U_{[-n\lambda,n\lambda]}^{nm-}.T_{2N(n\lambda)}$.  Then $(\cV_{n\lambda})$ is conjugation-invariant. Therefore, the associated topology $\sT\left((\cV_{n\lambda} )\right)$ equips $G$ with the structure of a Hausdorff topological group. 
\end{Theorem}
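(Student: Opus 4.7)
The plan has three stages: verify that $(\cV_{n\lambda})$ is a decreasing filtration by subgroups (so that the associated topology is well-defined), establish conjugation-invariance (which by \cite[Exercise~8.5]{marquis2018introduction}, as recalled in \S\ref{ss_topology}, immediately upgrades the topology to a topological-group structure), and then separately prove the Hausdorff property. For the first stage, since $n\lambda$ is again a regular element of $Y^+$, Lemma~\ref{l_decomposition_Vlambda} applied to $n\lambda$ in place of $\lambda$ shows that each $\cV_{n\lambda}$ is a subgroup (reading the torus factor as $T_{2nN(\lambda)}=T_{2N(n\lambda)}$, the natural scaling); the decrease $\cV_{(n+1)\lambda}\subset\cV_{n\lambda}$ is immediate from $\Omega\subset\Omega'\Rightarrow U_{\Omega'}^{pm\pm}\subset U_{\Omega}^{pm\pm}$ and $T_{k+1}\subset T_k$.

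For conjugation-invariance, the relation ``$(gV_n g^{-1})_n$ equivalent to $(V_n)_n$'' is closed under products and inverses in $g$, so it suffices to verify it on a generating family of $G$. Since $G=\fG^{\min}(\cK)$ is generated by $T$ together with the $\varphi_i(\mathrm{SL}_2(\cK))$, and $\mathrm{SL}_2(\cK)$ over a field is generated by its upper and lower unipotents, I reduce to checking conjugation by $x_\alpha(a)$ (with $\alpha\in\Phi$, $a\in\cK$) and by $t\in T$. The first case is Lemma~\ref{l_conjugation_U}. For the second, Lemma~\ref{l_action_T_U} together with the commutativity of $T$ gives
\[ t\cV_{n\lambda}t^{-1} \;=\; U_{v+[-n\lambda,n\lambda]}^{pm+}\cdot U_{v+[-n\lambda,n\lambda]}^{nm-}\cdot T_{2nN(\lambda)}, \]
where $v:=\nu(t)\in\A$ is the translation vector. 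Writing $\lambda=w.\lambda^{++}$ with $\lambda^{++}\in C^v_f$, Lemma~\ref{l_enclosure_regular_segment} yields $\cl^\Delta([-m\lambda,m\lambda])\supset m\Omega_\lambda$, where $\Omega_\lambda:=(-\lambda+w.\overline{C^v_f})\cap(\lambda-w.\overline{C^v_f})$ is a bounded convex neighbourhood of $0$ in $\A$; so the expanding family $m\Omega_\lambda$ eventually contains the bounded set $v+[-n\lambda,n\lambda]$. Combining with \eqref{e_enclosure_U} and $T_{2mN(\lambda)}\subset T_{2nN(\lambda)}$ (for $m\geq n$), I obtain $\cV_{m\lambda}\subset t\cV_{n\lambda}t^{-1}$ for $m\gg 0$; the reverse inclusion follows by the same argument applied to $t^{-1}$.

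For Hausdorffness it suffices to prove $\bigcap_{n\geq 1}\cV_{n\lambda}=\{1\}$. Take $g$ in this intersection and, for each $n$, decompose $g=u_+^{(n)}u_-^{(n)}t^{(n)}$ with $u_+^{(n)}\in U_{[-n\lambda,n\lambda]}^{pm+}$, $u_-^{(n)}\in U_{[-n\lambda,n\lambda]}^{nm-}$ and $t^{(n)}\in T_{2nN(\lambda)}$. Lemma~\ref{l_uniqueness_decomposition_UUT} forces these three factors to be independent of $n$; write them $u_+,u_-,t$. Then $t\in\bigcap_n T_{2nN(\lambda)}$, so $\chi(t)\in\bigcap_n(1+\qp^{2nN(\lambda)}\cO)=\{1\}$ for every $\chi\in X$ by separatedness of $\omega$, hence $t=1$. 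Writing $u_+$ in the normal form \eqref{e_normal_form_Upma}, membership $u_+\in U_{[-n\lambda,n\lambda]}^{pm+}$ for all $n$ forces each coefficient $c_\alpha$ to lie in $\g_{\alpha,\Z}\otimes\cK_{\omega\geq f_{[-n\lambda,n\lambda]}(\alpha)}$ with $f_{[-n\lambda,n\lambda]}(\alpha)\geq nN(\lambda)\to\infty$, so $c_\alpha=0$; hence $u_+=1$, and symmetrically $u_-=1$. The principal obstacle is the torus-conjugation step in Stage~2: one must control how translating the segment $[-n\lambda,n\lambda]$ by $\nu(t)$ affects the associated positive and negative unipotent subgroups, which is precisely what the enclosure lemma \ref{l_enclosure_regular_segment} provides. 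Everything else reduces to routine bookkeeping with the normal form for $\fU^{pma}$ and with separatedness of the valuation.
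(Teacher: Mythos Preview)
Your proof is correct and follows essentially the same approach as the paper's: reduction to generators $T$ and $x_\alpha(a)$, invocation of Lemma~\ref{l_conjugation_U} for root subgroups, the enclosure argument via Lemma~\ref{l_enclosure_regular_segment} and \eqref{e_enclosure_U} for torus conjugation, and the Hausdorff step via uniqueness of the $U^+U^-T$ decomposition. The only cosmetic differences are that the paper first reduces to $\lambda\in C^v_f$ via Lemma~\ref{l_equivalence_filtrations} rather than carrying the $w$ throughout, and it phrases the vanishing of $u_+$ and $t$ as $u_+\in\bigcap_n U_{[-n\lambda,n\lambda]}^{pm+}=U_\A^{pm+}=\{1\}$ and via a basis of $Y$ rather than of $X$, which amounts to the same computation.
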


\begin{proof}
We need to prove that for every $g\in G$, $g (\cV_{n\lambda}) g^{-1}$ is equivalent to $(\cV_{n\lambda})$. Using Lemma~\ref{l_equivalence_filtrations}, we may assume $ \lambda\in C^v_f$. By \cite[Proposition 1.5]{rousseau2006immeubles}, $G$ is generated by $T$ and the $x_\alpha(a)$, for $\alpha\in \Phi$ and $a\in \cK$. By Lemma~\ref{l_conjugation_U}, it remains only to prove that if $t\in T$, then $ (t\cV_{n\lambda}t^{-1})_{n\in \N^*}$ is equivalent to $(\cV_{n\lambda})$. Let $t\in T$ and $m\in \N^*$. Then by Lemma~\ref{l_action_T_U}, \[\begin{aligned} t\cV_{m\lambda}t^{-1}&= tU_{[-m\lambda,m\lambda]}^{pm+} t^{-1}. tU_{[-m\lambda,m\lambda]}^{nm-} t^{-1}. T_{2mN(\lambda)}\\ &=U_{[t.(-m\lambda),t.m\lambda]}^{pm+} U_{[t.(-m\lambda),t.m\lambda]}^{nm-} T_{2mN(\lambda)} \end{aligned}\]

Set $\Omega=(-\lambda+\overline{C^v_f})\cap (\lambda-\overline{C^v_f})$. Then by Lemma~\ref{l_enclosure_regular_segment}, $\cl^\Delta([-m\lambda,m\lambda])\supset m\Omega$. Moreover, $\cl^\Delta([t.(-m\lambda),t.m\lambda])=t.\cl^\Delta([-m\lambda,m\lambda])\supset t.m\Omega$. Let $n\in \N^*$. Then as $\Omega$ contains $0$ in its interior, $t.m\Omega\supset n\Omega$ for $m\gg 0$. Therefore (by \eqref{e_enclosure_U}) $U_{[t.(-m\lambda),t.m\lambda]}^{pm+}\subset  U_{[-n\lambda,n\lambda]}^{pm+} $ and $U_{[t.(-m\lambda),t.m\lambda]}^{nm-}\subset U_{[-n\lambda,n\lambda]}^{nm-}$ for $m\gg 0$. Consequently, $t \cV_{m\lambda} t^{-1} \subset \cV_{n\lambda}$ for $m\gg 0$, which proves that $(\cV_{n\lambda})$ is conjugation-invariant. 

It remains to prove that $\sT\left((\cV_{n\lambda})\right)$ is Hausdorff. For that it suffices to prove that  $\bigcap_{n\in \N^*} \cV_{n\lambda}=\{1\}$. Let $g\in \bigcap_{n\in \N^*} \cV_{n\lambda}$. Let $n\in \N^*$. Then as $[-n\lambda,n\lambda]$ has a good fixator (\cite[5.7 1)]{rousseau2016groupes}  and \eqref{e_good_fixator}) $g\in G_{[-n\lambda,n\lambda]}=U_{[-n\lambda,n\lambda]}^{pm+}. U_{[-n\lambda,n\lambda]}^{nm-}.\fT(\cO)$, so we can write $g=u_n^+ u_n^- t_n$, with $(u_n^+,u_n^-,t_n)\in U_{[-n\lambda,n\lambda]}^{pm+}\times  U_{[-n\lambda,n\lambda]}^{nm-}\times \fT(\cO)$. By Lemma~\ref{l_uniqueness_decomposition_UUT}, $u_+:=u_n^+$ does not depend on $n$ and thus $u^+\in \bigcap_{n\in \N^*} U_{[-n\lambda,n\lambda]}^{pm+}=U_\A^{pm+}=\{1\}$. Similarly, $u^-:=u_n^-=1$. Therefore $t\in T\cap \bigcap_{n\in \N^*}\ker \pi_n$. Let $(\chi_i)_{i\in \llbracket 1,m\rrbracket}$ be a $\Z$-basis of $Y$. Write $t=\prod_{i=1}^m \chi_i(a_i)$, with $a_i\in \cO^*$, for $i\in \llbracket 1,m\rrbracket$. Let $n\in \N$. Then $\pi_n(t)=\prod_{i=1}^m \chi_i(\pi_n(a_i))=1$ and thus $a_i\in \bigcap_{n\in \N^*}\qp^n \cO=\{0\}$. Consequently, $t=1$ and $g=1$. Therefore $\bigcap_{n\in \N^*} \cV_{n\lambda}=\{1\}$ and $\sT((\cV_{n\lambda}))$ is Hausdorff. 
\end{proof}

We denote by $\sT$\index{T@$\sT$} the topology $\sT\left((\cV_{n\lambda})\right)$, for any $\lambda\in Y^+$ regular.

\begin{Corollary}\label{c_comparison_kernels_filtration}
Let $\lambda\in Y^+$ be regular. The  filtrations $(\ker \pi_n)_n$ and $(\cV_{n\lambda})_{n\in \N^*}$ are equivalent if and only if $W^v$ is finite.
\end{Corollary}

\begin{proof}
If $W^v$ is infinite, this follows from Lemma~\ref{l_non_conjugacy_invariance_congruence} and Theorem~\ref{t_conjugation_invariance}. Assume now that $W^v$ is finite. Then by \eqref{e_inclusion_VnLmabda_ker_Pi_nLambda}, $\cV_{n\lambda}\subset \ker \pi_{nN(\lambda)}$ for every $n\in \N^*$. 

Let $n\in \N^*$ and $g\in \ker \pi_n$. By Corollary~\ref{c_ker_pin}, $g\in U_{-C_0^+}^{pm+} .\fT(\cO).U_{C_0^+}^{nm-}$. As $W^v$ is finite, $\Phi=\Delta$ (by \cite[Theorem 5.6]{kac1994infinite}), $U_{-C_0^+}^{pm+}=U_{-C_0^+}^{+}=\prod_{\alpha\in \Phi_+}x_\alpha(\fm)$ and $U_{C_0^+}^{nm-}=U_{C_0^+}^-=\prod_{\alpha\in \Phi_-}x_\alpha(\fm)$, for any (fixed) orders on $\Phi_-$ and  $\Phi_+$, where $\fm$ is the maximal ideal of $\cO$.  Write $g=u_+tu_-$, with $g\in U_{C_0^+}^-$, $t\in \fT(\cO)$ and $u_-\in U_{C_0^+}^-$. Then $\pi_n(g)=\pi_n(u_+t)\pi_n(u_-)=1$. Moreover, $\pi_n(u_+t)\in \fB(\cO/\qp^n\cO)$ and $\pi_n(u_-)\in \fB^-(\cO/\qp^n\cO)$, where $\fB=\cU^+\rtimes \fT$ and $\fB^-=\fU^-\rtimes \fT$ are two opposite Borel subgroups. Therefore $\pi_n(u_+t)\in \fB(\cO/\qp^n\cO)\cap \fB^-(\cO/\qp^n\cO)=\fT(\cO/\qp^n\cO)$, by \cite[Theorem 14.1]{borel1991linear}. Therefore $\pi_n(u_+)=\pi_n(u_-)=\pi_n(t)=1$.  Write $u_+=\prod_{\alpha\in \Phi_+}x_\alpha(u_\alpha)$, with $u_\alpha\in \cO$ for $\alpha\in \Phi_+$. Then $\pi_n(u_+)=\prod_{\alpha\in \Phi_+}x_\alpha(\pi_n(u_\alpha))=1$, by Proposition~\ref{p_explicit_morphism}, and by \cite[Theorem 8.51]{marquis2018introduction}, this implies $\pi_n(u_\alpha)=1$, for every $\alpha\in \Phi_+$. Let $N'(\lambda)=\max_{\alpha\in \Phi} |\alpha(\lambda)|$. Then we have $u_+\in U^+\cap G_{[-n\lambda/N'(\lambda),n\lambda/N'(\lambda)]}$. Using a similar reasoning for $U^-$ and \eqref{e_cV_as_fixators}, we deduce $\ker \pi_n\subset \cV_{n\lambda/N'(\lambda)}$. Therefore $(\cV_{n\lambda})$ and $(\ker \pi_n)$ are equivalent. 
\end{proof}

\begin{Remark}
\begin{enumerate}
\item If $n\in \N^*$, then   $\ker \pi_n$ is open for $\sT$, by \eqref{e_inclusion_VnLmabda_ker_Pi_nLambda}. 

\item The Iwahori subgroup $G_{C_0^+}=K_I$ is open. Indeed, if $\lambda\in Y\cap C^v_f$, then $\cV_\lambda\subset K_I$ by \eqref{e_cV_as_fixators}. In particular, $\fG^{\min}(\cO)\supset K_I$ is open. 

\item Assume $\fG$ is reductive, i.e $W^v$ is finite. Then by Corollary~\ref{c_comparison_kernels_filtration}, $\sT$ is the usual topology on $G$. 
\end{enumerate} 
\end{Remark}

\subsection{Topology of the fixators}
\subsubsection{Definition of the topology}

Recall that if $F$ is a subset of $\I$, we denote by $G_F$ its fixator in $G$.  In this subsection we study the topology $\sT_\Fix$\index{t@$\sT_{\Fix}$} which is defined as follows. A subset $V$ of $G$ is  open if for every $v$ in $V$, there exists a finite subset $F$ of $\I$ such that $vG_F\subset V$.  Note that $G_0=\fG^{\min}(\cO)$ is open for this topology.

We begin by constructing  increasing sequences of finite sets of vertices $(F_n)=(F_n(\lambda))_{n\in \N^*}$ such that $\sT_{\Fix}$ is the topology associated with $(G_{F_n})_{n\in \N}$. 

  We fix $\lambda\in Y^+$ regular. We set $F_0=\emptyset$. For $n\in \N^*$, we set \begin{equation}\label{e_def_F_n}
F_n=F_n(\lambda)=\{n\lambda,-n\lambda\}\cup \{x_{\alpha_i}(\qp^{-n}).0\mid i\in I\}\cup F_{n-1}.
\end{equation}\index{F@$F_n$}

Let $n\in \N^*$. By \cite[5.7 1)]{rousseau2016groupes}, $[-n\lambda,n\lambda]$ has a good fixator and by Proposition~\ref{p_BT_7.4.8}, we have $G_{\{n\lambda,-n\lambda\}}=G_{[-n\lambda,n\lambda]}$. Therefore \begin{equation}\label{e_decomposition_GFn}
G_{F_n}\subset G_{[-n\lambda,n\lambda]}=U_{[-n\lambda,n\lambda]}^{pm+}.U_{[-n\lambda,n\lambda]}^{nm-}.\fT(\cO).
\end{equation}

We chose $F_n$ as above for the following reasons. We want that when $x\in \I$ and $n\gg 0$, an element of $G_{F_n}$ fixes $x$. By Lemma~\ref{l_BPHR}, if $u\in U^+$ (resp. $U^-$), and if $u$ fixes $-n\lambda$ (resp. $n\lambda$), for some $n$ large, then $u$ fixes $x$. However, if $t\in T$, as   $\fT(\cO)$ fixes $\A$, we need to require that $t$ fixes elements outside of $\A$, and the choice of $x_{\alpha_i}(\qp^n).0$ is justified by the lemma below.

\begin{Lemma}\label{l_condition_t_fixes_Fn}
Let $i\in I$, $n\in \N$ and $t\in T$. Then $t$ fixes $x_{\alpha_i}(\qp^{-n}).0$ if and only if $\omega\left(\alpha_i(t)-1\right)\geq n$. 
\end{Lemma}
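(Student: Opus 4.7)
The proof is a direct computation. Using the commutation relation \eqref{e_KMT4}, $t\,x_{\alpha_i}(u)\,t^{-1}=x_{\alpha_i}(\alpha_i(t)u)$, together with $t.0=\nu(t)$, I would first rewrite
\[
t.\big(x_{\alpha_i}(\qp^{-n}).0\big)\;=\;x_{\alpha_i}\big(\alpha_i(t)\qp^{-n}\big).\nu(t).
\]
Setting $a:=\alpha_i(t)$ and $y_n:=x_{\alpha_i}(\qp^{-n}).0$, the task is then to characterise when the right-hand side equals $y_n$.

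The first key step is to show that if $t$ fixes $y_n$, then necessarily $t\in\fT(\cO)$. I would apply the retraction $\rho_{+\infty}:\I\to\A$ centred at $+\infty$. For any $u\in U^+$ and $x\in\A$, the apartment $u.\A$ contains the sector-germ $+\infty$ (since $u$ fixes a subsector of $C^v_f$), so $u^{-1}$ witnesses $\rho_{+\infty}(u.x)=x$. Applying this with $u=x_{\alpha_i}(a\qp^{-n})\in U^+$ and $x=\nu(t)\in\A$, and retracting both sides of $t.y_n=y_n$, gives $\nu(t)=0$, i.e.\ $t\in\fT(\cO)$.

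With $t\in\fT(\cO)$ we have $t.0=0$, so the condition $t.y_n=y_n$ reduces to $x_{\alpha_i}(a\qp^{-n}).0=x_{\alpha_i}(\qp^{-n}).0$; left-multiplying by $x_{\alpha_i}(-\qp^{-n})$ this becomes
\[
x_{\alpha_i}\!\big((a-1)\qp^{-n}\big).0 \;=\; 0.
\]
The final ingredient is the identity $\{v\in\cK\mid x_{\alpha_i}(v)\text{ fixes }0\}=\cO$. One inclusion is immediate from $x_{\alpha_i}(\cO)=U_{\alpha_i,0}\subset U_0^{pm+}\subset G_0$; for the other, $x_{\alpha_i}(v)\in U^+\cap G_0=U_0^{pm+}$ by \eqref{e_UOmega_fixators}, and intersecting with the root subgroup $U_{\alpha_i}(\cK)$ yields $U_{\alpha_i,0}=x_{\alpha_i}(\cO)$, forcing $v\in\cO$. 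Specialising to $v=(a-1)\qp^{-n}$ converts this to $\omega(\alpha_i(t)-1)\geq n$, and reading the chain of equivalences in reverse gives the converse implication. The only conceptual step is the retraction argument reducing to $t\in\fT(\cO)$; the rest is symbolic manipulation of the $T$-normalisation of the root subgroups.
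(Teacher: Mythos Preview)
Your proposal is correct and follows essentially the same route as the paper: conjugate $x_{\alpha_i}(\qp^{-n})$ past $t$ via \eqref{e_KMT4}, apply the retraction $\rho_{+\infty}$ to force $t.0=0$, and then reduce to the characterisation of when $x_{\alpha_i}(v)$ fixes $0$. The only difference is expository: you spell out the identity $\{v\in\cK\mid x_{\alpha_i}(v).0=0\}=\cO$ via \eqref{e_UOmega_fixators} and the normal form in $U^{pma}$, whereas the paper leaves this implicit.
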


\begin{proof}
 We have $t.x_{\alpha_i}(\qp^{-n}).0=x_{\alpha_i}(\qp^{-n}).0$ if and only if $x_{\alpha_i}(-\qp^{-n})x_{\alpha_i}(\alpha_i(t)\qp^{-n}).t.0=0$. We have $\rho_{+\infty}(x_{\alpha_i}(-\qp^{-n})x_{\alpha_i}(\alpha_i(t)\qp^{-n}).t.0)=t.0$ and thus if $t.x_{\alpha_i}(\qp^{-n}).0=x_{\alpha_i}(\qp^{-n}).0$, we have $t.0=0$. Thus $t.x_{\alpha_i}(\qp^{-n}).0=x_{\alpha_i}(\qp^{-n}).0$ if and only if $x_{\alpha_i}((\alpha_i(t)-1)\qp^{-n}).0=0$ if and only if $\omega(\alpha_i(t)-1)\geq n.$
\end{proof}

For $n\in \N$, we set \begin{equation}\label{e_T_n_Phi}
T_{n,\Phi}=\{t\in T\mid \omega(\alpha_i(t)-1)\geq n, \forall i \in I\}
\end{equation}

\begin{Lemma}\label{l_T_fixes_I_for_large_valuation}
Let $y\in \I$. Then there exists $M\in \N$ such that $T_{M,\Phi}$ fixes $y$.
\end{Lemma}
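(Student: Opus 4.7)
The plan is to move $y$ into the standard apartment via the retraction $\rho_{+\infty}$, and then control the commutator of $t$ with the conjugating element inside $U^+$.

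First I would write $y = u^{-1}.z$ with $u \in U^+$ and $z = \rho_{+\infty}(y) \in \A$, which is possible since $\rho_{+\infty}(y)$ is the unique element of $U^+\cdot y \cap \A$. For $t \in T$, since $T$ normalizes $U^+$ (Lemma~\ref{l_action_T_U}), one can write $utu^{-1} = [u,t]\,t$ where $[u,t] := utu^{-1}t^{-1} \in U^+$, giving
\[
t.y \;=\; u^{-1}\,[u,t]\,(t.z).
\]
So $t$ fixes $y$ as soon as $t.z = z$ and $[u,t].z = z$, and it suffices to find $M$ guaranteeing both.

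Next, for $M \geq 1$ and $t \in T_{M,\Phi}$, the condition $\omega(\alpha_i(t)-1) \geq M$ gives $\alpha_i(t) \in \cO^*$ for every $i \in I$. By multiplicativity of characters, for any $\chi = \sum n_j \alpha_j \in Q$, one has $\chi(t) = \prod \alpha_j(t)^{n_j} \in 1 + \qp^M \cO$, so $\omega(\chi(t)-1) \geq M$ for every $\chi \in Q$. Using that $Q$ has finite index in the sublattice of $X$ relevant to the $T$-action on $\A$ near $z$, this forces $t \in \fT(\cO)$ for $M$ large, and hence $t$ fixes $\A$ pointwise; in particular $t.z = z$.

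For $[u,t].z = z$, I would expand $u$ in its Mathieu normal form $u = \prod_{\alpha \in \Delta_+} X_\alpha(v_\alpha)$ and use the conjugation rule $t X_\alpha(v) t^{-1} = X_\alpha(\alpha(t)v)$ from \eqref{e_KMT4} to compute $[u,t]$ level-by-level in the $Q_+$-graduation. The $\alpha$-component of $[u,t]$ acquires a factor $1 - \alpha(t)$ (plus higher-order commutator terms), whose $\omega$-valuation is at least $M$ by the previous estimate. Consequently, each component of $[u,t]$ has its valuation exceeding that of $u$ by at least $M$. By Lemma~\ref{l_BPHR}(2) there exists $a \in \A$ such that $U_a^{pm+} \subset G_z$, and for $M$ large enough the improved valuations place $[u,t]$ inside $U_a^{pm+}$; thus $[u,t]$ fixes $z$.

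The main obstacle will be making this commutator estimate rigorous inside the pro-unipotent completion $\fU^{pma}$: the Mathieu normal form of $u$ may involve infinitely many nonzero components, and one must verify that for every $\alpha \in \Delta_+$ the $\alpha$-component of $[u,t]$ has its valuation increased by at least $M$ uniformly, so that the resulting infinite product lies in the filter-defined subgroup $U_a^{pm+}$.
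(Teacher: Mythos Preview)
Your overall plan---pull $y$ back to $z\in\A$ via an element of $U^+$ and control the conjugation by $t$---is exactly the paper's strategy, but you overcomplicate the unipotent part and stumble on the torus part.

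For the unipotent step, the paper does \emph{not} pass to the Mathieu normal form. Since $u$ lies in the \emph{minimal} group $U^+$, which is generated by the real root subgroups $U_\alpha$ for $\alpha\in\Phi_+$, one can write $u$ as a \emph{finite} product $x_{\beta_1}(a_1)\cdots x_{\beta_k}(a_k)$ with $\beta_j\in\Phi_+$. The paper then peels off one factor at a time: from $x_{\beta_1}(-a_1)\,t\,x_{\beta_1}(a_1)=t\,x_{\beta_1}\bigl((1-\beta_1(t^{-1}))a_1\bigr)$ and the estimate $\beta_1(t)\in 1+\qp^M\cO$, Lemma~\ref{l_BPHR} makes this stray factor fix $x_{\beta_2}(a_2)\cdots x_{\beta_k}(a_k).z$ once $M$ is large, and one inducts on $k$. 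This completely sidesteps the infinite-product obstacle you correctly flag: your route through $\prod_{\alpha\in\Delta_+}X_\alpha(v_\alpha)$ would require a uniform valuation gain across infinitely many (imaginary) components of $[u,t]$, which is not obvious and is unnecessary here.

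For the torus step, your argument that $T_{M,\Phi}\subset\fT(\cO)$ for $M$ large is not valid: the condition $\alpha_i(t)\in 1+\qp^M\cO$ controls only the characters in $Q$, and when the $\alpha_i$ do not span $X$ rationally there can be $\chi\in X$ with $\omega(\chi(t))\neq 0$, so $t$ need not fix $\A$. The paper's proof actually has the same hidden base case ($k=0$ needs $t.z=z$); in practice this is harmless because every application of the lemma (Lemmas~\ref{l_GFn_eventually_fix_I} and~\ref{l_comparison_T_Tfix}) already has $t\in\fT(\cO)$. So: drop the Mathieu normal form, use the finite real-root word for $u$, and either assume $t\in\fT(\cO)$ or note that this is what the applications require.
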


\begin{proof}
By the Iwasawa decomposition ((MA III)), $y\in U^+.z$, where $z=\rho_{+\infty}(y)$.  Write $y=x_{\beta_1}(a_1)\ldots x_{\beta_k}(a_k).z$, with  $k\in \N$ and  $\beta_1,\ldots,\beta_k\in \Phi_+$. Let $t\in T$. We have $t.y=y$ if and only if \[\begin{aligned} z &= x_{\beta_k}(-a_k)\ldots x_{\beta_1}(-a_1)tx_{\beta_1}(a_1)\ldots x_{\beta_k}(a_k).z\\
&=x_{\beta_k}(-a_k)\ldots x_{\beta_2}(-a_2)t x_{\beta_1}\left((1-\beta_1(t^{-1}))a_1\right)x_{\beta_2}(a_2)\ldots x_{\beta_k}(a_k).z\end{aligned}\]

Let $M\in \N^*$. Assume that $\alpha_i(t)-1\in \qp^M \cO$ for every $i\in I$. 

Write $\beta_1=\sum_{i\in I} m_i\alpha_i$, with $m_i\in \N$ for every $i\in I$. Then $\beta_1(t)=\prod_{i\in I}\alpha_i^{m_i}(t)$. Therefore $\beta_1(t)\in 1+\qp^M\cO$. For $M\gg 0$, $x_{\beta_1}\left((1-\beta_1(t^{-1}))a_1\right)$ fixes $x_{\beta_2}(a_2)\ldots x_{\beta_k}(a_k).z$, by Lemma~\ref{l_BPHR}. By induction on $k$ we deduce that $t$ fixes $z$ for $M\gg 0$. 
\end{proof}

\begin{Lemma}\label{l_GFn_eventually_fix_I}
\begin{enumerate}
\item Let $F$ be a finite subset of $\I$. Then there exists $n\in \N^*$ such that $G_{F_n}$ fixes $F$.

\item Let $\lambda,\mu\in Y^+$ be regular. Then the filtrations $(G_{F_n(\lambda)})_{n\in \N^*}$ and $(G_{F_n(\mu)})_{n\in \N^*}$ are equivalent. 

\item The topology $\sT_{\Fix}$ is the topology associated with $\sT((G_{F_n})_{n\in \N})$. 
\end{enumerate}

\end{Lemma}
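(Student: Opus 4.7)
The overall plan is to prove (1) first---the ``eventual fixation'' property of the sequence $(G_{F_n})$---and then deduce (2) and (3) as formal consequences. The core difficulty of (1) is that the auxiliary points $x_{\alpha_i}(\qp^{-n}).0$ entering the definition of $F_n$ move with $n$, so I would decouple two roles played by $n$: introduce an intermediate integer $M$ to constrain the torus component, and then choose the filtration index $n\geq M$ much larger to control the unipotent parts, while still containing $F_M$.

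For (1), fix $y\in F$. First I would invoke Lemma~\ref{l_T_fixes_I_for_large_valuation} to choose $M\in\N^*$ such that $T_{M,\Phi}$ fixes $y$; set $y_i = x_{\alpha_i}(\qp^{-M}).0\in\I$ for each $i\in I$. Next, using Lemma~\ref{l_BPHR}(3) applied to $y$ and to each $y_i$---together with its obvious symmetric version for $U^{nm-}$ obtained by running the same argument on negative root groups---I would choose $n\geq M$ large enough that both $U_{[-n\lambda,n\lambda]}^{pm+}$ and $U_{[-n\lambda,n\lambda]}^{nm-}$ fix $y$ and every $y_i$. Since $n\geq M$ gives $F_M\subset F_n$, each $y_i$ lies in $F_n$. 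Now let $g\in G_{F_n}$; by \eqref{e_decomposition_GFn} write $g=u^+u^-t$ with $u^\pm\in U_{[-n\lambda,n\lambda]}^{pm\pm}$ and $t\in\fT(\cO)$. For each $i\in I$, both $u^+$ and $u^-$ fix $y_i$ by construction, and $g$ fixes $y_i$ since $y_i\in F_n$, so $t=(u^-)^{-1}(u^+)^{-1}g$ fixes $y_i$. Lemma~\ref{l_condition_t_fixes_Fn} then forces $\omega(\alpha_i(t)-1)\geq M$ for every $i\in I$, so $t\in T_{M,\Phi}$ and $t$ fixes $y$. Combined with $u^\pm$ fixing $y$, this gives $g.y=y$. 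Taking the maximum of the required $n$'s over the finite set $F$ yields (1).

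Part (2) then follows immediately: both $F_n(\lambda)$ and $F_n(\mu)$ are finite, so applying (1) to the $\lambda$-filtration and the finite set $F_n(\mu)$ produces $m$ with $G_{F_m(\lambda)}\subset G_{F_n(\mu)}$, and swapping $\lambda$ and $\mu$ yields the converse inclusion. For (3), the conjugation identity $G_{h.F'}=hG_{F'}h^{-1}$ gives $G_Fg=gG_{g^{-1}.F}$, so the condition ``$\exists F$ finite, $G_Fg\subset V$'' is equivalent to ``$\exists F'$ finite, $gG_{F'}\subset V$''; part (1) then rephrases this last condition as ``$\exists n$, $gG_{F_n}\subset V$''. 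Therefore $\sT_\Fix=\sT\bigl((G_{F_n})\bigr)$.

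The main obstacle is part (1). A naive attempt would fail because one cannot directly force $U_{[-n\lambda,n\lambda]}^{pm\pm}$ to fix all of $F_n$ for large $n$---the points $x_{\alpha_i}(\qp^{-n}).0$ themselves drift with $n$. The two-stage choice of parameters ($M$ first, $n\geq M$ afterwards), combined with the monotonicity $F_{n-1}\subset F_n$ that ensures the fixed ``detector'' points $y_i\in F_M$ remain inside the larger $F_n$, is what makes the argument go through.
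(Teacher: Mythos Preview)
Your proof is correct and follows essentially the same strategy as the paper's: decouple the two roles of the index by introducing an intermediate parameter, use the monotonicity $F_{M}\subset F_{n}$ to keep the ``detector'' points inside the larger $F_n$, and then read off the torus constraint via Lemma~\ref{l_condition_t_fixes_Fn}. The only cosmetic difference is the order: the paper first fixes an $n_1$ making the unipotent parts fix $y$, then for each $n\geq n_1$ picks $n_2(n)\geq n$ so the unipotent parts at level $n_2$ also fix the detector points at level $n$; you instead choose the torus threshold $M$ first and then pick $n\geq M$---arguably a cleaner presentation. Your handling of (3) is also slightly more explicit than the paper's in converting between $G_F g$ and $g G_{F'}$ via $G_F g = g G_{g^{-1}.F}$.
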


\begin{proof}
1) It suffices to prove that if $y\in \I$, then $G_{F_n}$ fixes $y$ for $n\gg 0$. 
Let $n\in \N^*$. Then $F_n\supset \{-n\lambda,n\lambda\}$ and by Proposition~\ref{p_BT_7.4.8}, we have $G_{F_n}\subset G_{[-n\lambda,n\lambda]}$.

By Lemma~\ref{l_BPHR}, there exists $n_1\in \N$ such that $U_{[-n\lambda,n\lambda]}^{pm+}$ and $U_{[-n\lambda,n\lambda]}^{nm-}$ fix $y$, for $n\geq n_1$. Let $n\geq n_1$. Let  $n_2=n_2(n)\geq n$ be such that for every $i\in I$, $\langle U_{[-n_2.\lambda,n_2.\lambda]}^{pm+},U_{[-n_2.\lambda,n_2.\lambda]}^{nm-}\rangle$ fixes $x_{\alpha_i}(\qp^{-n}).0$ for every $i\in I$.  Let $g\in G_{F_{n_2}}$. Using \eqref{e_decomposition_GFn}, we write $g=u_+u_-t$, with $(u_+,u_-,t)\in U_{[-n_2\lambda,n_2\lambda]}^{pm+}\times U_{[-n_2\lambda,n_2\lambda]}^{nm-}\times \fT(\cO)$. Then $g$ fixes $y$ if and only if $t$ fixes $y$. Moreover, $u_+ u_- t$ fixes $F_n$ and thus $t$ fixes $F_n$. By Lemma~\ref{l_condition_t_fixes_Fn}, we deduce $\omega((\alpha_i(t)-1)\geq n$ for every $i\in I$. By Lemma~\ref{l_T_fixes_I_for_large_valuation} we deduce that $t$ fixes $y$, for $n\gg 0$. Thus $G_{F_{n_2(n)}}$ fixes $y$ for $n\gg 0$. 

2) Follows from 1) by applying to $F=F_m(\mu)$ and $F_n=F_n(\lambda)$, for $m,n\in \N^*$  and by symmetry of the roles of $\lambda$ and $\mu$. 

3) As $F_n$ is finite for every $n\in \N^*$, $\sT((G_{F_n}))$ is coarser than $\sT_{\Fix}$. But by 1), $\sT_{\Fix}$ is coarser that $(\sT_{G_{F_n}})$. 
\end{proof}

\begin{Proposition}\label{p_characterization_Tfix}
The topology $\sT_\Fix$ is the coarsest topology of topological group  on $G$  such that $\fG^{\min}(\cO)$ is open.
\end{Proposition}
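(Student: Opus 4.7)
The plan is to prove the proposition in two steps: first that $\sT_{\Fix}$ is itself a topology of topological group for which $\fG^{\min}(\cO)$ is open, and then that any such topology is automatically finer than $\sT_{\Fix}$.

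For the first step, by Lemma~\ref{l_GFn_eventually_fix_I}(3) we already know $\sT_{\Fix} = \sT\bigl((G_{F_n})\bigr)$, so by the criterion recalled in \S\ref{ss_topology} it suffices to check that the filtration $(G_{F_n})_{n\in\N^*}$ is conjugation-invariant. But for every $g\in G$ and $n\in \N^*$ one has $g G_{F_n} g^{-1} = G_{g.F_n}$, and $g.F_n$ is a finite subset of $\I$, so Lemma~\ref{l_GFn_eventually_fix_I}(1) furnishes $m\in\N^*$ with $G_{F_m}\subset G_{g.F_n}=g G_{F_n} g^{-1}$ (and symmetrically for $g^{-1}$). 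That $\fG^{\min}(\cO)=G_0$ is $\sT_{\Fix}$-open is immediate from the definition, since $\{0\}$ is finite.

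For the second step, let $\sT'$ be any topology of topological group on $G$ such that $\fG^{\min}(\cO)$ is $\sT'$-open. The key observation is that every point of $F_n$ lies in the $G$-orbit of $0$: the points $x_{\alpha_i}(\qp^{-n}).0$ are trivially in $G.0$, and for $\pm n\lambda\in Y$ one can take $t=(\pm n\lambda)(\qp^{-1})\in T$ viewing $Y=\Hom(\mathbb{G}_m,\fT)$, so that $\nu(t)=\pm n\lambda$ and $t.0=\pm n\lambda$. Writing $F_n=\{g_1.0,\dots,g_k.0\}$, we get
\[
G_{F_n}=\bigcap_{j=1}^k g_j\,\fG^{\min}(\cO)\,g_j^{-1},
\]
a finite intersection of conjugates of $\fG^{\min}(\cO)$. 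Since $\sT'$ is a topological group topology, each conjugate is $\sT'$-open, hence so is $G_{F_n}$.

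Now let $V$ be $\sT_{\Fix}$-open and $v\in V$. Pick a finite $F\subset \I$ with $v\, G_F\subset V$; by Lemma~\ref{l_GFn_eventually_fix_I}(1) there is $n$ with $G_{F_n}\subset G_F$, so $v\, G_{F_n}\subset V$, and $v\, G_{F_n}$ is $\sT'$-open by left-translation. Thus $V$ is a $\sT'$-neighbourhood of each of its points, i.e.\ $\sT'$-open, proving $\sT_{\Fix}\subset \sT'$. The only point requiring care is the orbit observation $F_n\subset G.0$, which is what permits reducing $G_{F_n}$ to an intersection of conjugates of $\fG^{\min}(\cO)$; everything else is formal manipulation with the already-established equivalence $\sT_{\Fix}=\sT((G_{F_n}))$ and the topological-group criterion of \S\ref{ss_topology}.
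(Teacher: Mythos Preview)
Your proof is correct and follows essentially the same approach as the paper: express $G_{F_n}$ as a finite intersection of conjugates of $\fG^{\min}(\cO)=G_0$, using that every point of $F_n$ lies in $G.0$. Your version is in fact more complete than the paper's, since you explicitly verify that $\sT_{\Fix}$ is a topology of topological group (via conjugation-invariance of $(G_{F_n})$) and you justify why $\pm n\lambda\in G.0$, both of which the paper's proof leaves implicit.
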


\begin{proof}
For $n\in \N^*$, $G_{F_n}\subset \fG^{\min}(\cO)$ and thus $\fG^{\min}(\cO)$ is open for $\sT_\Fix$. 

Let now $\sT'$ be a topology of topological group on $G$ such that $\fG^{\min}(\cO)$ is open. Let $n\in \N^*$. Then for every element $a$ of $F_n$, there exists $g_a\in G$ such that $g_a.0=a$. Then $G_{F_n}=\bigcap_{a\in F_n} g_a.\fG^{\min}(\cO).g_a^{-1}$ is open in $G$. Proposition follows. 
\end{proof}

\subsubsection{Relation between $\sT_\Fix$ and $\sT$}

In this subsection, we compare $\sT_\Fix$ and $\sT$. We prove that $\sT$ is finer than $\sT_\Fix$. When $\cK$ is Henselian , we prove that  $\sT=\sT_{\Fix}$ if and only if the fixator of $\I$ in $G$ is $\{1\}$ (see Proposition~\ref{p_comparison_T_TFix}). 

Let $\lambda\in Y^+$ be regular. For $n\in \N$, we define $F_n=F_n(\lambda)$ as in \eqref{e_def_F_n}.

Let $\cZ=\bigcap_{n\in \N} T_{n,\Phi}$\index{Z@$\cZ$} (where the $T_{n,\Phi}$ are defined in \eqref{e_T_n_Phi}) and $\cZ_\cO=\cZ\cap \fT(\cO)$\index{Z@$\cZ_\cO$}. Then $\cZ=\{t\in T\mid \alpha_i(t)=1,\forall i\in I\}$ is the center of $G$ by \cite[8.4.3 Lemme]{remy2002groupes}.

\begin{Lemma}\label{l_fixator_I}
The fixator $G_\I$ of $\I$ in $G$ is $\cZ_\cO$ and $\bigcap_{n\in \N^*} G_{F_n}=\cZ_\cO$.
\end{Lemma}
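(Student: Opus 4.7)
The plan is to prove the two equalities separately, starting with $G_\I = \cZ_\cO$, and then deducing the intersection formula essentially for free from Lemma~\ref{l_GFn_eventually_fix_I}(1).

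For the inclusion $\cZ_\cO \subset G_\I$, I would use centrality. Any $z \in \cZ_\cO \subset \fT(\cO)$ fixes $\A$ pointwise (as noted in Section~\ref{3.1}, $\fT(\cO)$ fixes $\A$). Since $\I = G.\A$, for any $x = g.a \in \I$ with $g \in G$ and $a \in \A$, the fact that $z$ is central in $G$ (by the statement preceding the lemma referring to \cite[8.4.3 Lemme]{remy2002groupes}) gives $z.x = zg.a = gz.a = g.a = x$.

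For the reverse inclusion $G_\I \subset \cZ_\cO$, I would argue in two steps. First, an element $g \in G_\I$ stabilizes $\A$, so $g \in N$ (stabilizer of $\A$ in $G$); since $g$ acts trivially on $\A$, its image in $\nu(N) = W^v \ltimes (Y \otimes \Lambda)$ is trivial, which forces $g \in T$ and, by the description $\chi(\nu(t)) = -\omega(\chi(t))$ of translations, $\omega(\chi(g)) = 0$ for all $\chi \in X$. Hence $g \in \fT(\cO)$. Second, since $g$ also fixes $x_{\alpha_i}(\qp^{-n}).0$ for every $i \in I$ and every $n \in \N$ (these points lie in $\I$), Lemma~\ref{l_condition_t_fixes_Fn} yields $\omega(\alpha_i(g) - 1) \geq n$ for all $n$, so $\alpha_i(g) = 1$ for every $i \in I$, i.e. $g \in \cZ$. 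Combined with $g \in \fT(\cO)$, this gives $g \in \cZ_\cO$.

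For the second equality, the inclusion $\cZ_\cO \subset \bigcap_{n \in \N^*} G_{F_n}$ follows immediately from $\cZ_\cO = G_\I$ and $F_n \subset \I$. Conversely, suppose $g \in \bigcap_{n \in \N^*} G_{F_n}$. For any point $y \in \I$, Lemma~\ref{l_GFn_eventually_fix_I}(1) applied to $F = \{y\}$ provides some $n \in \N^*$ such that $G_{F_n}$ fixes $y$; thus $g$ fixes $y$. As $y$ was arbitrary, $g \in G_\I = \cZ_\cO$. The main technical input here is really the first part, and inside that, the slightly delicate point is the passage from fixing $\A$ pointwise to lying in $\fT(\cO)$ via $\ker\nu$; everything else is formal.
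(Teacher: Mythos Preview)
Your proof is correct and follows essentially the same approach as the paper's. The only cosmetic difference is that the paper obtains $G_\I \subset \fT(\cO)$ by citing directly that $G_\A = \fT(\cO)$ from \cite[5.7 5)]{rousseau2016groupes}, whereas you unpack this step via $g \in N$ and the description of $\ker\nu$; the remainder of the argument (use of Lemma~\ref{l_condition_t_fixes_Fn}, centrality for the reverse inclusion, and Lemma~\ref{l_GFn_eventually_fix_I} for the intersection) is identical.
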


\begin{proof}
We have $G_\I\subset G_\A=\fT(\cO)$, by \cite[5.7 5)]{rousseau2016groupes}. By Lemma~\ref{l_condition_t_fixes_Fn}, $G_\I\subset T_{n,\Phi}\cap \fT(\cO)$ for every $n\in \N$ and thus $G_\I\subset \cZ_\cO$. Let $z\in \cZ_\cO$ and $x\in \I$. Write $x=g.a$, with $a\in \A$. Then $z.x=gz.a=g.a=x$ and $z\in G_\I$. 

Now let $g\in \bigcap_{n\in \N^*} G_{F_n}$. Then by Lemma~\ref{l_GFn_eventually_fix_I}, $g$ fixes $\I$, which proves the lemma. 
\end{proof}

\begin{Lemma}\label{l_comparison_T_Tfix}
 There exists an increasing map $M:\N\rightarrow \N$ whose limit is  $+\infty$ and such that for every $m\in \N$, \[G_{F_n}\subset \cV_{m\lambda}. (T_{M(n),\Phi}\cap \fT(\cO)),\] for $m,n\in \N^*$ such that $M(n)\geq m$ and $n\geq m$. 
\end{Lemma}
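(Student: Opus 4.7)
Start with $g \in G_{F_n}$. Since $F_n \supset [-n\lambda, n\lambda]$ and the latter has a good fixator by \cite[5.7 1)]{rousseau2016groupes}, we may decompose $g = u_+ u_- t$ with $u_\pm \in U^{pm\pm}_{[-n\lambda, n\lambda]}$ and $t \in \fT(\cO)$, uniquely by Lemma~\ref{l_uniqueness_decomposition_UUT}. For $m \leq n$, the inclusions $U^{pm\pm}_{[-n\lambda, n\lambda]} \subset U^{pm\pm}_{[-m\lambda, m\lambda]} \subset \cV_{m\lambda}$, combined with the fact that $\cV_{m\lambda}$ is a group (Lemma~\ref{l_decomposition_Vlambda}), give $u_+ u_- \in \cV_{m\lambda}$. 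The lemma therefore reduces to constructing an increasing $M:\N\rightarrow \N$ with $M(n) \to +\infty$ such that the $t$-component of every such decomposition lies in $T_{M(n),\Phi}$.

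To constrain $t$, I would exploit the additional conditions $g.y_i = y_i$ for $y_i := x_{\alpha_i}(\qp^{-n}).0 \in F_n$, $i \in I$. Using $\Stab_G(y_i) = x_{\alpha_i}(\qp^{-n})\, G_0\, x_{\alpha_i}(-\qp^{-n})$ together with $t\, x_{\alpha_i}(\qp^{-n}) = x_{\alpha_i}(\alpha_i(t)\qp^{-n})\, t$, the fixation condition rewrites as
\[
\bigl[x_{\alpha_i}(-\qp^{-n})\, u_+ u_-\, x_{\alpha_i}(\qp^{-n})\bigr]\cdot x_{\alpha_i}\!\bigl((\alpha_i(t)-1)\qp^{-n}\bigr) \in G_0.
\]
I would then split $u_\pm$ along $\pm\alpha_i$ as $u_\pm = u_\pm^{(\alpha_i)} \cdot u_\pm^{(\neq\alpha_i)}$ in the spirit of Lemma~\ref{l_decomposition_Vlambda} and \cite[Lemma 3.3]{rousseau2016groupes}, and show, via the commutation relations in $\fG^{pma}$ and the valuation bound $\omega \geq n|\alpha(\lambda)|$ built into $U^{pm\pm}_{[-n\lambda,n\lambda]}$, that the non-$\alpha_i$ contribution $u_\pm^{(\neq\alpha_i)}$, conjugated by $x_{\alpha_i}(\pm\qp^{-n})$, still lands in $G_0$. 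What survives is a pure $\mathrm{SL}_2$-constraint inside $\varphi_i(\mathrm{SL}_2)$, which via the identity $x_{\alpha_i}(\qp^{-n}) = x_{-\alpha_i}(\qp^n)\, \alpha_i^\vee(\qp^{-n})\, x_{\alpha_i}(-\qp^n)\, \tilde{r}_i$ reduces to an explicit $2\times 2$ matrix computation: membership in $\mathrm{SL}_2(\cO)$ produces, in the $(1,2)$-entry, a term of the form $(\alpha_i(t)-\alpha_i(t)^{-1})\qp^{-n}$ whose integrality forces $\omega(\alpha_i(t)^2-1)\geq n$, hence $\omega(\alpha_i(t)-1)\geq M_i(n)$ with $M_i(n)\to +\infty$ (essentially $M_i(n)=n$, or $n/2$ in residue characteristic $2$). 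Setting $M(n):=\min_{i\in I}M_i(n)$, which still tends to $+\infty$ as $I$ is finite, and passing if necessary to an increasing minorant, gives the required map.

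The main obstacle is the control of the cross terms: conjugating $u_\pm^{(\neq\alpha_i)}$ by $x_{\alpha_i}(\pm\qp^{-n})$ produces commutators in root spaces $U_{p\alpha_i+q\beta}$ ($p,q\geq 1$) that a priori carry negative powers of $\qp$. Checking that these really are absorbed by the valuation hypotheses on $u_\pm^{(\neq\alpha_i)}$, so that the whole expression stays in $G_0$, is the computational heart of the argument; it is precisely why $F_n$ is built out of the points $x_{\alpha_i}(\qp^{-n}).0$ (and not some faster-growing analogue) and why $\cV_{n\lambda}$ carries the stronger $T_{2N(\lambda)}$-factor on the torus.
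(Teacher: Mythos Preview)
Your opening paragraph is correct and matches the paper: decompose $g=u_+u_-t$ via the good fixator of $[-n\lambda,n\lambda]$, observe $u_+u_-\in\cV_{m\lambda}$ for $m\leq n$, and reduce to bounding $t$. The divergence is in how you handle $t$, and there the proposal has a genuine gap.

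You try to extract $\omega(\alpha_i(t)-1)\geq M_i(n)$ directly from the condition that $g$ fixes $x_{\alpha_i}(\qp^{-n}).0$, by conjugating the full product $u_+u_-$ by $x_{\alpha_i}(\pm\qp^{-n})$ and arguing that the non-$\alpha_i$ pieces stay in $G_0$. You yourself flag this cross-term control as ``the main obstacle'' and do not carry it out. In the Kac-Moody setting it is not clear it can be carried out: the factors $u_\pm$ are products over all of $\Delta_+$ (including imaginary roots), the commutators with $x_{\alpha_i}(\qp^{-n})$ introduce arbitrarily high powers $\qp^{-pn}$, and there is no finite root-string bound to save you. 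Your heuristic $M_i(n)\approx n$ rests on this unproven absorption.

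The paper avoids the computation entirely by choosing $M(n)$ differently. Instead of working at level $n$, define $M(n)$ to be the largest integer such that $U_{[-n\lambda,n\lambda]}^{pm+}$ and $U_{[-n\lambda,n\lambda]}^{nm-}$ both fix $F_{M(n)}$; Lemma~\ref{l_BPHR} gives $M(n)\to+\infty$. Now $g\in G_{F_n}$ fixes $F_{M(n)}$ (as $F_{M(n)}\subset F_n$), and by construction so does $u_+u_-$; hence $t=(u_+u_-)^{-1}g$ fixes $F_{M(n)}$, in particular each $x_{\alpha_i}(\qp^{-M(n)}).0$. Lemma~\ref{l_condition_t_fixes_Fn} then gives $t\in T_{M(n),\Phi}\cap\fT(\cO)$ immediately, with no conjugation or commutator estimate required. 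The key move you are missing is to drop from level $n$ to a level where $u_+u_-$ is already known to fix the test points, rather than fighting to push $u_+u_-$ through a conjugation it was never designed to survive.
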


\begin{proof}
 Let $n\in \N^*$. Let $M(n)\in \N$ be maximum such that $U_{[-n\lambda,n\lambda]}^{nm-}$, $U_{[-n\lambda,n\lambda]}^{pm+}$, fix $F_{M(n)}$. By Lemma~\ref{l_BPHR}, $M(n)\rightarrow +\infty$.      Let $g\in G_{F_n}$.  Using \eqref{e_decomposition_GFn} we write $g=u_+u_-t$, with $u_+\in U_{[-n\lambda,n\lambda]}^{pm+}$, $u_-\in U_{[-n\lambda,n\lambda]}^{nm-}$ and $t\in \fT(\cO)$. Let $m'=M(n)$. Then $m'\leq n$ and $u_+,u_-$ fix $F_{m'}$. As $g$ fixes $F_{m'}$ we deduce $t$ fixes $F_{m'}$. By Lemma~\ref{l_T_fixes_I_for_large_valuation}, $t\in T_{m',\Phi}\cap \fT(\cO)$. Therefore  $g\in  \cV_{m\lambda}. (T_{M(n),\Phi}\cap \fT(\cO)),$ which proves the lemma. 
\end{proof}

\begin{Lemma}\label{l_choice_basis}
Let $\A^*=X\otimes \R$, $Q'=(\bigoplus_{i\in I} \Q \alpha_i)\cap X\subset \A^*$ and $d$ be the dimension of $Q'$ as a $\Q$-vector space. Then there exists a $\Z$-basis $(\chi_1,\ldots,\chi_\ell)$ of $X$ such that $(\chi_1,\ldots,\chi_d)$ is a $\Z$-basis of $Q'$.
\end{Lemma}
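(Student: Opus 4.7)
The plan is to reduce the lemma to the standard fact that a pure (saturated) subgroup of a finitely generated free abelian group admits a complementary direct summand, so that any $\Z$-basis of the subgroup can be extended to a $\Z$-basis of the whole group.

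First I would verify that $Q'$ is a saturated (a.k.a.\ pure) subgroup of $X$: if $\chi\in X$ and $n\in \Z\setminus\{0\}$ satisfy $n\chi\in Q'$, then $n\chi\in \bigoplus_{i\in I}\Q\alpha_i$, hence $\chi\in \bigoplus_{i\in I}\Q\alpha_i$ (since the latter is a $\Q$-vector space), and as $\chi\in X$ we conclude $\chi\in Q'$. Consequently the quotient $X/Q'$ is torsion-free. Since $X$ is a free $\Z$-module of finite rank $\ell$, the quotient $X/Q'$ is a finitely generated torsion-free abelian group, hence free of some rank $\ell-d'$, where $d'$ denotes the $\Z$-rank of $Q'$. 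Moreover $d'$ equals the $\Q$-dimension $d$ of $Q'\otimes_\Z\Q=\bigoplus_{i\in I}\Q\alpha_i\cap(X\otimes\Q)=\bigoplus_{i\in I}\Q\alpha_i$, so $Q'$ has $\Z$-rank exactly $d$.

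Because $X/Q'$ is free, the short exact sequence $0\to Q'\to X\to X/Q'\to 0$ splits, giving a decomposition $X=Q'\oplus X''$ as $\Z$-modules, with $X''$ free of rank $\ell-d$. Choosing any $\Z$-basis $(\chi_1,\ldots,\chi_d)$ of $Q'$ and any $\Z$-basis $(\chi_{d+1},\ldots,\chi_\ell)$ of $X''$ then yields a $\Z$-basis $(\chi_1,\ldots,\chi_\ell)$ of $X$ with the required property.

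No step is really a serious obstacle here; the only point requiring any care is the verification of purity of $Q'$ in $X$, which is immediate from the definition of $Q'$ as the intersection of $X$ with a $\Q$-subspace of $X\otimes\Q$. Everything else is a direct application of the classification of finitely generated abelian groups.
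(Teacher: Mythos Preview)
Your proof is correct and follows essentially the same approach as the paper: both arguments establish that $X/Q'$ is torsion-free (via the purity of $Q'$ in $X$), deduce that the quotient is free, and then split the resulting short exact sequence to extend a $\Z$-basis of $Q'$ to one of $X$. Your version is slightly more explicit in invoking the splitting of the exact sequence, but the substance is the same.
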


\begin{proof}
Let $x\in X$ and $n\in \Z\setminus \{0\}$. Assume that $nx\in Q'$. Then $x\in Q'$. Therefore $X/Q'$ is torsion-free. Let $(e_{d+1},\ldots,e_{\ell})\in (X/Q')^{\ell-d}$ be a $\Z$-basis of $X/Q'$. For $j\in \llbracket d+1,\ell\rrbracket$, take $\chi_j\in X$ whose reduction modulo $Q'$ is $e_j$. Choose a $\Z$-basis $(\chi_1,\ldots,\chi_d)$ of $X'$.  Then $(\chi_1,\ldots,\chi_\ell)$ satisfies the condition of the lemma. 
\end{proof}

\begin{Lemma}\label{l_decomposition_root_unity}
Assume $\cK$ to be Henselian. Let $a\in \cO$ and $m\in \N^*$. Assume  $\omega(a^m-1)>0$. Then we can write $a=b+c$, with $b\in \cO$ such that $b^m=1$ and $\omega(c)>0$. 
\end{Lemma}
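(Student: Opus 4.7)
The proof is a standard application of Hensel's lemma, with a preliminary step to circumvent the fact that the derivative of $X^m-1$ at $a$ may fail to be a unit.

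Let $p\ge 0$ denote the characteristic of the residue field $\kk=\cO/\fm$ and write $m=m_1 p^s$ with $\gcd(m_1,p)=1$ (so $s=0$ if $p=0$). The first step is to show that $\omega(a^{m_1}-1)>0$. Reducing the hypothesis modulo $\fm$ gives $(\bar a^{m_1})^{p^s}=1$ in $\kk$; since the Frobenius $x\mapsto x^p$ is injective on any field of characteristic $p>0$, this forces $\bar a^{m_1}=1$ (the case $p=0$, $s=0$ being trivial). Hence $P(a)\in\fm$, where $P(X)=X^{m_1}-1\in\cO[X]$.

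I would then apply the Henselian property of $\cK$ to $P$ at the point $a$. The derivative $P'(a)=m_1 a^{m_1-1}$ is a unit of $\cO$: indeed, $\gcd(m_1,p)=1$ implies $m_1$ is nonzero in $\kk$, and $\bar a^{m_1}=1$ implies $\bar a\neq 0$. Hensel's lemma then produces $b\in\cO$ with $P(b)=0$ and $b\equiv a\pmod{\fm}$. Setting $c=a-b$, one has $\omega(c)>0$, and since $m_1\mid m$ we get $b^m=(b^{m_1})^{p^s}=1$, as required.

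The only substantive point is the case where the residue characteristic divides $m$, which is precisely what the preliminary passage from $m$ to $m_1$ handles; after that reduction, Hensel's lemma applies immediately because $m_1$ has been made invertible in $\kk$.
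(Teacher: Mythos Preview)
Your proof is correct and follows essentially the same approach as the paper: reduce $m$ to its prime-to-$p$ part $m_1$ (the paper writes $m'$), check that $\bar a^{m_1}=1$ in $\kk$, then apply the Henselian property to $X^{m_1}-1$. The only cosmetic difference is that the paper invokes the factorization form of Hensel's lemma (lifting the coprime factorization $\overline{Z^{m_1}-1}=\overline{(Z-a)}\,Q_\kk$), whereas you invoke the simple-root form (observing $P'(a)=m_1 a^{m_1-1}\in\cO^\times$); these are equivalent here since $m_1$ coprime to $p$ makes $X^{m_1}-1$ separable over $\kk$.
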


\begin{proof}
Let $\kk=\cO/\fm$ be the residual field and $\pi_\kk:\cO\twoheadrightarrow \cO/\fm$ be the natural projection. Let $p$ be the characteristic of $\kk$. If $p=0$, we set $m'=m$ and $k=0$. If $p>0$, we write $m=p^k m'$, with $k\in \N$ and $m'\in \N$ prime to $p$. We have $ \pi_\kk(a^m)=\pi_\kk(a)^m=\pi_\kk(1)$. We have  $(\pi_\kk(a)^{m'}-1)^{p^k}=0$ and thus $\pi_\kk(a)^{m'}=\pi_\kk(1)$. Let $Z$ be an indeterminate. We have $\overline{Z^{m'}-1}=\overline{(Z-a)} Q_\kk$, where the bar denotes the reduction modulo $\fm[Z]$ and $Q_\kk\in \kk[Z]$ is prime to $\overline{Z-a}$. As $\cO$ is Henselian, we can write $Z^{m'}-1=(Z-b)Q$, where $b\in \cO$ is such that $\pi_\kk(b)=\pi_\kk(a)$ and $Q\in \cO[Z]$ is  such that $\overline{Q}=Q_\kk$. Then $\pi_\kk(b-a)=0$ and we get the lemma, with $c=b-a$. 
\end{proof}

The following lemma was suggested to me by Guy Rousseau.

\begin{Lemma}\label{l_decomposition_large_valuation}
Assume $\cK$ to be Henselian. Let $m\in \N^*$. Then there exists $K\in \N^*,K'\in \N$ such that  for every $n\in \N^*$ and  $a\in\cO$ such that $\omega(a^m-1)\geq n$, we can write $a=b+c$, with $b,c\in \cO$ such that $b^m=1$ and $\omega(c)\geq n/K-K'$. 
\end{Lemma}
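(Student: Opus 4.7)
The plan is not to rely on the specific Hensel lift of Lemma~\ref{l_decomposition_root_unity}, but instead to optimize over all $m$-th roots of unity in $\cO$. Set $R := \{b \in \cO \mid b^m = 1\}$, which is a finite set (of size at most $m$) and is nonempty by Lemma~\ref{l_decomposition_root_unity} since $n \geq 1$ forces $\omega(a^m - 1) > 0$. Choose $b^* \in R$ maximizing $\omega(a - b^*)$; this will serve as $b$ in the lemma, with $c := a - b^* \in \cO$.

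First I would factor $a^m - 1$ over $\overline{\cK}$: counting multiplicities, $a^m - 1 = \prod_\zeta (a - \zeta)$ where $\zeta$ ranges over the roots of $X^m - 1$ in $\overline{\cO}$. Since $\cK$ is Henselian, $\omega$ extends uniquely to $\overline{\cK}$, so taking valuations gives $\omega(a^m - 1) = \sum_\zeta \omega(a - \zeta) \geq n$ (the sum has total weight $m$ by degree). I would then split this sum according to whether $\zeta \in R$ or $\zeta \in \overline{\cO} \setminus \cO$ (equivalently $\zeta \notin \cK$). The $R$-terms together contribute at most $m\,\omega(a - b^*)$ by maximality of $b^*$.

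The crux is to bound each remaining term $\omega(a - \zeta)$ with $\zeta \notin \cK$ by a constant depending only on $m$. By uniqueness of the extension of $\omega$ (the point where the Henselian hypothesis is essential), every $\sigma \in \mathrm{Gal}(\cK^{\mathrm{sep}}/\cK)$ preserves $\omega$. The root $\zeta$ is separable over $\cK$ (since $\zeta^{m'} = 1$ with $m'$ the prime-to-$p$ part of $m$), so $\zeta \notin \cK$ lets us pick $\sigma$ with $\sigma(\zeta) \neq \zeta$. Using $\sigma(a) = a$ and setting $v := \omega(a - \zeta) = \omega(a - \sigma(\zeta))$, the ultrametric inequality yields
\[ v \leq \omega(\sigma(\zeta) - \zeta) = \omega(\xi - 1), \text{ where } \xi := \sigma(\zeta)/\zeta \text{ satisfies } \xi^m = 1 \text{ and } \xi \neq 1. \]
Setting $M := \max\{\omega(\xi - 1) \mid \xi \in \overline{\cO},\ \xi^m = 1,\ \xi \neq 1\}$, which is finite (only finitely many such $\xi$ exist), we obtain $\omega(a - \zeta) \leq M$ for every $\zeta \notin \cK$.

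Summing the two groups of terms, $n \leq m\,\omega(a - b^*) + mM$, so $\omega(a - b^*) \geq n/m - M$. Taking $K := m$ and $K' := \lceil M \rceil \in \N$ then proves the lemma; when $n/K - K' \leq 0$ the statement is vacuous (any $c \in \cO$ works). The hard part will be the Galois/ultrametric estimate above: without uniqueness of the valuation extension, one could not bound the contribution of the parasitic roots of unity $\zeta \in \overline{\cO} \setminus \cO$ and would be forced into a much weaker control on $\omega(a - b^*)$ than $(n-mM)/m$.
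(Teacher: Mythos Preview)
Your proof is correct and takes a genuinely different route from the paper's. The paper splits into two cases: in characteristic $p>0$ it writes $m=m'p^k$, uses Lemma~\ref{l_decomposition_root_unity} to lift a residual $m'$-th root of unity, and then a direct binomial computation shows $\omega(a^{m'}-1)=\omega(c)$, yielding $K=p^k$, $K'=0$; in characteristic $0$ it invokes Greenberg's approximation theorem (and Rousseau's extension to non-discrete valuations) as a black box. Your argument is unified across characteristics and more elementary: you factor $a^m-1$ over $\overline{\cK}$, use the Henselian hypothesis only via uniqueness of the valuation extension (hence Galois-invariance of $\omega$), and bound the contribution of each root $\zeta\notin\cK$ by the Krasner-type estimate $\omega(a-\zeta)\leq\omega(\sigma(\zeta)-\zeta)\leq M$. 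This completely sidesteps Greenberg's theorem. The price is slightly worse constants ($K=m$ rather than $K=p^k$), but that is irrelevant for the application in Lemma~\ref{l_comparison_TnPhi_Tn}. Two cosmetic remarks: the appeal to Lemma~\ref{l_decomposition_root_unity} for $R\neq\emptyset$ is unnecessary since $1\in R$; and when $m=1$ (or more generally when all $m$-th roots of unity already lie in $\cK$) the set defining $M$ is empty, in which case one simply takes $M=0$ and the non-$R$ sum vanishes.
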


\begin{proof}
We first assume that $\cK$ has characteristic $p>0$. Let $n\in \N^*$ and $a\in \cO$ be such that $\omega(a^m-1)\geq n$. Write $m=m'p^k$, with $k\in\N$ and $m'\in \N$ prime to $p$. We have $a^m-1=(a^{m'}-1)^{p^k}$ and thus $\omega(a^{m'}-1)=\omega(a^m-1)/p^k\geq n/p^k>0$. By Lemma~\ref{l_decomposition_root_unity}, we can write $a=b+c$, with $b,c\in \cO$, $b^{m'}=1$ and $\omega(c)>0$. We have $a^{m'}-1=\overline{m'}b^{m'-1}c+ \sum_{i=2}^{m'} \overline{\binom{m'}{i}} c^i b^{m-i}$, where $\overline{x}$ is the image of $x$ in $\cK$, if $x\in \Z$. As $m'$ is prime to $p$, $\overline{m'}$ is a root of $1$ and thus we have $\omega(\overline{m'})=0$. As $b^{m'}=1$, $\omega(b)=0$. Therefore $\omega(c)=\omega(\overline{m'} b^{m-1} c)< \omega(\overline{\binom{m'}{i}} b^{m'-i} c^i)$ for $i\in \llbracket 2,m'\rrbracket$. Consequently $\omega(a^{m'}-1)=\omega(c)$ and $\omega(a^m-1)=p^{k}\omega(a^{m'}-1)=p^k\omega(c)\geq n$. This proves the lemma in this case, with $K'=0$ and $K=p^k$.

We now assume that $\cK$ has characteristic $0$. Then by \cite[Theorem 1]{greenberg1966rational} and \cite[Annexe A4]{rousseau1977immeubles} (for the case where $\omega(\cK^*)$ is not discrete) applied with $F=\{Z^m-1\}$ (where $Z$ is an indeterminate), there exist $K\in \N^*$, $K'\in \N$ such that for every $n\in \N^*$, for every $a\in \cO$ such that  $\omega(a^m-1)\geq n$, we can write $a=b+c$, with $b,c\in \cO$, $b^m=1$ and $\omega(c)\geq n/K-K'$, which proves the lemma in this case.
\end{proof}

\begin{Lemma}\label{l_comparison_TnPhi_Tn}
Assume $\cK$ to be Henselian. There exist $K_1\in \R^*_+$,  $L\in \N$ such that for every $n\in \Z_{\geq L}$, $T_{n,\Phi}\cap \fT(\cO)\subset \cZ_\cO.T_{n/K_1}$.
\end{Lemma}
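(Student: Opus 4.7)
The plan is to decouple the ``root-lattice directions'' in $X$ from a complementary free piece using the basis furnished by Lemma~\ref{l_choice_basis}, then to apply Lemma~\ref{l_decomposition_large_valuation} in the first $d$ coordinates while absorbing the remaining $\ell-d$ coordinates into the central factor $z$. First I would pick a $\Z$-basis $(\chi_1,\dots,\chi_\ell)$ of $X$ with $(\chi_1,\dots,\chi_d)$ a basis of $Q'$ (Lemma~\ref{l_choice_basis}), and write $\alpha_i=\sum_{k=1}^d A_{k,i}\chi_k$ with $A_{k,i}\in\Z$. Since $Q=\bigoplus_i \Z\alpha_i$ has finite index $m:=[Q':Q]$ in $Q'$, each $m\chi_k$ (for $k\le d$) can be written as $\sum_i B_{i,k}\alpha_i$ with $B_{i,k}\in\Z$.

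Given $t\in T_{n,\Phi}\cap\fT(\cO)$, setting $a_k:=\chi_k(t)\in\cO^\times$, the fact that $1+\qp^n\cO$ is a subgroup of $\cO^\times$ yields
\[a_k^m=(m\chi_k)(t)=\prod_i \alpha_i(t)^{B_{i,k}}\in 1+\qp^n\cO\qquad(k\le d),\]
which is precisely the hypothesis of Lemma~\ref{l_decomposition_large_valuation} (the only place where Henselianness enters). This produces constants $K\in\N^*$ and $K'\in\N$ depending only on $m$ and, for each $k\le d$, a decomposition $a_k=\zeta_k(1+\epsilon_k)$ with $\zeta_k\in\cO^\times$, $\zeta_k^m=1$ and $\omega(\epsilon_k)\ge n/K-K'$. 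I would then define $z\in\fT(\cO)=\Hom(X,\cO^\times)$ by $\chi_k(z):=\zeta_k$ for $k\le d$ and $\chi_k(z):=a_k$ for $k>d$, and set $t':=z^{-1}t$; by construction $\chi_k(t')=1+\epsilon_k\in 1+\qp^{n/K-K'}\cO$ for $k\le d$ and $\chi_k(t')=1$ for $k>d$, so $t'$ lies in $T_{n/K_1}$ as soon as $K_1>K$ and $n$ is large enough.

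The central verification is that $z\in\cZ_\cO$, i.e.\ $\alpha_i(z)=1$ for every $i$. By construction $\alpha_i(z)=\prod_{k\le d}\zeta_k^{A_{k,i}}$ is a product of $m$-th roots of unity, hence itself an $m$-th root of unity; at the same time
\[\alpha_i(z)=\alpha_i(t)\prod_{k\le d}(1+\epsilon_k)^{-A_{k,i}}\in 1+\qp^{n/K-K'}\cO.\]
Now $\cK$ contains at most $m$ roots of $X^m-1$, any two of which are separated by a fixed positive valuation, so an absolute threshold $s_0\in\R_+^*$ (depending only on $m$ and on $\cK$) exists beyond which the only $m$-th root of unity in $1+\qp^{s_0}\cO$ is $1$ itself. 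Taking $L\in\N$ with $L/K-K'>s_0$ will then force $\alpha_i(z)=1$ for every $i$ and every $n\ge L$, and thus $z\in\cZ_\cO$, finishing the proof. The hardest step is this last root-of-unity rigidity argument, which must remain valid even when the residue characteristic divides $m$; but it is handled uniformly via the finiteness of $\mu_m(\cK)$, so no further structural input on $\cK$ is required beyond what Lemma~\ref{l_decomposition_large_valuation} already delivers.
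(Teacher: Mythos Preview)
Your argument is correct and follows essentially the same route as the paper's proof: split off the coordinates $\chi_k$ with $k>d$ into the center, apply Lemma~\ref{l_decomposition_large_valuation} to the first $d$ coordinates, and use the finiteness of $m$-th roots of unity in $\cK$ to force the root-of-unity factor to be central once $n$ is large. The only cosmetic differences are that you package the two central pieces (the $k>d$ part and the root-of-unity part) into a single element $z$ and work directly with $\fT(\cO)=\Hom(X,\cO^\times)$, whereas the paper writes everything via the dual cocharacter basis $(\chi_j^\vee)$; also, you pass tacitly from the additive decomposition $a_k=b_k+c_k$ of Lemma~\ref{l_decomposition_large_valuation} to the multiplicative form $a_k=\zeta_k(1+\epsilon_k)$, which the paper spells out but which is harmless since $\omega(b_k)=0$.
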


\begin{proof}
We keep the same notation as in Lemma~\ref{l_choice_basis}.  Let   $(\chi^\vee_1,\ldots,\chi_\ell^\vee) \in  Y^\ell$ be the dual basis of $(\chi_1,\ldots,\chi_\ell)$. For $i\in I$, we write $\alpha_i=\sum_{j=1}^\ell n_{j,i} \chi_j$, with $n_{j,i}\in \Z$ for all $i,j$. We have $n_{j,i}=0$ for $j\in \llbracket d+1,\ell\rrbracket$. Set $\tilde{t}=\prod_{j=d+1}^\ell \chi_j^\vee(\chi_j(t))\in \fT(\cO)$. Then $\alpha_i(\tilde{t})=1$ for every $i\in I$ and thus $\tilde{t}\in \cZ_\cO$.

 For $j\in \llbracket 1,d\rrbracket$, write $\chi_j=\sum_{i\in I} m_{i,j} \alpha_i$, with $m_{i,j}\in \Q$ for every $i\in I$. Take $m\in \N^*$ such that $m m_{i,j}\in \Z$ for every $(i,j)\in I\times \llbracket 1,d\rrbracket$. Let $j\in \llbracket 1,d\rrbracket$. We have \[\chi_j(t)^{m}=\prod_{j=1}^d \alpha_j(t)^{mm_{i,j}} \in 1+\qp^n \cO.\] Using Lemma~\ref{l_decomposition_large_valuation} we can write $\chi_j(t)=b_j+c_j$, with $b_j,c_j\in \cO$ such that $b_j^m=1$ and $\omega(c_j)\geq n/K-K'$, with the same notation as in Lemma~\ref{l_decomposition_large_valuation}. Set $c_j'=c_j b_j^{-1}\in \cO$. As $b_j$ is a root of $1$, we have $\omega(b_j)=0$ and thus $\omega(c_j)=\omega(c_j')\geq n/K-K'$. We have $b_j+c_j=b_j(1+c_j')$. 
 
  Set $t'=\prod_{j=1}^d \chi_j^\vee(b_j)$ and $t''=\prod_{j=1}^d \chi_j^\vee(1+c'_j)$.   Then $\chi_j(t')=b_j$ and $\chi_j(t'')=1+c'_j$, for $j\in \llbracket 1,d\rrbracket$.  For $i\in I$, we have $\alpha_i(t)=\alpha_i(t')\alpha_i(t'')\alpha_i(\tilde{t})=\alpha_i(t')\alpha_i(t'')$ and $\alpha_i(t'')\in 1+\qp^{n/K-K'} \cO$ (when $n/K-K'\notin \N$, $\qp^{n/K-K'}\cO$ is just a notation for $\cK_{\omega\geq n/K-K'}$). As $\alpha_i(t)\in 1+\qp^{n/K-K'} \cO$ we deduce $\alpha_i(t')\in 1+\qp^{n/K-K'} \cO$ (replacing $K$ by $K+1$ if $K\leq 1$).
  
Let $F=\{\xi\in \cO\mid \xi^m=1\}$. Then $F$ is finite. Let $L'=\max\{\omega(\xi-1)\mid \xi \in F\setminus\{1\}\}$. Let $L\in \N$ be such that $L/K-K'\geq L'$. For  $n\in \Z_{\geq L}$, we have $n/K-K'\geq L'$, we have $\alpha_i(t')=1$ for $i\in I$ and $t'\in \cZ_\cO$. Maybe increasing $L$, we can assume that $K_1:=1/K-K'/L >0$. Then for $n\geq L$, we have $n/K-K'\geq n(1/K-K'/L)\geq nK_1$.
  
  Consequently, for $n\in \Z_{\geq L}$ and $t\in T_{n,\Phi}$, we have $t=t'\tilde{t}t''$, with $t'\tilde{t}\in \cZ_\cO$ and $t''\in T_{n/K_1}$, which proves the lemma.
\end{proof}

\begin{Proposition}\label{p_comparison_T_TFix}
The topology $\sT$ is finer than $\sT_\Fix$. If $\cK$ is Henselian, then we have $\sT=\sT_\Fix$ if and only if $\cZ_\cO=\{1\}$ if and only if $\sT_{\Fix}$ is Hausdorff. 
\end{Proposition}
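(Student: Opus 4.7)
The plan is to establish the three claims in turn, using the equivalent filtration $(G_{F_n})_{n\in\N^*}$ defining $\sT_\Fix$ (Lemma~\ref{l_GFn_eventually_fix_I}) and comparing it with $(\cV_{n\lambda})_{n\in\N^*}$.

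First I would show that $\sT$ is finer than $\sT_\Fix$. By Lemma~\ref{l_GFn_eventually_fix_I}, it suffices to prove that for every $n\in\N^*$ there exists $m\in\N^*$ such that $\cV_{m\lambda}\subset G_{F_n}$. Taking $m\geq n$, the elements $\pm n\lambda$ are contained in $[-m\lambda,m\lambda]$, hence fixed by $\cV_{m\lambda}$ via \eqref{e_cV_as_fixators}. For the finitely many points $x_{\alpha_i}(\qp^{-n}).0$, Lemma~\ref{l_BPHR}(3) shows that both $U_{[-m\lambda,m\lambda]}^{pm+}$ and $U_{[-m\lambda,m\lambda]}^{nm-}$ fix them for $m$ large enough. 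For the torus factor, any $t\in T_{2mN(\lambda)}$ satisfies $\alpha_i(t)\in 1+\qp^{2mN(\lambda)}\cO$ for every $i\in I$, so by Lemma~\ref{l_condition_t_fixes_Fn} it fixes each $x_{\alpha_i}(\qp^{-n}).0$ as soon as $2mN(\lambda)\geq n$. Taking $m$ large enough gives the desired inclusion.

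Next, for the equivalences under the Henselian assumption, the implication $\sT=\sT_\Fix\Rightarrow \sT_\Fix$ Hausdorff is immediate from Theorem~\ref{t_conjugation_invariance}. For $\sT_\Fix$ Hausdorff $\Rightarrow \cZ_\cO=\{1\}$, observe that by Lemma~\ref{l_fixator_I} the intersection $\bigcap_{n\in\N^*}G_{F_n}$ equals $\cZ_\cO$; since each $G_{F_n}$ is a $\sT_\Fix$-neighbourhood of $1$ and $\cZ_\cO$ is a subgroup, $\sT_\Fix$-Hausdorffness forces $\cZ_\cO=\{1\}$.

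The substantive direction is $\cZ_\cO=\{1\}\Rightarrow \sT_\Fix$ finer than $\sT$, i.e.\ for every $m\in\N^*$ there exists $n\in\N^*$ with $G_{F_n}\subset\cV_{m\lambda}$. Starting from Lemma~\ref{l_comparison_T_Tfix}, we have $G_{F_n}\subset \cV_{m\lambda}.(T_{M(n),\Phi}\cap\fT(\cO))$ once $M(n)\geq m$ and $n\geq m$. Applying Lemma~\ref{l_comparison_TnPhi_Tn} (which is where the Henselian hypothesis enters, via Lemma~\ref{l_decomposition_large_valuation}) yields $T_{M(n),\Phi}\cap\fT(\cO)\subset\cZ_\cO.T_{M(n)/K_1}$ once $M(n)\geq L$. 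Under the assumption $\cZ_\cO=\{1\}$ this collapses to $T_{M(n),\Phi}\cap\fT(\cO)\subset T_{M(n)/K_1}$, and the latter is contained in $T_{2mN(\lambda)}\subset\cV_{m\lambda}$ whenever $M(n)/K_1\geq 2mN(\lambda)$. Since $M(n)\to+\infty$ by Lemma~\ref{l_comparison_T_Tfix}, we can choose $n$ large enough to satisfy all four inequalities simultaneously; then $G_{F_n}\subset\cV_{m\lambda}.\cV_{m\lambda}=\cV_{m\lambda}$ because $\cV_{m\lambda}$ is a group (Lemma~\ref{l_decomposition_Vlambda}). The hardest ingredient is the arithmetic Lemma~\ref{l_decomposition_large_valuation}, already provided; the remaining work is just to feed the three comparison lemmas into one another and verify the asymptotic conditions $n\geq m$, $M(n)\geq m$, $M(n)\geq L$, $M(n)\geq 2mN(\lambda)K_1$ can be met.
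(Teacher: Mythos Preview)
Your proposal is correct and follows essentially the same approach as the paper's own proof: you invoke Lemmas~\ref{l_comparison_T_Tfix} and~\ref{l_comparison_TnPhi_Tn} in the substantive direction exactly as the paper does, and the easy implications are handled identically. The only cosmetic difference is in the first part: the paper argues that $\cV_{m\lambda}$ eventually fixes any single point of $\I$ (via Lemma~\ref{l_T_fixes_I_for_large_valuation} for the torus factor), whereas you work directly with the specific points of $F_n$ using Lemma~\ref{l_condition_t_fixes_Fn}; both routes are valid and equally short.
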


\begin{proof}
 Let $x\in \I$ and $m\in \N^*$.  Then by Lemma~\ref{l_BPHR}, $U_{[-m\lambda,m\lambda]}^{pm+}$ and $U_{[-m\lambda,m\lambda]}^{nm-}$ fix $x$, for $m\gg 0$. By Lemma~\ref{l_T_fixes_I_for_large_valuation}, $T_{2m N(\lambda)}$ fix $x$ and thus $\cV_{m\lambda}$ fixes $x$ for $m\gg 0$.  Thus if $n\in \N^*$,  $\cV_{m\lambda}\subset G_{F_n}$ for $m\gg 0$ and  $\sT$ is finer than $\sT_\Fix$. 
 
 If $\sT=\sT_{\Fix}$, then by Theorem~\ref{t_conjugation_invariance}, $\sT_\Fix$ is Hausdorff. Therefore $\bigcap_{n\in \N^*}G_{F_n}=\cZ_\cO=\{1\}$ by Lemma~\ref{l_fixator_I}. 
 
 Assume $\cK$ is Henselian.  Let $m\in \N^*$. Then by Lemma~\ref{l_comparison_T_Tfix} and Lemma~\ref{l_comparison_TnPhi_Tn}, there exist $K_1\in \R^*_+$ and $L\in \N$ such that \[G_{F_n}\subset \cV_{m\lambda} .\cZ_\cO .T_{M(n)/{K_1}},\] for $n\geq \min(m,L)$, with $M(n)\underset{n\to +\infty}{\rightarrow} +\infty$. Therefore if $\cZ_\cO=1$ we have  \[G_{F_n}\subset \cV_{m\lambda}.T_{M(n)/{K_1}}\subset \cV_{m\lambda},\] for $n$ such that $M(n)/K\geq 2m N(\lambda)$, and thus $(\cV_{n\lambda})$ and $(G_{F_n})$ are equivalent, which proves the proposition.  
\end{proof}

\begin{Remark}\label{r_comparison_T_TFix}
\begin{enumerate}
\item If $(\alpha_i)_{i\in I}$ is a $\Z$-basis of $X$, then $\sT=\sT_{\Fix}$. Indeed, assume that $(\alpha_i)_{i\in I}$ is a $\Z$-basis of $X$. Let $(\chi_i^\vee)_{i\in I}$ be the dual basis. Let $n\in \N^*$ and $t\in T_{n,\Phi}\cap \fT(\cO)$. Write $t=\prod_{i\in I}\chi_i^\vee(a_i)$, with $a_i\in \cO^*$ for $i\in I$. Then $\pi_n(t)=\prod_{i\in I} \chi_i^\vee(\pi_n(a_i))$ and $\pi_n(t)=1$ if and only if $\pi_n(a_i)=1$ for all $i\in I$. Now $\alpha_i(t)=a_i$ and thus $t\in T_{n,\Phi}$ if and only if $t\in T_n$.  Therefore $\cZ_\cO= \bigcap_{n\in \N} T_n=\{1\}$. 

\item Note that by Lemma~\ref{l_comparison_T_Tfix},  the set of left $\fT(\cO)$-invariant open subsets of $G$ are the same for $\sT_\Fix$ and $\sT$. Indeed, let $V\subset G$ be a non empty left $\fT(\cO)$-invariant open subset of $G$ for $\sT_\Fix$. Then for every $v\in V$, there exists $n\in \N^*$ such that $vG_{F_n}\subset V$. By Lemma~\ref{l_comparison_T_Tfix}, $\fT(\cO). G_{F_n}\subset \cV_{n\lambda}$ and thus $V$ is open for $\sT$. 

\item Assume that $\cK$ is local. By 2), if $\tau \in \Hom_{\mathrm{Gr}}(Y,\C^*)$, then $I(\tau)_{\sT}=I(\tau)_{\sT_\Fix}$ (see \eqref{e_principal_series_representation} for the definition). Indeed, $\delta^{1/2}$ and $\tau$ are maps from $Y=T/\fT(\cO)$ to $\C^*$ and thus their extensions to $B$ are left $\fT(\cO)$-invariant. Therefore any element of $\widehat{I(\tau)}$ is left $\fT(\cO)$-invariant. 
\end{enumerate}
\end{Remark}

\section{Properties of the topologies}\label{s_Prop_topologies}

In this section, we study the properties of the topologies $\sT$ and $\sT_{\Fix}$. In \ref{ss_comparison_TKP}, we prove that when $G$ is not reductive, $\sT$ is strictly coarser than the Kac-Peterson topology on $G$ (Proposition~\ref{p_T_coarser_TKP}). In \ref{ss_Properties_usal_subgroups}, we prove that certain subgroups of $G$ are closed for $\sT$. In \ref{ss_non_compactness}, we prove that the compact subsets of $G$ have empty interior. In \ref{ss_example_affine_SL2}, we describe the topology in the case of affine $\mathrm{SL}_2$, under some assumption. 

\subsection{Comparison with the Kac-Peterson topology on $G$}\label{ss_comparison_TKP}

In \cite{kac1983regular}, Kac and Peterson defined a topology of topological group on $\fG(\C)$. This topology was then studied in \cite{hartnick2013topological} and generalized in  \cite[7]{hartnick2013topological}: Hartnick, Köhl and Mars define a topology of topological group on $\fG(\cF)$ for $\cF$ a local field (Archimedean or not), taking into account the topology of $\cF$.  The aim of this section is to prove that the topologies we defined on $G=\fG(\cK)$ are strictly coarser than the Kac-Peterson topology on $G$, unless $G$ is reductive. As $\sT_\Fix$ is coarser than $\sT$ it suffices to prove that $\sT$ is strictly coarser than $\sT_{KP}$. To that end, we prove that $\sT$ is coarser than $\sT_{KP}$ and that  the topologies induced by $\sT$ and $\sT_{KP}$ on a subset of  $B:=TU^+$\index{b@$B$}  differ, using the description of $\sT_{KP}|_B$ given in \cite[7]{hartnick2013topological}.

We assume that $\cK$ is local, in particular, $\omega(\cK^*)=\Z$. We equip $\mathrm{SL}_2(\cK)$ with the topology  associated to  $(\ker \pi^{\mathrm{SL_2}}_n)_{n\in \N^*}$, where $\pi_n^{\mathrm{SL}_2}:\mathrm{SL}_2(\cO)\rightarrow \mathrm{SL}_2(\cO/\qp^n \cO)$ is the natural projection.   We denote by $x_+$ (resp. $x_-$) the morphism of algebraic groups $a\mapsto \begin{psmallmatrix} 1 & a\\ 0 & 1\end{psmallmatrix}$ (resp. $a\mapsto \begin{psmallmatrix} 1 & 0\\ a & 1\end{psmallmatrix}$) for $a$ in a ring $\sR$.  
Using  Corollary~\ref{c_ker_pin}, it is easy to check that \[\ker \pi_n^{\mathrm{SL}_2}=x_+(\qp^n \cO).x_-(\qp^n\cO).\left(\begin{psmallmatrix} 1+\qp^n\cO & 0\\ 0 & 1+\qp^n \cO\end{psmallmatrix}\cap \mathrm{SL}_2(\cK)\right),\] and thus $(\cV_{n\lambda}^{\mathrm{SL}_2})$ is equivalent to ($\ker \pi_n^{\mathrm{SL}_2})$ for any regular $\lambda\in Y_{\mathrm{SL}_2}$. 

We equip $T$ with its usual topology $\sT_T$\index{t@$\sT_T$}, via the isomorphism $T\simeq (\cK^*)^m$, for $m$ the rank of $X$. This is the topology $\sT((\ker \pi_n|_T)_{n\in \N^*})$. As we shall see (Proposition~\ref{p_T_coarser_TKP}), this is the topology induced by $\sT$ on $T$.

For $\alpha\in \Phi_+$, let $\varphi_\alpha:\mathrm{SL}_2(\cK)\rightarrow G$ be defined by  $\varphi_{\alpha}\circ x_{\pm}=x_{\pm \alpha}$ and let $G_\alpha$\index{g@$G_\alpha$} be its image in $G$. We equip $G_\alpha$ with the quotient topology $\sT_{G_\alpha}$\index{t@$\sT_{G_\alpha}$} inherited from $\mathrm{SL}_2(\cK)$ via $\varphi_\alpha$.
Let $\Sigma=\{\alpha_i\mid i\in I\}$\index{s@$\Sigma$} and $\Sigma^{(\N)}$ be the set of finite sequences of elements of $\Sigma$. For $\underline{\alpha}=(\alpha_0,\ldots,\alpha_k)\in \Sigma^{(\N)}$, where $k\in \N^*$, one sets $G_{\underline{\alpha}}=G_{\alpha_0}\ldots G_{\alpha_k}\subset G$. Note that $G_{\underline{\alpha}}$ is not a subgroup of $G$ in general. If $\underline{\alpha},\underline{\beta}\in \Sigma^{(\N)}$, we write $\underline{\alpha}\leq \underline{\beta}$ if $\underline{\alpha}$ appears as an ordered subtuple of $\underline{\beta}$. Then $\leq$ is a preorder on  $\Sigma^{(\N)}$ and $(\Sigma^{(\N)},\leq)$ is a directed poset.

We equip $TG_{\underline{\alpha}}$ with the topology $\sT_{TG_{\underline{\alpha}}}$ obtained as the quotient topology with respect to the multiplication map 
\begin{equation}\label{e_multiplication}
m_{\underline{\alpha}}:(T,\sT_T) \times (G_{\alpha_1},\sT_{G_{\alpha_1}})\times \ldots \times  (G_{\alpha_k},\sT_{G_{\alpha_k}})\twoheadrightarrow TG_{\underline{\alpha}}.
\end{equation}

In \cite[Definition 7.8]{hartnick2013topological}, the authors define the Kac-Peterson topology $\sT_{KP}$ on $G$ as the direct limit of the directed system $\{(TG_{\underline{\alpha}},\sT_{TG_{\underline{\alpha}}})\mid \underline{\alpha}\in \Sigma^{(\N)}\}$. In other words, a subset $V$ of $G$ is open for $\sT_{KP}$ if and only if $V\cap TG_{\underline{\alpha}}$ is open for every $\underline{\alpha}\in \Sigma^{(\N)}$.

\begin{Lemma}\label{l_weak_order_non_bounded}
Let $w\in W^v$. Assume that $wr_i<w$ for every $i\in I$ (for the Bruhat order on $W^v$). Then $W^v$ is finite.
\end{Lemma}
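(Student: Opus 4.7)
The plan is to show that the hypothesis forces $w$ to send every positive real root to a negative one, which in turn forces $\Phi_+$ to be finite and thereby $W^v$ to be finite.

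First I would translate the descent hypothesis into a root-theoretic statement. By standard Coxeter theory (see e.g.\ \cite[Lemma 1.3.13]{kumar2002kac}), for any $v \in W^v$ and $i \in I$ one has $vr_i < v$ in the Bruhat order if and only if $v.\alpha_i \in \Phi_-$. Applied to $v = w$ for every $i$, the hypothesis of the lemma yields $w.\alpha_i \in \Phi_-$ for all $i \in I$.

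Next I would upgrade this from simple roots to all of $\Phi_+$. Writing an arbitrary $\alpha \in \Phi_+$ as $\alpha = \sum_{i \in I} c_i \alpha_i$ with $c_i \in \Z_{\geq 0}$, we have $w.\alpha = \sum_{i \in I} c_i\, w.\alpha_i$, which lies in $-Q_+$ since each $w.\alpha_i$ does. Because $\Phi$ is $W^v$-stable, $w.\alpha \in \Phi \cap (-Q_+) = \Phi_-$, so $w.\Phi_+ \subset \Phi_-$, equivalently $\Phi_+ \subset w^{-1}.\Phi_-$.

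Finally, by \cite[Lemma 1.3.14]{kumar2002kac} (already used in the proof of Lemma~\ref{l_fiber_retraction}), the inversion set $\Phi_+ \cap v^{-1}.\Phi_-$ of any $v \in W^v$ has cardinality $\ell(v)$. Applied to $v = w$ together with the inclusion just obtained, this gives $|\Phi_+| = \ell(w) < +\infty$. Applied to an arbitrary $v$, it gives $\ell(v) = |\Phi_+ \cap v^{-1}.\Phi_-| \leq |\Phi_+| = \ell(w)$, so the length function on $W^v$ is uniformly bounded. Since the finitely generated Coxeter group $W^v$ contains only finitely many elements of each fixed length, it is finite.

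I do not anticipate a serious obstacle: the result is essentially the classical characterization of the longest element of a finite Coxeter group as the unique element admitting every simple reflection as a right descent. The only step requiring a moment of care is the promotion from simple roots to arbitrary positive roots, which relies on $\Phi_+ \subset Q_+$ and $\Phi_- \subset -Q_+$ together with $\Phi = \Phi_+ \sqcup \Phi_-$ (all standard facts from \cite{kumar2002kac}).
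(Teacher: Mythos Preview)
Your proof is correct. Both you and the paper begin identically, invoking \cite[Lemma 1.3.13]{kumar2002kac} to translate the descent hypothesis into $w.\alpha_i\in\Phi_-$ for all $i\in I$, but then diverge. The paper proceeds geometrically: it picks $\lambda\in C^v_f$, observes that $\alpha_i(w^{-1}.\lambda)=(w.\alpha_i)(\lambda)<0$ for all $i$, so $w^{-1}.\lambda\in -C^v_f$, hence $\lambda\in\mathring{\T}\cap(-\mathring{\T})$; it then invokes \cite[Proposition 1.4.2]{kumar2002kac} on the Tits cone to conclude that $\Phi$ (and thus $W^v$) is finite. You instead stay entirely within Coxeter combinatorics: you promote $w.\alpha_i\in\Phi_-$ to $w.\Phi_+\subset\Phi_-$ via the $Q_+$-grading, then use the inversion-set count $|\Phi_+\cap v^{-1}.\Phi_-|=\ell(v)$ to get $|\Phi_+|=\ell(w)$ and a uniform bound on lengths. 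Your route is arguably more self-contained, avoiding any appeal to the geometry of the Tits cone; the paper's route is shorter once the Tits-cone machinery is available and ties in naturally with the ambient setting of the paper, where $\T$ has already been introduced.
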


\begin{proof}
By \cite[1.3.13 Lemma]{kumar2002kac}, we have $w.\alpha_i\in \Phi_-$ for every $i\in I$. Let $\lambda\in C^v_f$. Then $\alpha_i(w^{-1}.\lambda)<0$ for every $i\in I$ and thus $w^{-1}.\lambda\in -C^v_f$. Thus $\lambda\in \mathring{\T}\cap -\mathring{\T}$. By \cite[1.4.2 Proposition]{kumar2002kac} we deduce that $\Phi$ is finite and thus $W^v$ is finite. 
\end{proof}

We equip $W^v$ with the \textbf{right weak Bruhat order} $\preceq$: for every $v,w\in W^v$, $v\preceq w$ if $\ell(v)+\ell(v^{-1}w)=\ell(w)$. We assume that $W^v$ is infinite.  By Lemma~\ref{l_weak_order_non_bounded}, there exists a sequence $(w_i)_{i\in \N}\in (W^v)^\N$ such that $w_0=1$, $\ell(w_{i+1})=\ell(w_i)+1$ and $w_i\preceq w_{i+1}$ for every $i\in \N$. 

For $w\in W^v$, one sets $\Inv(w)=\{\alpha\in \Phi_+\mid w^{-1}.\alpha\in \Phi_-\}$\index{I@$\Inv(w)$}. Let $U_w=\langle U_\alpha\mid \alpha\in \Inv(w)\}$. By \cite[Lemma 5.8]{caprace2009group}, if $w=r_{i_1}\ldots r_{i_k}$, with $k=\ell(w)$ and $i_1,\ldots,i_k\in I$, then $U_w=U_{\alpha_{i_1}}.U_{r_{i_1}.\alpha_{i_2}}\ldots U_{r_{i_1}\ldots r_{i_{k-1}}.\alpha_{i_k}}$ and every element of $U_w$ admits  a unique decomposition in this product. By \cite[Proposition 7.27]{hartnick2013topological}, as a topological space, $B$ is the colimit $\lim_{\rightarrow} TU_w$ (note that $(W^v,\preceq)$ is not directed). Let $U'=\bigcup_{n\in \N}U_{w_n}$. Then the topology induced on $TU'$ by $\sT_{KP}$ is the topology of the direct limit $\lim_{\rightarrow} TU_{w_n}$: a subset $V$ of $TU'$ is open if and only if $V\cap TU_{w_n}$ is open for every $n\in \N$.

For $n\in \N$, write $w_{n+1}=w_n r_i$, where $i\in I$. Set $\beta[n]=w_n.\alpha_i$. Then $\Inv(w_n)=\{\beta[i]\mid i\in \llbracket 1,n\rrbracket\}$, by \cite[1.3.14 Lemma]{kumar2002kac}.

By \cite[Lemma 7.26]{hartnick2013topological}, if $n\in \N^*$ then  the map $m=m_n:T\times(\cK)^n\rightarrow T U_{w_n}$ defined by $m(t,a_1,\ldots,a_n)=tx_{\beta[1]}(a_1)\ldots x_{\beta[n]}(a_n)$ is a homeomorphism, when $TU_{w_n}$ is equipped with the the restriction of $\sT_{KP}$.

Recall that $\sT=\sT\left( (\cV_{n\lambda})\right)$ for any $\lambda\in Y^+\cap C^v_f$.

Define $\htt:Q_+=\bigoplus_{i\in I} \Z \alpha_i \rightarrow \Z$ by $\htt(\sum_{i\in I} n_i\alpha_i)=\sum_{i\in I} n_i$, for $(n_i)\in \Z^I$.

\begin{Lemma}\label{l_nonequality_topologies}
Assume that $W^v$ is infinite. For $n\in  \N^*$, set   $V_n=T\prod_{i=1}^n x_{\beta[i]}(\qp^{(\htt(\beta[i])!} \cO)$ and set $V=\bigcup_{n\in \N} V_n$. Then $V$ is open in $(TU',\sT_{KP})$ but not in $(TU',\sT)$. In particular, $\sT$ and $\sT_{KP}$ are different. 
\end{Lemma}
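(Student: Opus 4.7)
My plan is to handle each assertion by intersecting with the subgroups $TU_{w_k}$ and using the uniqueness of the decomposition therein. For openness in $\sT_{KP}$, I would observe $V_n\subset V_{n+1}$ (take the extra coordinate to be $0$), so $V$ is an ascending union, then verify that $V\cap TU_{w_k}=V_k$: the non-trivial inclusion follows from the uniqueness of the decomposition in $U_{w_n}$ (\cite[Lemma 5.8]{caprace2009group}) together with $T\cap U^+=\set{1}$. Via the homeomorphism $m_k$, the set $V_k$ corresponds to $T\times\prod_{i=1}^k \qp^{\htt(\beta[i])!}\cO$, which is open in $T\times\cK^k$. Since $(TU',\sT_{KP}|_{TU'})$ carries the direct-limit topology from the $TU_{w_n}$, this proves $V$ is open.

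To show $V$ is not open for $\sT$, I would argue by contradiction. If $V$ were open, then since $1\in V$ and $(\cV_{n\lambda})_{n\in\N^*}$ is a basis of neighbourhoods of $1$, there would exist $n\in\N^*$ with $\cV_{n\lambda}\cap TU'\subset V$. I would then exhibit a violating element of the form $x_{\beta[m]}(\qp^k)$ for well-chosen $m,k$. On the one hand, Lemma~\ref{l_uniqueness_decomposition_UUT} and the identity $T\cap U^+=\set{1}$ give $\cV_{n\lambda}\cap TU^+=T_{2nN(\lambda)}.U_{[-n\lambda,n\lambda]}^{pm+}$, and then the normal form \eqref{e_normal_form_Upma} in $\fU^{pma}$ translates $x_{\beta[m]}(\qp^k)\in\cV_{n\lambda}$ into the condition $k\geq f_{[-n\lambda,n\lambda]}(\beta[m])=n\beta[m](\lambda)$. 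On the other hand, uniqueness of the decomposition in $TU_{w_m}$ shows that $x_{\beta[m]}(\qp^k)\in V$ if and only if $k\geq \htt(\beta[m])!$.

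The main obstacle is to produce a gap between these two constraints. The $\beta[m]$ enumerate $\bigcup_n \Inv(w_n)$ without repetition, and the set of positive roots of height bounded by $H$ is finite (since $Q_+=\bigoplus_{i\in I}\N \alpha_i$ with $(\alpha_i)$ a $\Z$-basis of $Q$), so $\htt(\beta[m])\to +\infty$. Combined with the elementary bound $\beta[m](\lambda)\leq \htt(\beta[m])\cdot\max_{i\in I}\alpha_i(\lambda)$, this yields $n\beta[m](\lambda)<\htt(\beta[m])!$ for $m$ large enough, and one can then pick an integer $k$ with $n\beta[m](\lambda)\leq k<\htt(\beta[m])!$. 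The element $x_{\beta[m]}(\qp^k)$ lies in $\cV_{n\lambda}\cap TU'$ but not in $V$, yielding the desired contradiction. The key insight throughout is the mismatch between the linear growth coming from $\cV_{n\lambda}$ and the factorial growth built into $V$; everything else is bookkeeping with normal forms and uniqueness statements.
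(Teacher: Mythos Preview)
Your proof is correct and follows essentially the same strategy as the paper's: both halves hinge on the uniqueness of the decomposition in $TU_{w_n}$ (via \cite[Lemma 7.26]{hartnick2013topological}) and on the mismatch between the linear growth of $n\beta[m](\lambda)$ and the factorial growth of $\htt(\beta[m])!$. The paper fixes the particular $\lambda\in C^v_f$ with $\alpha_i(\lambda)=1$ (so that $\beta(\lambda)=\htt(\beta)$) and exhibits a whole product $\prod_{i=1}^k x_{\beta[i]}(\qp^{n\htt(\beta[i])})$ rather than a single $x_{\beta[m]}(\qp^k)$, but these are cosmetic differences; just make explicit that you take $\lambda\in C^v_f$ so that your identity $f_{[-n\lambda,n\lambda]}(\beta[m])=n\beta[m](\lambda)$ holds as written.
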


\begin{proof}
Let $n\in \N^*$. Let $v\in V\cap TU_{w_n}$ and choose $k\in \N^*$ such that $v\in V_k$. If $k\leq n$, then $v\in V_k\subset V_n$. Suppose now $k\geq n$. Write $v=t\prod_{i=1}^k x_{\beta[i]}\left(\qp^{(\htt(\beta_i)!}a_i\right)$, with $a_1,\ldots,a_k\in \cO$ and $t\in T$. By \cite[Lemma 7.26]{hartnick2013topological}, we have $a_i=0$ for every $i\in \llbracket n+1,k\rrbracket$ and thus $v\in V_n$. Therefore $V\cap TU_{w_n}=V_n$. By \cite[Lemma 7.26]{hartnick2013topological}, $V_n$ is open in $TU_{w_n}$ and thus $V$ is open in $(TU',\sT_{KP})$.

 Let $\lambda\in C^v_f\cap Y$ be such that $\alpha_i(\lambda)=1$ for every $i\in I$. Let us prove that for every $n\in \N^*$, $U'\cap U_{-n\lambda}^{pm+}$ is not contained in $V$. For $k,n\in \N^*$, set $x_{k,n}=\prod_{i=1}^k x_{\beta[i]}(\qp^{n\htt(\beta[i])})\in U'\cap U_{-n\lambda}^{pm+}$. Let $n\in \N^*$. By \cite[Lemma 7.26]{hartnick2013topological}, if $x_{k,n}\in V$, then $n\htt(\beta[i])\geq \left(\htt(\beta[i])\right)!$, for every $i\in \llbracket 0,k\rrbracket$. As $\htt(\beta[i])\underset{i\to +\infty}{\longrightarrow} +\infty$, there exists $k\in \N$ such that $x_{k,n}\in U'\cap U_{-n\lambda}^{pm+}\setminus V$ and thus, $U'\cap U_{-n\lambda}^{pm+}\not\subset V$. Using Lemma~\ref{l_uniqueness_decomposition_UUT} we deduce that  there exists no $n\in \N^*$ such that $\cV_{n\lambda}\cap TU'\subset V$ and thus $V$ is not open for $\sT$. 
\end{proof}

\begin{Proposition}\label{p_sufficient_condition_coarser}
Let $\underline{V}=(V_n)_{n\in \N^*}$ be a conjugation-invariant filtration of $G$. Let $\sT_{\underline{V}}$ be the associated topology on $G$. We assume that for every $\alpha\in \Sigma$, the induced topology on $G_{\alpha}$ is $\sT_{G_{\alpha}}$ and that the induced topology on $T$ is $\sT_T$. Then $\sT_{\underline{V}}$ is coarser than $\sT_{KP}$.
\end{Proposition}

\begin{proof}
Let $n\in \N^*$. Let us prove that $V_n$ is open for $\sT_{KP}$. Let $\underline{\alpha}=(\alpha_k,\ldots,\alpha_1)\in \Sigma^{(\N)}$. Let us prove that $m_{\underline{\alpha}}^{-1}(V_n)$ is open in $T\times G_{\alpha_k}\times \ldots \times G_{\alpha_1}$, with the notation of \eqref{e_multiplication}. Let $v\in TG_{\underline{\alpha}}\cap V_n$ and  $(t,v_k,\ldots,v_1)\in m_{\underline{\alpha}}^{-1}(\{v\})$. We have  $v=tv_k \ldots v_1$. We set $n_1=n$ and we choose $n_2,\ldots,n_k,n_{k+1}\in \N^*$ such that for all $j\in \llbracket 2,k\rrbracket$, we have $V_{n_j}v_{j-1}\ldots v_1\subset v_{j-1}\ldots v_1 V_n$, which is possible since $\underline{V}$ is conjugation-invariant. Then we have: \begin{align*}
V_{n_{k+1}}v_kV_{n_k} v_{k-1}V_{n_{k-1}}\ldots V_{n_3} v_2 V_{n_2}v_1 V_{n_1}&\subset V_{n_{k+1}}v_kV_{n_k} v_{k-1}V_{n_{k-1}}\ldots V_{n_3} v_2 v_1 V_n\\ &\subset \ldots \subset v_k\ldots v_1 V_n .\end{align*} Consequently \[\left(t V_{n_{k+1}}\cap T\right)\left(v_kV_{n_k}\cap G_{\alpha_k}\right) \ldots \left(v_1 V_{n_1}\cap G_{\alpha_1}\right)\subset tv_k\ldots v_1V_n=v V_n=V_n\] and hence \[m_{\underline{\alpha}}\left(\left(t V_{n_{k+1}}\cap T\right)\times\left(v_kV_{n_k}\cap G_{\alpha_k}\right) \times\ldots\times  \left(v_1 V_{n_1}\cap G_{\alpha_1}\right)\right)\subset V_n\cap TG_{\underline{\alpha}}.\] Therefore $m_{\underline{\alpha}}^{-1}(V_n\cap TG_{\underline{\alpha}})$ is open or equivalently $V_n\cap TG_{\underline{\alpha}}$ is open. As this is true for every $\underline{\alpha}\in \Sigma^{(\N)}$, we deduce that $V_n$ is open for $\sT_{KP}$. As $(G,\sT_{KP})$ is a topological group, we deduce that $xV_n$ is open in $\sT_{KP}$ for every $x\in G$ and $n\in \N^*$ and we deduce that $\sT_{\underline{V}}$ is coarser than $\sT_{KP}$.
\end{proof}

\begin{Proposition}\label{p_T_coarser_TKP}
\begin{enumerate}
\item Let $\alpha\in \Phi_+$ and $\varphi_\alpha:\mathrm{SL}_2(\cK)\rightarrow G$ be the group morphism defined by $\varphi_\alpha\circ x_{\pm} =x_{\pm\alpha}$. Fix a basis $(\chi_1^\vee,\ldots,\chi_\ell^\vee)$ of $Y$ and define $\iota:(\cK^*)^\ell\overset{\sim}{\rightarrow} T\subset G$ by $\iota\left((a_1,\ldots,a_\ell)\right)=\chi_1^\vee(a_1)\ldots  \chi_\ell^\vee(a_\ell)$, for $a_1,\ldots,a_\ell\in \cK^*$. Then the $\varphi_\alpha$, $\alpha\in \Phi$ and $\iota$ are continuous when $G$ is equipped with $\sT$.

\item The topology induced by $\sT$ on $T$ is $\sT_T$ and if $\alpha\in \Phi_+$, then the topology induced by  $\sT$ on $G_\alpha$ is $\sT_{G_\alpha}$

\end{enumerate}
\end{Proposition}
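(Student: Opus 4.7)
The plan is as follows. Both assertions reduce to continuity at the identity, since $\varphi_\alpha$ and $\iota$ are group morphisms and $(G,\sT)$ is a topological group by Theorem~\ref{t_conjugation_invariance}. I fix $\lambda\in Y\cap C^v_f$ regular, so that the $\cV_{n\lambda}$, $n\in\N^*$, form a basis of neighborhoods of $1$ in $(G,\sT)$.

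For the continuity of $\varphi_\alpha$ in part~(1), the explicit formula
\[
\ker\pi_m^{\mathrm{SL}_2}=x_+(\qp^m\cO)\cdot x_-(\qp^m\cO)\cdot\alpha_0^\vee(1+\qp^m\cO)
\]
recalled just before the proposition yields
\[
\varphi_\alpha(\ker\pi_m^{\mathrm{SL}_2})\subset x_\alpha(\qp^m\cO)\cdot x_{-\alpha}(\qp^m\cO)\cdot\alpha^\vee(1+\qp^m\cO).
\]
For $m\geq\max(n|\alpha(\lambda)|,\,2nN(\lambda))$ the three factors lie respectively in $U^{pm+}_{[-n\lambda,n\lambda]}$, $U^{nm-}_{[-n\lambda,n\lambda]}$ and $T_{2nN(\lambda)}$, so $\varphi_\alpha(\ker\pi_m^{\mathrm{SL}_2})\subset\cV_{n\lambda}$. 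For $\iota$, Lemma~\ref{l_uniqueness_decomposition_UUT} applied to the decomposition $\cV_{n\lambda}=U^{pm+}_{[-n\lambda,n\lambda]}\cdot U^{nm-}_{[-n\lambda,n\lambda]}\cdot T_{2nN(\lambda)}$ gives $\cV_{n\lambda}\cap T=T_{2nN(\lambda)}$, and requiring each $a_i\in 1+\qp^{2nN(\lambda)}\cO$ forces $\iota(a_1,\ldots,a_\ell)\in T_{2nN(\lambda)}\subset\cV_{n\lambda}$.

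For part~(2), recall that $\sT_{KP}$ is characterized in \cite{hartnick2013topological} among topologies of topological group on $G$ as the finest such topology for which the natural inclusions of the root subgroups $U_\alpha$ and of the torus $T$, with their usual topologies, are continuous (equivalently, for which the maps $\varphi_\alpha$ and $\iota$ are continuous). Part~(1) shows that $\sT$ satisfies these continuity conditions, hence $\sT$ is coarser than $\sT_{KP}$. If $\Phi$ is infinite, then $W^v$ is infinite and the hypothesis of Lemma~\ref{l_nonequality_topologies} is met, producing a subset of $TU'\subset B$ that is $\sT_{KP}$-open but not $\sT$-open; therefore $\sT\subsetneq\sT_{KP}$.

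The delicate point is the precise formulation of the universal property of $\sT_{KP}$ invoked in the second step. If \cite{hartnick2013topological} instead constructs $\sT_{KP}$ concretely via the colimit description $B=\lim_{\to}TU_w$ (and symmetrically for $B^-$), the coarseness $\sT\subseteq\sT_{KP}$ can be argued directly: for each $n\in\N^*$, one shows that $\cV_{n\lambda}$ contains a $\sT_{KP}$-open neighborhood of $1$ by combining the analogous statement for each factor $U^{pm+}_{[-n\lambda,n\lambda]}$, $U^{nm-}_{[-n\lambda,n\lambda]}$, $T_{2nN(\lambda)}$, itself obtained from the continuity results of part~(1).
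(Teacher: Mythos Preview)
Your proof is correct, and for part~(2) it is essentially the paper's argument: invoke the universal property of $\sT_{KP}$ from \cite[Proposition 7.21]{hartnick2013topological} together with part~(1) to get $\sT\subset\sT_{KP}$, then Lemma~\ref{l_nonequality_topologies} for strict inequality. Your caveat about the precise formulation in \cite{hartnick2013topological} is honest; the paper simply cites Proposition~7.21 there, which does give exactly the characterization you describe.

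For part~(1), your route differs from the paper's and is more economical. The paper computes the \emph{preimage} $\varphi_\alpha^{-1}(\cV_\lambda)$ exactly: it uses the Birkhoff decompositions of $\mathrm{SL}_2(\cK)$ and of $G$ to rule out the nontrivial Weyl component, then Lemma~\ref{l_uniqueness_decomposition_UUT} to identify each factor, obtaining $\varphi_\alpha^{-1}(\cV_\lambda)=x_+(\cK_{\omega\geq|\alpha(\lambda)|})\,T_{\mathrm{SL}_2,2N(\lambda)}\,x_-(\cK_{\omega\geq|\alpha(\lambda)|})$. You instead push \emph{forward}: using the product decomposition of $\ker\pi_m^{\mathrm{SL}_2}$ stated just before the proposition, you land each factor in the corresponding factor of $\cV_{n\lambda}$ once $m\geq\max(n|\alpha(\lambda)|,2nN(\lambda))$. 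This avoids Birkhoff entirely and gives continuity at $1$ with no extra work; the paper's computation yields the exact preimage as a byproduct, but that is not needed for the statement. Your treatment of $\iota$ is the same as the paper's.
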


\begin{proof}
Let $\alpha\in \Phi$ and $\lambda\in Y^+$ be regular. Let $g\in \varphi_\alpha^{-1}(\cV_\lambda)$. By \cite[3.16]{rousseau2016groupes} and \cite[1.2.4 Proposition]{remy2002groupes}, we have the Birkhoff decomposition in $\mathrm{SL}_2(\cK)$ (where $N_{\mathrm{SL}_2}$ is the set of monomial matrices with coefficient in $\cK^*$) and $G$: \[\mathrm{SL}_2(\cK)=\bigsqcup_{n\in N_{\mathrm{SL_2}}} x_+(\cK) n x_-(\cK) \text{ and }G=\bigsqcup_{n\in N} U^+ n U^-.\] Let $n\in N_{\mathrm{SL}_2}$  be such that $g\in x_+(\cK) n x_-(\cK)$. If $n\notin T_{\mathrm{SL}_2}$, then $\varphi_\alpha(g)\in U^+ \varphi_\alpha(n) U^-$ and $\nu^v(\varphi_\alpha(n))$ acts as the reflection with respect to $\alpha$ on $\A$. Then $\varphi_\alpha(g)\notin U^+ T U^-$ which contradicts Corollary~\ref{c_ker_pin}. Therefore $n\in T$. Write $g=x_+(a_+) n x_-(a_-)$, with $a_+,a_-\in \cK$. Then by Lemma~\ref{l_uniqueness_decomposition_UUT}, we have $x_\alpha(a_+)\in U_{[-\lambda,\lambda]}^{pm+}$, $x_{-\alpha}(a_-)\in U_{[-\lambda,\lambda]}^{nm-}$ and $\varphi_\alpha(n)\in T_{2N(\lambda)}$. Consequently, $\omega(a_+), \omega(a_-)\geq |\alpha(\lambda)|$ and \[t\in T_{\mathrm{SL_2},2N(\lambda)}:=\begin{psmallmatrix} 1+\qp^{2N(\lambda)}\cO & 0 \\ 0 & 1+\qp^{2N(\lambda)} \cO\end{psmallmatrix}\cap \mathrm{SL}_2(\cK).\] Therefore $\varphi_\alpha^{-1}(\cV_{\lambda})\subset x_{+}(\cK_{\omega\geq |\alpha(\lambda)|})  T_{\mathrm{SL_2},2N(\lambda)} x_-(\cK_{\omega\geq |\alpha(\lambda)|})$. Conversely, $\varphi_\alpha(x_-(\cK_{\omega\geq |\alpha(\lambda)|}))$, $\varphi_\alpha(x_+(\cK_{\omega\geq |\alpha(\lambda)|})$, $\varphi_\alpha(T_{\mathrm{SL_2},2N(\lambda)})\subset \cV_{\lambda}$ and thus $\varphi_\alpha^{-1}(\cV_{\lambda})= x_{+}(\cK_{\omega\geq |\alpha(\lambda)|})  T_{\mathrm{SL_2},2N(\lambda)} x_-(\cK_{\omega\geq |\alpha(\lambda)|})$ is open in $\mathrm{SL}_2(\cK)$. Therefore $\varphi_\alpha$ is continuous and $\cV_\lambda\cap G_\alpha$ is open. 

Let $n\in\N^*$. Then $\iota^{-1}(T_n)=(1+\qp^n\cO)^\ell$ and thus $\iota$ is continuous. Moreover, we have $T\cap \cV_\lambda=T_{2N(\lambda)}$, by the Birkhoff decomposition, which proves that $\sT$ induces $\sT_T$ on $T$.
\end{proof}

\begin{Corollary}\label{c_T_different_TKP}
If $\Phi$ is infinite, the topologies $\sT$ and $\sT_{\mathrm{Fix}}$ are strictly coarser than $\sT_{KP}$.
\end{Corollary}

\begin{proof}
Let $\lambda\in Y^+$ be any regular element. By Propositions~\ref{p_T_coarser_TKP} and \ref{p_sufficient_condition_coarser}, applied with $\underline{V}=(\cV_{n\lambda})$, $\sT$ is coarser than $\sT_{KP}$. By Lemma~\ref{l_nonequality_topologies}, $\sT$ is different from $\sT_{KP}$. As $\sT_{\mathrm{Fix}}$ is coarser than $\sT$ (by Proposition~\ref{p_comparison_T_TFix}), we deduce the result.
\end{proof}

\subsection{Properties of usual subgroups of $G$ for $\sT$ and $\sT_\Fix$}\label{ss_Properties_usal_subgroups}

In this subsection, we prove that many subgroups important in this theory (such as $B$, $T$, $U_\alpha$, $\alpha\in \Phi$, etc.) are open or closed. We have $\cT_{\Fix}\subset \cT$ and thus every subset of $G$ open or closed for $\cT_{\Fix}$ is open or closed for $\cT$. As the Kac-Peterson topology $\sT_{KP}$ is finer than $\sT$, this improves the corresponding results of \cite{hartnick2013topological}. Note that we consider $B=B^+$ and $U^+$, but the same results hold for $B^-$ and $U^-$, by symmetry.

If $g\in G$, we say that $g$ \textbf{stabilizes} (resp. \textbf{pointwise fixes}) $+\infty$ if $g.+\infty=+\infty$ (resp. if there exists $Q\in +\infty$ such that $g$ pointwise fixes $Q$). We denote by $\Stab_G(+\infty)$ the stabilizer of $+\infty$ in $G$. 

By \cite[3.4.1]{hebert2018study}, $\Stab_G(+\infty)=B:=TU^+$. We denote by $\Ch (\partial \I^+)$ the set of positive sector-germs at infinity of $\I$.  For $c\in \Ch (\partial \I^+)$ and $x\in \I$, there exists an apartment $A$ containing $x$ and $c$. We denote by \begin{equation}\label{e_def_x+c}
x+c
\end{equation} the convex hull of $x$ and $c$ in this apartment. This does not depend on the choice of $A$, by (MA II).  Fix $\lambda_0\in C^v_f$. For $r\in \R_+$, we set $\cC_{r}=\{c\in \Ch(\partial \I^+)\mid [0,r.\lambda_0]\subset 0+c\}$. This set is introduced in \cite[Definition 3.1]{ciobotaru2020cone} where it is denoted $U_{0,r,c}$ or $U_{r,c}$.

\begin{Proposition}\label{p_B_closed}
\begin{enumerate}
\item The subgroup $B$ is closed in $G$ for $\sT$ and $\sT_{\Fix}$.

\item The subgroup $U^+$ of $G$ is closed for $\sT$. It is closed for $\sT_{\Fix}$ if and only if $\sT=\sT_{\Fix}$. 

\end{enumerate}
\end{Proposition}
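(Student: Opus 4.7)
The proof rests on the identification $B = \Stab_G(+\infty)$ from \cite[3.4.1]{hebert2018study} and on analyzing the orbit map $g \mapsto g.(+\infty)$ via the sets $\cC_r$.

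\medskip

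For part~(1), let $g_0 \in G \setminus B$ and set $c = g_0^{-1}.(+\infty) \neq +\infty$. First I would show there exists $r > 0$ with $c \notin \cC_r$: if $c \in \cC_r$ for every $r$, the entire ray $\R_{\geq 0}\lambda_0$ would lie in $0 + c$, but $0 + c$ is a closed sector in some apartment $A'$ containing $0$ and $c$ (existing by (MA~III)), whose sector-germ is determined by any ray through $0$ inside it; this forces $c = +\infty$, a contradiction. Next, if $g \in G$ fixes $[0, r\lambda_0]$ pointwise, then since $g.0 = 0$ one has $0 + g.(+\infty) = g.\overline{C^v_f} \supset g.[0, r\lambda_0] = [0, r\lambda_0]$, i.e.\ $g.(+\infty) \in \cC_r$. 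For $n \geq r$, both $\cV_{n\lambda_0} \subset G_{[-n\lambda_0, n\lambda_0]}$ (by \eqref{e_cV_as_fixators}) and $G_{F_n} \subset G_{[-n\lambda_0, n\lambda_0]}$ (by \eqref{e_decomposition_GFn}) fix $[0, r\lambda_0]$ pointwise, so their orbits of $+\infty$ lie in $\cC_r$, which does not contain $c$. For $v$ in either subgroup, $v.(+\infty) \neq c = g_0^{-1}.(+\infty)$, whence $g_0 v \notin B$. Therefore $g_0 \cV_{n\lambda_0} \cap B = \emptyset$ and $g_0 G_{F_n} \cap B = \emptyset$, proving $B$ is closed for both $\sT$ and $\sT_\Fix$.

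\medskip

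For part~(2), $\sT$-closedness of $U^+$ follows from a continuous group homomorphism $\pi_T \colon B \to T$ with kernel $U^+$, defined by $\pi_T(tu) = t$ using the unique decomposition of $B = TU^+$ from Lemma~\ref{l_uniqueness_decomposition_UUT}. Continuity at $e$ is enough: any $g \in \cV_{m\lambda_0} \cap B$ has its $U^-$-component forced to be trivial by Lemma~\ref{l_uniqueness_decomposition_UUT}, so $B \cap \cV_{m\lambda_0} = U^+_{[-m\lambda_0, m\lambda_0]} T_{2mN(\lambda_0)}$, whose $\pi_T$-image lies in $T_n$ as soon as $2mN(\lambda_0) \geq n$. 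Since $T \simeq (\cK^\times)^\ell$ is Hausdorff, $U^+ = \pi_T^{-1}(\{e\})$ is closed in $B$; combined with part~(1), $U^+$ is $\sT$-closed in $G$.

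\medskip

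For the equivalence, the direction $\sT = \sT_\Fix \Rightarrow U^+$ is $\sT_\Fix$-closed is immediate. Conversely, assume $U^+$ is $\sT_\Fix$-closed; I would show $\cZ_\cO = \{e\}$, which yields $\sT = \sT_\Fix$ via Proposition~\ref{p_comparison_T_TFix}. For $z \in \cZ_\cO$, Lemma~\ref{l_fixator_I} gives $z \in G_F$ for every finite $F \subset \I$, so $z = e \cdot z \in U^+ G_F$, placing $z$ in every $\sT_\Fix$-neighborhood of $U^+$ and thus in its closure. Closedness then forces $z \in U^+ \cap T = \{e\}$, so $\cZ_\cO = \{e\}$. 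The main obstacle throughout is the geometric separation claim at the start of part~(1): verifying that $c \neq +\infty$ is detected by some $\cC_r$ requires a careful apartment-level argument about sectors based at $0$.
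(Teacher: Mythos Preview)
Your proof is correct and tracks the paper's approach closely for part~(1) and for the equivalence in part~(2), with one genuine variation in how you establish the $\sT$-closedness of $U^+$.

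For part~(1), the paper argues exactly as you do, but disposes of the separation step (your ``main obstacle'') by citing \cite[Lemma~7.6]{ciobotaru2020cone} for $\bigcap_{r\in\R_+}\cC_r=\{+\infty\}$ rather than sketching it. Your sketch is on the right track, but making it rigorous requires transporting the sector $0+c\subset A'$ back to $\A$ via an (MA~II) isomorphism fixing $A'\cap\A\supset\R_{\geq 0}\lambda_0$, and then using regularity of $\lambda_0$ in $\A$; the citation is doing real work here.

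For the $\sT$-closedness of $U^+$ in part~(2), the paper proceeds by direct element chasing: for $g=u_+t\in U^+T\setminus U^+$ it shows that $g\cV_\lambda\cap U^+\neq\emptyset$ forces $t\in T_{2N(\lambda)}$, which fails once $\lambda$ is sufficiently dominant. Your route through the quotient homomorphism $\pi_T\colon B\to B/U^+\simeq T$ and its continuity is different and arguably cleaner: it packages the same computation (namely $B\cap\cV_{m\lambda_0}=U^{pm+}_{[-m\lambda_0,m\lambda_0]}\,T_{2mN(\lambda_0)}$, via Lemma~\ref{l_uniqueness_decomposition_UUT}) into a single structural statement, and the Hausdorffness of $T$ then finishes at once.

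For the equivalence, you and the paper run the same argument in contrapositive order. The paper assumes $\sT\neq\sT_\Fix$, invokes Proposition~\ref{p_comparison_T_TFix} to obtain $\cZ_\cO\neq\{1\}$, picks $z\in\cZ_\cO\setminus\{1\}$, and observes that every $\sT_\Fix$-neighbourhood $zV$ contains $1\in U^+$, so $G\setminus U^+$ is not $\sT_\Fix$-open. You instead show directly that any $z\in\cZ_\cO$ lies in the $\sT_\Fix$-closure of $U^+$ (since $z\in G_F$ for all finite $F$, hence $z\in U^+G_F$), deduce $\cZ_\cO=\{1\}$ from closedness, and then invoke Proposition~\ref{p_comparison_T_TFix} for $\cZ_\cO=\{1\}\Rightarrow\sT=\sT_\Fix$. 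Be aware that this last implication in Proposition~\ref{p_comparison_T_TFix} is stated under the hypothesis that $\cK$ is Henselian; the paper's own argument uses it in the same direction.
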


\begin{proof}
1) Let $g\in G\setminus B$. Then  $g.(+\infty)\neq +\infty$.  By \cite[Lemma 7.6]{ciobotaru2020cone}, $\bigcap_{r\in \R_+} \cC_r=\{+\infty\}$. Thus there exists $n\in \N^*$ such that $g^{-1}.+\infty\notin \cC_n$.  Then $\cV_{n\lambda_0}$ fix $[0,n \lambda_0]$. Let $v\in \cV_{n\lambda_0}$. Then \[v.(0+\infty)=v.0+(v.+\infty)=0+(v.+\infty)\supset v.[0,n\lambda_0]=[0,n\lambda_0].\] Therefore $\cV_{n\lambda_0}.(+\infty)\subset \cC_{n}$. Consequently, $g^{-1}.(+\infty)\notin \cV_{n\lambda_0} .+\infty$ and thus $g\cV_{n\lambda_0}.+\infty \not\ni +\infty$. Thus $g.\cV_{n\lambda_0}\subset G\setminus B$, which proves that $G\setminus B$ is open for $\sT_{\Fix}$. 

2)  Let $g\in G\setminus U^+$. If $g\in G\setminus U^+T$, then by 1), there exists $V\in \cT$ such that $gV\subset G\setminus U^+ T\subset G\setminus U^+$. We now assume $g\in U^+T\setminus U$. Write $g= u_+t$, with $u_+\in U^+$ and $t\in T\setminus \{1\}$. Let $\lambda\in Y\cap C^v_f$ and assume that $g \cV_\lambda\cap U^+\neq \emptyset$. Then there exists $(u_+',u_-',t')\in U_{[-\lambda,\lambda]}^{pm+} \times U_{[-\lambda,\lambda]}^{nm-}\times T_{2N(\lambda)}$ such that $u_+t u'_+ u'_- t'=u''_+$, where $u_+''\in U^+$.  As $t$ normalizes $U^+$ and $U^-$, we can write $tu'_+ u'_-=u^{(3)}_+ u_-^{(3)} t$, for some $u_+^{(3)}\in U^+,u_-^{(3)}\in U^-$. Then we have \[u_+''^{-1} u_+ u^{(3)}_+ u^{(3)}_- tt'=1. \] By Lemma~\ref{l_uniqueness_decomposition_UUT} we deduce $tt'=1$. Therefore $t\in T_{2N(\lambda)}$. Thus if $\lambda$ is sufficiently dominant, $g \cV_\lambda\cap U^+=\emptyset$ and $g\cV_\lambda \subset G\setminus U^+$. We deduce that $G\setminus U^+$ is closed for $\cT$.

Suppose now $\sT\neq \sT_{\Fix}$. Then by Proposition~\ref{p_comparison_T_TFix},  $\cZ_\cO\neq \{1\}$. Then  every non empty open subset of $G$ for $\cT_{\Fix}$ contains $\cZ_\cO$.  Take $z\in \cZ_\cO\setminus \{1\}$. As $z\in \fT(\cO)$,  $z\in G\setminus  U^+$. Moreover, for any non empty open subset $V$ of $G$, $z V\ni 1\in U^+$. Therefore $G\setminus U^+$ is not open, which completes the proof of the proposition. 

\end{proof}

\begin{Proposition}\label{p_fixator_open}
\begin{enumerate}
\item Let $x\in \I$. Then the fixator $G_x$ of $x$ in $G$ is open (for $\cT_{\Fix}$ and $\cT$). In particular, $\fG^{\min}(\cO)$ is open in $G$. 

\item Let $E\subset \I$. Then the fixator and the stabilizer of $E$ in $G$ are closed for $\cT_{\Fix}$ and $\cT$.

\end{enumerate}
\end{Proposition}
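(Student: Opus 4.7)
My plan is as follows. For (1), I would unwind the definition of $\sT_\Fix$ directly: given $x\in \I$ and $g\in G_x$, the singleton $F=\{x\}$ is a finite subset of $\I$, $G_F=G_x$, and $G_x.g\subset G_x$ since $g\in G_x$. Hence $G_x$ is open for $\sT_\Fix$. Because $\sT$ is finer than $\sT_\Fix$ by Proposition~\ref{p_comparison_T_TFix}, $G_x$ is a fortiori open for $\sT$. The statement that $\fG^{\min}(\cO)$ is open then follows from the identification $G_0=\fG^{\min}(\cO)$ established in Proposition~\ref{p_fixator_zero}.

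For (2), I would invoke the general principle that an open subgroup of a topological group is automatically closed (its complement is a disjoint union of open cosets). Both $(G,\sT)$ and $(G,\sT_\Fix)$ are topological groups, by Theorem~\ref{t_conjugation_invariance} and Proposition~\ref{p_characterization_Tfix} respectively, and each $G_x$ is open by (1), hence clopen. The fixator $G_E=\bigcap_{x\in E}G_x$ is therefore closed as an intersection of closed subgroups.

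To handle the stabilizer $\Stab_G(E)$, I would show that its complement is open. Suppose $g\notin \Stab_G(E)$. Then either there exists $x\in E$ with $g.x\notin E$, or there exists $y\in E$ with $g^{-1}.y\notin E$. In the first case, $gG_x$ is an open neighborhood of $g$ (since $G_x$ is open and left-translation is a homeomorphism), and every $h\in gG_x$ satisfies $h.x=g.x\notin E$, so $h\notin \Stab_G(E)$. The second case reduces to the first by applying the first-case argument to $g^{-1}$ and then using continuity of inversion together with the identity $\Stab_G(E)=\Stab_G(E)^{-1}$.

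The argument is essentially elementary once (1) and the topological-group structure are in hand. There is no real obstacle; the main care is to remember that $g\in \Stab_G(E)$ requires both $g.E\subseteq E$ and $g.E\supseteq E$, which is why both cases appear in the stabilizer argument.
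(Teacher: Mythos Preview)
Your proof is correct. For (1) you appeal directly to the definition of $\sT_\Fix$ (the singleton $\{x\}$ is finite, so $G_x$ is open by definition), which is more elementary than the paper's route via Lemma~\ref{l_GFn_eventually_fix_I} (showing $G_{F_n}\subset G_x$ for $n\gg 0$). For (2) both you and the paper show the complements are open by producing, for each $g$ outside the subgroup, an open coset $g.G_x$ missing it; your fixator argument via ``open subgroups are closed'' and intersection is a clean repackaging of the same idea. In the stabilizer case the paper's argument tacitly assumes one can always find $x\in E$ with $g.x\notin E$, whereas you correctly split into two cases and cover the possibility $g.E\subsetneq E$ via inversion, so your treatment is actually more complete. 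One cosmetic point: the paper's definition of $\sT_\Fix$ is phrased with left cosets $v.G_F$, so you should write $g.G_x\subset G_x$ rather than $G_x.g\subset G_x$ (of course they coincide here since $g\in G_x$).
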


\begin{proof}
1) By Lemma~\ref{l_GFn_eventually_fix_I}, $G_{F_n}\subset G_x$ for $n\gg 0$, which proves 1). 

2) Let $g\in G\setminus G_E$. Let $x\in E$ be such that $g.x\neq x$. Then $g.G_x\subset G\setminus G_E$ and hence $G_E$ is open. Let $g\in G\setminus \Stab_G(E)$. Let $x\in E$ be such that $g.x\notin E$. Then $g.G_x\subset G\setminus \Stab_G(E)$ and hence $\Stab_G(E)$ is closed.

\end{proof}

\begin{Corollary}\label{c_N_T_open}
The subgroups $N$ and $T$ are closed in $G$ for $\cT_{\Fix}$ and $\cT$.
\end{Corollary}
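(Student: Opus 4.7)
The plan is to realise $N$ as a stabilizer of a subset of $\I$ and $T$ as the intersection of such a stabilizer with the Borel subgroup $B$, which are both already known to be closed.

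First I would handle $N$. By Section~\ref{n1.3.1}, $N$ is exactly the stabilizer of the standard apartment $\A$ in $G$. Since Proposition~\ref{p_fixator_open}(2) shows that the stabilizer in $G$ of any subset of $\I$ is closed for both $\sT_\Fix$ and $\sT$, applying it with $E=\A$ gives that $N$ is closed for both topologies.

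For $T$ I would prove the identity $T=N\cap B$. The inclusion $T\subset N\cap B$ is immediate since $T\subset N$ by construction and $T\subset B=TU^+$. Conversely, if $n\in N\cap B$, write $n=tu$ with $t\in T$ and $u\in U^+$; then $u=t^{-1}n\in N\cap U^+$, and this last intersection is trivial by the Tits-system axiom (RT3) (the same fact $U^+\cap N=\{1\}$ that was invoked in the proof of Lemma~\ref{l_uniqueness_decomposition_UUT}). Hence $n=t\in T$, proving $T=N\cap B$. Since $B$ is closed by Proposition~\ref{p_B_closed}(1) and $N$ is closed by the previous step, $T$ is a finite intersection of closed subgroups, hence closed for both $\sT_\Fix$ and $\sT$.

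There is no real obstacle: all the ingredients (stabilizers are closed, $B$ is closed, $U^+\cap N=\{1\}$) have already been established earlier in the paper, so the corollary follows in a few lines once one notices the two characterizations $N=\mathrm{Stab}_G(\A)$ and $T=N\cap B$.
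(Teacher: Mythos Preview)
Your proof is correct and follows essentially the same approach as the paper: $N=\Stab_G(\A)$ is closed by Proposition~\ref{p_fixator_open}(2), and $T=N\cap B$ is closed by Proposition~\ref{p_B_closed}(1). The only minor difference is that the paper verifies $T=N\cap\Stab_G(+\infty)$ via the action on $\A$ (an element of $N$ stabilizing $+\infty$ must have trivial Weyl part, hence lies in $T$), whereas you use the algebraic fact $U^+\cap N=\{1\}$; both arguments are equally short and valid.
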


\begin{proof}
By Proposition~\ref{p_fixator_open},  $N=\Stab_G(\A)$ is closed. We have $T=\Stab_G(+\infty)\cap N$. Indeed, it is clear that $T\subset \Stab_G(+\infty)\cap N$. Conversely, let $g\in \Stab_G(+\infty)\cap N$. Let $w\in W^v$ and $\lambda\in \Lambda$ be such that $g.x=\lambda+w.x$ for every $x\in  \A$. Then $w.C^v_f=C^v_f$ and thus $w=1$. Therefore $g$ acts by translation on $\A$ and hence $g\in T$. This proves that $T=\Stab_G(+\infty)\cap N$ and we conclude with Proposition~\ref{p_B_closed}. 
\end{proof}

\begin{Remark}
\begin{enumerate}
\item The fixator $K_I$ of $C_0^+$ is open for $\sT_\Fix$. Indeed, let $\lambda\in C^v_f\cap Y$. Then $G_{[0,\lambda]}=G_0\cap G_\lambda$ is open for $\sT_{\Fix}$ and $G_{[0,\lambda]}\subset K_I$.

\item For $x,y\in \I$, one writes $x\leq y$ if there exists $g\in G$ such that $g.x,g.y\in \A$ and $g.y-g.x\in \T=\bigcup_{w\in W^v} w.\overline{C^v_f}$. By $W^v$-invariance of $\T$, this does not depend on the choice of $g$ and by \cite[Théorème 5.9]{rousseau2011masures}, $\leq$ is a preorder on $\I$. One sets \[G^+=\{g\in G\mid g.0\geq 0\}.\] This is a subsemigroup of $G$  which is crucial for the definition of the Hecke algebras associated with $G$ (when $\cK$ is local), see \cite{braverman2011spherical}, \cite{braverman2016iwahori}, \cite{gaussent2014spherical} or  \cite{bardy2016iwahori}. Then $G^+\supset G_0=\fG^{\min}(\cO)$ and thus $G^+$ is open in $G$. 

\end{enumerate}
\end{Remark}

\begin{Lemma}\label{l_action_elements_staying_A}
Let $g\in G$. Then there exists $n\in N$ such that $g.a=n.a$ for every $a\in \A\cap g^{-1}.\A$. 
\end{Lemma}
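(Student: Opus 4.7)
The plan is to apply axiom (MA II) directly to the pair of apartments $\A$ and $g^{-1}.\A$, and then compose with $g$.

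First I would observe that $g^{-1}.\A$ is an apartment of $\I$ (since the $G$-action on $\I$ permutes apartments). Applying (MA II) to $A := \A$ and $A' := g^{-1}.\A$ yields an element $h \in G$ such that $h.\A = g^{-1}.\A$ and $h$ pointwise fixes the intersection $\A \cap g^{-1}.\A$.

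Next, set $n := gh$. Then
\[
n.\A \;=\; gh.\A \;=\; g.(g^{-1}.\A) \;=\; \A,
\]
so $n$ stabilizes $\A$. Since the stabilizer of $\A$ in $G$ is $N$ (recalled in \ref{n1.3.1}), we conclude that $n \in N$. For any $a \in \A \cap g^{-1}.\A$, the fact that $h$ fixes $a$ gives
\[
n.a \;=\; g.(h.a) \;=\; g.a,
\]
which is the desired equality.

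There is essentially no main obstacle here: the argument is a direct application of (MA II), and the only point to be careful about is the direction in which (MA II) is applied, namely choosing $h$ so that $h$ sends $\A$ \emph{to} $g^{-1}.\A$ (rather than the other way around) while fixing their intersection, so that left-multiplication by $g$ lands us back in the stabilizer of $\A$.
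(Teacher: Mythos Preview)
Your proof is correct and follows essentially the same approach as the paper. The only cosmetic difference is that the paper applies (MA II) to the pair $\A$ and $g.\A$, obtaining $h$ with $h.(g.\A)=\A$ fixing $\A\cap g.\A$, and then sets $n=hg$; you instead apply (MA II) to $\A$ and $g^{-1}.\A$ and set $n=gh$, which is an equivalent reformulation.
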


\begin{proof}
Let $h\in G$ be such that $hg.\A=\A$ and $h$ fixes $\A\cap g.\A$, which exists by (MA II). Then $n:=hg$ stabilizes $\A$ and thus it belongs to $N$. Moreover, $hg.a=n.a=g.a$ for every $a\in \A\cap g^{-1}.\A$, which proves the lemma. 
\end{proof}

\begin{Lemma}\label{l_description_UalphaT_stabilizer}
Let $\alpha\in \Phi$. Write $\alpha=\epsilon w.\alpha_i$, for $w\in W^v$, $\epsilon\in \{-,+\}$  and $i\in I$. Let $\fQ$ be the sector-germ at infinity of   $-\epsilon wr_i(C^v_f)$. Then $U_{\alpha} T=\Stab_{G}(w.\epsilon\infty)\cap \Stab_G(\fQ)$. 

\end{Lemma}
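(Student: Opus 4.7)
The plan is to prove each inclusion separately. The easy inclusion $U_\alpha T \subseteq \Stab_G(w.\epsilon\infty) \cap \Stab_G(\fQ)$ follows from direct geometric verification: $T$ acts on $\A$ by translations, so $T$ stabilizes every sector-germ at infinity; and since $U_\alpha = \tilde w U_{\epsilon \alpha_i} \tilde w^{-1}$ pointwise fixes half-apartments of the form $D(\alpha,k)$, it suffices to observe that $\alpha$ is strictly positive on both chambers $\epsilon w.C^v_f$ and $-\epsilon w r_i.C^v_f$. Using $\alpha_i(r_i.y) = -\alpha_i(y)$ for $y\in C^v_f$, one checks $\alpha(\epsilon w.y) = \alpha_i(y) > 0$ and $\alpha(-\epsilon w r_i.y) = \alpha_i(y) > 0$, so every cofinal sub-sector of the two germs is pointwise fixed by a suitable subgroup of $U_\alpha$.

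For the reverse inclusion, I would conjugate by $\tilde w$ to reduce to $w = 1$, and exploit the positive/negative symmetry of $G$ to reduce to $\epsilon = +1$. It then suffices to show $TU^+ \cap T\tilde r_i U^- \tilde r_i^{-1} \subseteq T U_{\alpha_i}$, where $\alpha = \alpha_i$ and $\fQ$ is the germ of $-r_i.C^v_f = \tilde r_i.(-C^v_f)$. Here I would use $\Stab_G(+\infty) = TU^+$ (from \cite[3.4.1]{hebert2018study}) together with the immediate consequence $\Stab_G(\fQ) = \tilde r_i \,\Stab_G(-\infty)\,\tilde r_i^{-1} = T(\tilde r_i U^- \tilde r_i^{-1})$.

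The key technical input is the RGD product decomposition $U^- = U_{-\alpha_i}\cdot V$ with unique factorization, where $V := U^- \cap \tilde r_i U^- \tilde r_i^{-1} = \langle U_\beta \mid \beta \in \Phi_-\setminus\{-\alpha_i\}\rangle$; this is available in \cite{remy2002groupes} and relies on the identity $r_i.(\Phi_- \setminus\{-\alpha_i\}) = \Phi_-\setminus\{-\alpha_i\}$, which also yields $\tilde r_i V \tilde r_i^{-1} = V$. Conjugating by $\tilde r_i$ then gives $\tilde r_i U^- \tilde r_i^{-1} = U_{\alpha_i}\cdot V$. Now given $g \in TU^+ \cap T\tilde r_i U^- \tilde r_i^{-1}$, write $g = tu^+ = t'\cdot x_{\alpha_i}(b)\cdot v''$ with $u^+\in U^+$, $t,t'\in T$, $b\in\cK$ and $v''\in V\subset U^-$. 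Pushing $t^{-1}t'$ past $x_{\alpha_i}(b)$ via Lemma~\ref{l_action_T_U} yields a decomposition $u^+ = x_{\alpha_i}(c)\cdot s\cdot v''$ in $U^+\cdot T\cdot U^-$. Comparing with the trivial $U^+TU^-$-decomposition $u^+ = u^+\cdot 1\cdot 1$ and invoking Lemma~\ref{l_uniqueness_decomposition_UUT} forces $s = 1$ and $v'' = 1$, so $u^+ = x_{\alpha_i}(c)\in U_{\alpha_i}$ and $g \in TU_{\alpha_i}$.

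The main obstacle is to correctly justify, inside the minimal Kac-Moody group $\fG^{\min}(\cK) = \fG(\cK)$, the RGD product decomposition $U^- = U_{-\alpha_i}\cdot V$ together with its uniqueness and the $\tilde r_i$-invariance of $V$; once this is granted, the rest is formal manipulation within the $U^+TU^-$-uniqueness of Lemma~\ref{l_uniqueness_decomposition_UUT} and the $T$-normalization of root subgroups from Lemma~\ref{l_action_T_U}.
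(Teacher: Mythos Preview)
Your proof is correct but follows a genuinely different strategy from the paper's.

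The paper argues geometrically through the masure. After the same reduction to $w=1$, $\epsilon=+$, it takes $g\in\Stab_G(+\infty)\cap\Stab_G(\fQ)$, uses Lemma~\ref{l_action_elements_staying_A} to find $n\in N$ agreeing with $g$ on $\A\cap g^{-1}.\A$, observes that $n$ must lie in $T$ (since it fixes $+\infty$), and so reduces to the case where $g$ pointwise fixes both germs. Then $g.\A\cap\A$ is a finite intersection of half-apartments (axiom (MA~II)) containing subsectors of $C^v_f$ and of $-r_i.C^v_f$, hence is either all of $\A$ or a translate of $\alpha_i^{-1}(\R_+)$; the paper then invokes \cite[5.7 3)]{rousseau2016groupes} to conclude $g\in U_{\alpha_i}\fT(\cO)$.

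Your approach is purely algebraic, staying inside the RGD system: you identify $\Stab_G(\fQ)=T(\tilde r_i U^-\tilde r_i^{-1})$, factor $\tilde r_i U^-\tilde r_i^{-1}=U_{\alpha_i}\cdot V$ with $V=\langle U_\beta\mid\beta\in\Phi_-\setminus\{-\alpha_i\}\rangle\subset U^-$, and then reduce to a comparison of two $U^+TU^-$-decompositions of the same element, concluding via Lemma~\ref{l_uniqueness_decomposition_UUT}. (A small note: the commutation you need is really \eqref{e_KMT4} rather than Lemma~\ref{l_action_T_U}.) This avoids the masure axioms and the external input from \cite{rousseau2016groupes}, trading them for the standard RGD factorization $U^-=U_{-\alpha_i}\cdot V$ and the invariance $\tilde r_i V\tilde r_i^{-1}=V$, both of which are indeed available in \cite{remy2002groupes}. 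The paper's route is more in keeping with the building-theoretic spirit of the surrounding section; yours is shorter and more self-contained once the RGD input is granted.
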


\begin{proof}
There is no loss of generality in assuming that  $w=1$ and $\epsilon=+$.  Let $u\in U_{\alpha_i}$. Then $u$ fixes a translate of $\alpha_i^{-1}(\R_+)$. Therefore $TU_{\alpha_i}$ stabilizes $\fQ$ and $+\infty$. Conversely, let $g\in\Stab_{G}(+\infty)\cap \Stab_G(\fQ)$. Then   there exist $x,x'\in \A$ such that $g.(x+C^v_f)= x'+C^v_f$. Then by Lemma~\ref{l_action_elements_staying_A}, there exists $n\in N$ such that $g.x''=n.x''$ for every $x''\in \A\cap g^{-1}.\A$. Then $n$ fixes $+\infty$ and thus $n\in T$ (by the proof of Corollary~\ref{c_N_T_open}). Then $n^{-1}g.x=x$. Considering $n^{-1}g$ instead of $g$,  we may assume that $g$ pointwise fixes $+\infty$. Therefore $g$ pointwise fixes $\fQ$. There exists $a,a'\in \A$ such that $g$ fixes $a+C^v_f$ and $g$ fixes $a'-r_i(C^v_f)$. Let $A=g.\A$. Then $A\cap \A$ is a finite intersection of half-apartments by (MA II) and thus either $A=\A$ or $A\cap \A$ is a translate of $\alpha_i^{-1}(\R_+)$. Moreover, $g$ fixes $A\cap \A$ since it fixes an open subset of $A\cap \A$. By \cite[5.7 3)]{rousseau2016groupes}, $g\in U_{\alpha_i} \fT(\cO)$. Consequently $\Stab_{G}(+\infty)\cap \Stab_G(\fQ)\subset U_{\alpha_i}T$, and the lemma follows.
\end{proof}

\begin{Proposition}
Let $\alpha\in \Phi$. 

\begin{enumerate}
\item The group $U_\alpha T$ is closed for $\cT$ and $\cT_{\Fix}$. 

\item The group $U_{\alpha}$ is closed for $\cT$. 

\item If $\sT\neq \sT_\Fix$, then $U_\alpha$ is not closed for $\sT_\Fix$. 
\end{enumerate}
\end{Proposition}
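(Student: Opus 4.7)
The plan is to establish the three statements in succession, each relying on previous results collected above.

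For part (1), I invoke Lemma~\ref{l_description_UalphaT_stabilizer} to write
\[
U_\alpha T = \Stab_G(w\epsilon\infty) \cap \Stab_G(\fQ).
\]
Each factor is the stabilizer in $G$ of a sector germ at infinity. Every positive (resp.\ negative) sector germ of $\I$ is of the form $g.(+\infty)$ (resp.\ $g.(-\infty)$) for some $g \in G$, so each such stabilizer is a conjugate of $B = \Stab_G(+\infty)$ or of $B^- = \Stab_G(-\infty)$. By Proposition~\ref{p_B_closed}(1) (and its analogue for $B^-$, obtained by symmetry of the roles of positive and negative roots), both $B$ and $B^-$ are closed in $G$ for $\sT$ and for $\sT_\Fix$. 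Since both topologies make $G$ a topological group, conjugation by a fixed element is a homeomorphism, and hence $U_\alpha T$ is closed as an intersection of two closed sets.

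For part (2), let $\epsilon \in \{-,+\}$ be the sign of $\alpha$, so that $U_\alpha \subset U^\epsilon$. I first verify the identity
\[
U_\alpha T \cap U^\epsilon = U_\alpha.
\]
The inclusion $\supset$ is immediate. For $\subset$, if $g = u_\alpha t \in U^\epsilon$ with $u_\alpha \in U_\alpha$ and $t \in T$, then $t = u_\alpha^{-1}g \in T \cap U^\epsilon$; since $T \subset N$ and $U^\epsilon \cap N = \{1\}$ (by \cite[Remarque 3.17]{rousseau2016groupes} and \cite[1.2.1 (RT3)]{remy2002groupes}), we obtain $t = 1$, whence $g \in U_\alpha$. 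By part (1), $U_\alpha T$ is $\sT$-closed, and by Proposition~\ref{p_B_closed}(2), so is $U^\epsilon$; consequently $U_\alpha$ is $\sT$-closed.

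For part (3), I mimic the argument given at the end of the proof of Proposition~\ref{p_B_closed}(2). Assume $\sT \neq \sT_\Fix$; by Proposition~\ref{p_comparison_T_TFix} this forces $\cZ_\cO \neq \{1\}$, and I fix $z \in \cZ_\cO \setminus \{1\}$. Since $z \in T$ and $T \cap U_\alpha \subset T \cap U^\epsilon = \{1\}$, we have $z \notin U_\alpha$. Suppose for contradiction that $U_\alpha$ is $\sT_\Fix$-closed, so that $G \setminus U_\alpha$ is a $\sT_\Fix$-open neighbourhood of $z$. By the description of $\sT_\Fix$ via the filtration $(G_{F_n})_{n\in\N^*}$ (Lemma~\ref{l_GFn_eventually_fix_I}(3)), there exists $n \in \N^*$ with $zG_{F_n} \subset G \setminus U_\alpha$. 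But by Lemma~\ref{l_fixator_I}, $\cZ_\cO$ fixes $\I$ pointwise, so $z^{-1} \in \cZ_\cO \subset G_{F_n}$, and therefore $1 = z \cdot z^{-1} \in zG_{F_n}$. Since $1 \in U_\alpha$, this is the desired contradiction.

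The only mildly delicate point is the identification, in part (1), of each sector-germ stabilizer with a conjugate of $B^\pm$; this uses the transitivity of $G$ on sector germs of each sign together with the equality $\Stab_G(\pm\infty) = B^\pm$ recalled before Proposition~\ref{p_B_closed}. Parts (2) and (3) then reduce to routine manipulations with intersections and with $\sT_\Fix$-neighbourhoods of $1$.
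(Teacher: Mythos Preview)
Your proof is correct and follows essentially the same strategy as the paper: part~(1) via Lemma~\ref{l_description_UalphaT_stabilizer} and the closedness of conjugates of $B^\pm$, part~(2) via the identity $U_\alpha = U_\alpha T \cap U^\epsilon$ together with Proposition~\ref{p_B_closed}(2), and part~(3) by the same $\cZ_\cO$-argument used at the end of Proposition~\ref{p_B_closed}(2). Your write-up is in fact slightly more careful than the paper's, in that you keep track of the sign $\epsilon$ of $\alpha$ (the paper writes $U_\alpha = U_\alpha T \cap U^+$, tacitly assuming $\alpha\in\Phi_+$) and you spell out the verification of $U_\alpha T \cap U^\epsilon = U_\alpha$ and the full contradiction argument in part~(3).
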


\begin{proof}
1)  Let $\fQ$ be a sector-germ of $\I$ (positive or negative). Then by Proposition~\ref{p_B_closed} (or the similar proposition for $B^-$ if $\fQ$ is negative), $\mathrm{Stab}_G(\fQ)$ is closed in $G$ for $\sT_{\Fix}$.  By  Lemma~\ref{l_description_UalphaT_stabilizer} and Proposition~\ref{p_fixator_open}, we have 1). 

2) We have $U_\alpha=U_\alpha T\cap U^+$, by Lemma~\ref{l_uniqueness_decomposition_UUT} and thus 2) follows from 1) and Proposition~\ref{p_B_closed}. The proof of 3) is similar to the proof of the corresponding result of Proposition~\ref{p_B_closed}. 
\end{proof}

\subsection{Compact subsets have empty interior}\label{ss_non_compactness}

By  \cite[Theorem 3.1]{abdellatif2019completed}, for any topology of topological group on $G$, $G_{C_0^+}$ or $G_0$ are not compact and open. In particular, $G_0$ and $G_{C_0^+}$ are not compact for $\sT$.  With a similar reasoning, we have the following.

\begin{Proposition}\label{p_nonCompactness}
Assume that $W^v$ is infinite. \begin{enumerate}
\item Let $n\in \N^*$ and $\lambda\in Y^+$ regular. Then $\cV_{n\lambda}/\cV_{(n+1)\lambda}$ is infinite.

\item Every compact subset of $(G,\sT)$  has empty interior. 
\end{enumerate}  
\end{Proposition}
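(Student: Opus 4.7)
The plan is to prove (1) first by exhibiting infinitely many elements of $\cV_{n\lambda}$ lying in pairwise distinct left cosets of $\cV_{(n+1)\lambda}$, and then deduce (2) by a standard open-cover argument.

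For (1), fix $\lambda\in Y^+$ regular and $n\in\N^*$. Since $W^v$ is infinite and acts on $\Phi$ with no finite $W^v$-orbit containing a regular element, $\Phi_+$ is infinite. Choose an infinite family $(\beta_k)_{k\in\N}$ of pairwise distinct positive real roots and set $u_k:=x_{\beta_k}(\qp^{n|\beta_k(\lambda)|})$. The enclosure computation $f_{[-n\lambda,n\lambda]}(\beta_k)=|\beta_k(n\lambda)|=n|\beta_k(\lambda)|$ gives $u_k\in U_{\beta_k,[-n\lambda,n\lambda]}\subset U_{[-n\lambda,n\lambda]}^{pm+}\subset \cV_{n\lambda}$.

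To show the cosets $(u_k\cV_{(n+1)\lambda})_{k\in \N}$ are pairwise distinct, I would assume $u_k^{-1}u_\ell\in\cV_{(n+1)\lambda}$ for some $k\neq\ell$. Since $u_k^{-1}u_\ell\in U^+$, Lemma~\ref{l_uniqueness_decomposition_UUT} applied to the $U^+\cdot U^-\cdot T$ decomposition of $\cV_{(n+1)\lambda}$ forces $u_k^{-1}u_\ell\in U_{[-(n+1)\lambda,(n+1)\lambda]}^{pm+}$. The plan is now to compare the degree-$\beta_k$ components of both sides of
\[x_{\beta_k}(-\qp^{n|\beta_k(\lambda)|})\,x_{\beta_\ell}(\qp^{n|\beta_\ell(\lambda)|})=\prod_{\gamma\in\Delta_+}X_\gamma(c_\gamma)\]
inside the $Q_+$-graded completion $\widehat{\cU}^+_\cK$. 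On the left, since $\R\beta\cap\Phi=\{\pm\beta\}$ for real $\beta$, the equation $m\beta_k+p\beta_\ell=\beta_k$ with $m,p\in\N$ has only the solution $(m,p)=(1,0)$ (any $p\geq 1$ would force $\beta_k\in\R\beta_\ell\cap\Phi=\{\pm\beta_\ell\}$, contradicting $\beta_k\neq\beta_\ell$ in $\Phi_+$), so the degree-$\beta_k$ component is exactly $-\qp^{n|\beta_k(\lambda)|}e_{\beta_k}$, of valuation $n|\beta_k(\lambda)|$. On the right, each contribution to the degree-$\beta_k$ component is a product $\prod_\gamma c_\gamma^{[m_\gamma]}$ with $\sum_\gamma m_\gamma\gamma=\beta_k$; integrality of the twisted exponentials \cite[Definition 8.45]{marquis2018introduction} yields $\omega(c_\gamma^{[m_\gamma]})\geq m_\gamma f_\Omega(\gamma)=m_\gamma(n+1)|\gamma(\lambda)|$ for real $\gamma$, and a nonnegative contribution for imaginary $\gamma$. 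The triangle inequality then gives $\sum_\gamma m_\gamma|\gamma(\lambda)|\geq|\sum_\gamma m_\gamma\gamma(\lambda)|=|\beta_k(\lambda)|$, so every such product has valuation at least $(n+1)|\beta_k(\lambda)|$. By the ultrametric property and the uniqueness of the normal form \eqref{e_normal_form_Upma}, equating both sides forces $n|\beta_k(\lambda)|\geq(n+1)|\beta_k(\lambda)|$, contradicting $|\beta_k(\lambda)|>0$.

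For (2), suppose a compact $K\subset G$ has non-empty interior. Then there exist $g\in G$ and $m\in\N^*$ such that $g\cV_{m\lambda}\subset K$; after left translation we may assume $\cV_{m\lambda}\subset K$. Being an open subgroup of the topological group $(G,\sT)$, $\cV_{m\lambda}$ is also closed, hence as a closed subset of the compact $K$ it is itself compact. However $\cV_{m\lambda}$ is the disjoint union of its (open) left cosets modulo $\cV_{(m+1)\lambda}$, and by (1) there are infinitely many such cosets, contradicting compactness. The main obstacle is the valuation computation at the heart of (1): one must carefully track the contributions of each $c_\gamma^{[m_\gamma]}$ to $\cU_{\beta_k,\cK}$ and invoke the triangle inequality $\sum_\gamma m_\gamma|\gamma(\lambda)|\geq|\beta_k(\lambda)|$ jointly with the uniqueness in \eqref{e_normal_form_Upma} to produce the desired valuation contradiction.
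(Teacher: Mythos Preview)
Your proof of (2) is correct and essentially identical to the paper's: reduce to $\cV_{m\lambda}$ being compact, then use the infinite open partition into $\cV_{(m+1)\lambda}$-cosets.

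For (1), your approach is genuinely different from the paper's and worth recording. The paper argues geometrically via the masure: it shows that the orbit $\cV_{n\lambda}\cdot((n+1)\lambda)$ is infinite by producing, for each positive real root $\beta_\ell$, the $|\beta_\ell(\lambda)|$ distinct points $x_{-\beta_\ell}(\qp^k)\cdot((n+1)\lambda)$ with $k\in\llbracket n|\beta_\ell(\lambda)|,\,(n+1)|\beta_\ell(\lambda)|-1\rrbracket$, and lets $\htt(\beta_\ell)\to\infty$. You instead work algebraically inside $\widehat{\cU}^+_\cK$, comparing the valuation of the degree-$\beta_k$ component of $u_k^{-1}u_\ell$ in two ways. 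The paper's route is shorter and avoids any bookkeeping in $\widehat{\cU}^+$; your route is more self-contained (no masure geometry beyond the decomposition of $\cV_{(n+1)\lambda}$) and makes the obstruction visible as a pure valuation inequality.

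Two points in your argument need tightening. First, the phrase ``a nonnegative contribution for imaginary $\gamma$'' is too weak and, as written, does not support the final inequality: you need, and you actually have, the \emph{same} bound $\omega\geq m_\gamma(n+1)|\gamma(\lambda)|$ for every $\gamma\in\Delta_+$, real or imaginary, because membership in $U^{pma}_{[-(n+1)\lambda,(n+1)\lambda]}$ forces $c_\gamma\in\g_{\gamma,\Z}\otimes\cK_{\omega\geq(n+1)|\gamma(\lambda)|}$ for all $\gamma$, and the twisted exponentials $x^{[m]}$ lie in $\cU$ by definition. Second, the notation $c_\gamma^{[m_\gamma]}$ is ambiguous when $\dim\g_\gamma>1$: you should expand $X_\gamma(c_\gamma)=\prod_{x\in\cB_\gamma}[\exp](\lambda_{\gamma,x}x)$ and track the multiplicities $m_{\gamma,x}$ separately, which still yields $\sum_{\gamma,x}m_{\gamma,x}\gamma=\beta_k$ and valuation $\geq(n+1)\sum_{\gamma,x}m_{\gamma,x}|\gamma(\lambda)|\geq(n+1)|\beta_k(\lambda)|$ by the triangle inequality. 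With these fixes your argument goes through.
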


\begin{proof}
1) Set $H=G_{[-n\lambda,(n+1)\lambda]}\subset G$. Then $H\supset \cV_{(n+1)\lambda}$, by \eqref{e_cV_as_fixators}. Thus $|\cV_{n\lambda}/\cV_{(n+1)\lambda}|\geq |\cV_{n\lambda}/(H\cap \cV_{n\lambda})|$ and  it suffices to prove that $\cV_{n\lambda}/ (H\cap \cV_{n\lambda})$ is infinite. We have $\cV_{n\lambda}=\bigsqcup_{v\in \cV_{n\lambda}/(H\cap \cV_{n\lambda})} v.(H\cap \cV_{n\lambda})$. Moreover if $v,v'\in \cV_{n\lambda}$, then $v.((n+1)\lambda)=v'.((n+1)\lambda)$ if and only if $v'. (G_{(n+1)\lambda}\cap \cV_{n \lambda})=v.(G_{(n+1)\lambda}\cap \cV_{n\lambda})$. 

Let us prove that $G_{(n+1)\lambda}\cap \cV_{n\lambda}=H\cap \cV_{n\lambda}$.  Let $g\in G_{(n+1)\lambda}\cap \cV_{n\lambda}$. Then by \eqref{e_cV_as_fixators}, $g$ fixes $[-n\lambda,n\lambda]$ and $(n+1)\lambda$. Then $g.\A$ is an apartment containing $[-n\lambda,n\lambda]\cup \{(n+1)\lambda\}$. As $g.\A\cap \A$ is convex, $g.\A$ contains $[-n\lambda,(n+1)\lambda]$. By (MA II), there exists $h\in G$ such that $g.\A=h.\A$ and $h$ fixes $\A\cap g.\A$. Then $h^{-1}g.\A=\A$ and $h^{-1}g$ acts on $\A$ by an affine map. As $h^{-1}g$ fixes $[-n\lambda,n\lambda]$, it fixes $[-n\lambda,(n+1)\lambda]$. Therefore $g$ fixes $[-n\lambda,(n+1)\lambda]$ and thus $g\in H$. Therefore $G_{(n+1)\lambda}\cap \cV_{n\lambda}=H\cap \cV_{n\lambda}$. Consequently, \[\cV_{n\lambda}.((n+1)\lambda)=\bigsqcup_{v\in \cV_{n\lambda}/(H\cap \cV_{n\lambda})} \{v.(n+1)\lambda\}\text{ and }|\cV_{n\lambda}/(H\cap \cV_{n\lambda})|=|\cV_{n\lambda}.\left((n+1)\lambda\right)|.\]

Let $(\beta_\ell)\in (\Phi_+)^\N$ be an injective sequence. Write $\beta_\ell=\sum_{i\in I} m_i^{(\ell)} \alpha_i$, with $m_i^{(\ell)}\in \N$ for $\ell\in \N$. Then $\beta_\ell(\lambda)\geq (\sum_{i\in I} m_i^{(\ell)} )(\min_{i\in I} \alpha_i(\lambda))\underset{\ell\to\infty}{\rightarrow} +\infty$. Let $\ell\in \N$. For $k\in \llbracket \beta_\ell(n\lambda),\beta_\ell((n+1)\lambda)-1\rrbracket$, $x_{-\beta_\ell}(\qp^k)\in U_{[-n\lambda,n\lambda]}^{nm-}\subset \cV_{n\lambda}$.  Set \[x_k=x_{-\beta_\ell}(\qp^k).((n+1)\lambda)\in \cV_{n\lambda}.((n+1)\lambda).\] Let $k'\in \llbracket \beta_\ell(n\lambda),\beta_\ell((n+1)\lambda)-1\rrbracket$. Then $x_k=x_{k'}$ if and only if  $x_{-\beta_\ell}(\qp^k).((n+1)\lambda)=x_{-\beta_\ell}(\qp^{k'}).((n+1)\lambda)$ if and only if $x_{-\beta_\ell}(\qp^k-\qp^{k'}).((n+1)\lambda)=(n+1)\lambda$ if and only if $\omega(\qp^k-\qp^{k'})\geq (n+1)\beta_\ell(\lambda)$ if and only $k=k'$. Therefore $|\cV_{n\lambda}.((n+1)\lambda)|\geq \beta_\ell(\lambda)$. As this is true for every $\ell\in \N$, $|\cV_{n\lambda}.((n+1)\lambda)|$ if infinite, which proves 1).

2) Let $V$ be a compact subset of $G$ and assume that $V$ has non empty interior. Considering $v^{-1}.V$ instead of $V$, we may assume $1\in V$. Then there exists $\lambda\in Y^+\cap C^v_f$ such that $\cV_\lambda\subset V$, and we have $\cV_{2\lambda}\subset \cV_\lambda$. As $\cV_\lambda$ is closed, it is compact. By 1), $\cV_\lambda/\cV_{2\lambda}$  is infinite.  Therefore $\cV_\lambda=\bigsqcup_{v\in \cV_{\lambda}/\cV_{2\lambda}}v.\cV_{2\lambda}$ is a cover of $\cV_\lambda$ by open subsets from which we can not extract a finite subcover: we reach a contradiction. Thus every compact subset of $G$ has empty interior. 

\end{proof}

\subsection{Example of affine $\mathrm{SL}_2$}\label{ss_example_affine_SL2}

In this subsection, we determine an explicit filtration equivalent to $(\cV_{n\lambda})$ in the case of affine $\mathrm{SL}_2$ (quotiented by the central extension).

  Let $Y=\Z{\mathring{\alpha}}^\vee\oplus  \Z d$, where ${\mathring{\alpha}}^\vee,d$ are some symbols, corresponding to the positive root of $\mathrm{SL}_2(\cK)$   and to the semi-direct extension by $\cK^*$ respectively. Let $X=\Z {\mathring{\alpha}}\oplus \Z \delta$, where ${\mathring{\alpha}}$\index{a@${\mathring{\alpha}}$}, $\delta$\index{d@$\delta$} $:Y\rightarrow \Z$ are the $\Z$-module morphisms defined by ${\mathring{\alpha}}({\mathring{\alpha}}^\vee)=2$, $ {\mathring{\alpha}}(d)=0$,  $\delta({\mathring{\alpha}}^\vee)=0$ and  $\delta(d)=1$.  Let $\alpha_0=\delta-{\mathring{\alpha}}$, $\alpha_1={\mathring{\alpha}}$, $\alpha_0^\vee=-{\mathring{\alpha}}^\vee$ and $\alpha_1^\vee={\mathring{\alpha}}^\vee$. Then $\mathcal{S}=(\left(\begin{smallmatrix} 2 & -2\\ -2 & 2\end{smallmatrix}\right),X,Y,\{\alpha_0, \alpha_1\},\{\alpha_0^\vee,\alpha_1^\vee\})$ is a root generating system. Let $\fG$ be the  Kac-Moody group associated with $\mathcal{S}$ and $G=\fG(\cK)$. Then by \cite[13]{kumar2002kac} and \cite[7.6]{marquis2018introduction}, $G=\mathrm{SL}_2\left(\cK[u,u^{-1}]\right)\rtimes \cK^*$, where $u$ is an indeterminate and if $(M,z),(M_1,z_1)\in G$, with $M=\begin{psmallmatrix}
a(\qp,u) & b(\qp,u)\\
c(\qp,u) & d(\qp,u)
\end{psmallmatrix}$, $M_1=\begin{psmallmatrix}
a_1(\qp,u) & b_1(\qp,u)\\ c_1(\qp,u) & d_1(\qp,u)
\end{psmallmatrix}$,  we have \begin{equation}\label{eqDef_semi_direct_product}
(M,z).(M_1,z_1)=(M\begin{psmallmatrix}
a_1(\qp,z u) & b_1(\qp,zu)\\ c_1(\qp,z u) & d_1(\qp,z u)
\end{psmallmatrix},zz_1) .
\end{equation}

Note that the family $(\alpha_0^\vee,\alpha_1^\vee)$ is not free. We have $\Phi=\{\alpha+k\delta\mid\alpha\in \{\pm {\mathring{\alpha}}\}, k\in \Z\}$ and $(\alpha_0,\alpha_1)$ is a basis of this root system. We denote by $\Phi^+$ (resp. $\Phi^-$) the set $\Phi\cap (\N\alpha_0+\N\alpha_1)$ (resp $-\Phi_+$). For $k\in \Z$ and $y\in \cK$, we set $x_{{\mathring{\alpha}}+k\delta}(y)=(\begin{psmallmatrix}
1 & u^k y\\ 0 & 1
\end{psmallmatrix},1)\in G$ and $x_{-{\mathring{\alpha}}+k\delta}(y)=(\begin{psmallmatrix}
1 & 0 \\ u^k y& 1 
\end{psmallmatrix},1)\in G$.

Let  $f,g\in \cK$ be  such that $\omega(f)=\omega(g)=0$. Let $\ell,n\in \Z$.  Then $\left(\begin{psmallmatrix} f \qp^\ell  & 0 \\ 0 & f^{-1} \qp^{-\ell} \end{psmallmatrix},g \qp^n\right)$ acts on $\A$ by the translation of vector $-\ell {\mathring{\alpha}}^\vee-nd$. For $\mu =\ell {\mathring{\alpha}}^\vee+nd\in Y$,  we set $t_\mu=\left(\begin{psmallmatrix} \qp^{-\ell}  & 0 \\ 0 &  \qp^{\ell} \end{psmallmatrix}, \qp^{-n}\right)$, which acts by the translation of vector $\mu$ on $\A$.  We set $\lambda={\mathring{\alpha}}^\vee+3d$. We have  $\alpha_0(\lambda)=1$, $\alpha_1(\lambda)=2$ and thus $\lambda\in C^v_f$. 

By \cite[4.12 3 b]{rousseau2016groupes}, $U_{0}^{pm+}=\left(\begin{psmallmatrix} 1+u\cO[u] & \cO [u]\\
u\cO[u] & 1+u\cO[u]\end{psmallmatrix},1\right)\cap G$ and similarly \[U_0^{nm-}=\left(\begin{psmallmatrix} 1+u^{-1}\cO[u^{-1}] & u^{-1}\cO [u^{-1}]\\
\cO[u^{-1}] & 1+u^{-1}\cO[u^{-1}]\end{psmallmatrix},1\right)\cap G.\]

 We make the following assumption: \begin{equation}\label{e_assumption}
 \forall n\in \N^*, \ker \pi_n\subset \begin{psmallmatrix} 1+\qp^n \cO[u,u^{-1}] & 1+\qp^n \cO[u,u^{-1}]\\
 1+\qp^n \cO[u,u^{-1}] & 1+\qp^n\cO[u,u^{-1}]\end{psmallmatrix}\rtimes (1+\qp^n\cO). 
 \end{equation}
 
 If for any $n\in \N^*$, we have $\fG(\cO/\qp^n \cO)\simeq \begin{psmallmatrix} (\cO/\qp^n \cO)[u,u^{-1}] & (\cO/\qp^n\cO) [u,u^{-1}]\\
  (\cO/\qp^n\cO) [u,u^{-1}] &  (\cO/\qp^n\cO) [u,u^{-1}] \end{psmallmatrix}\rtimes (\cO/\qp^n \cO)^\times$ and $\pi_n$ is the canonical projection, then the assumption is satisfied. However we do not know if it is true. In \cite[7.6]{marquis2018introduction}, $\fG$ is described only on fields and in \cite[13]{kumar2002kac}, only on $\C$.

For $n\in \N^*$, we set $H_n=\ker(\pi_n)\cap \left(\begin{psmallmatrix} \cO[(\qp u)^n,(\qp u^{-1})^n] &  \cO[(\qp u)^n,(\qp u^{-1})^n]\\ 
 \cO[(\qp u)^n,(\qp u^{-1})^n]	&  \cO[(\qp u)^n,(\qp u^{-1})^n] \end{psmallmatrix},\cK^*\right)$.

 \begin{Proposition}
If \eqref{e_assumption} is true, then  the filtrations $(H_n)_{n\in \N^*}$ and $(\cV_{n\lambda})_{n\in \N^*}$ are equivalent.
 \end{Proposition}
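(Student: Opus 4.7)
The plan is to compare both filtrations via explicit matrix descriptions in $G = \mathrm{SL}_2(\cK[u, u^{-1}]) \rtimes \cK^*$, using a common $U^+ \cdot U^- \cdot T$ factorization.

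First, I would use the assumption \eqref{e_assumption} together with Corollary~\ref{c_ker_pin} to establish a decomposition $H_n = (U^+ \cap H_n) \cdot (U^- \cap H_n) \cdot (T \cap H_n)$. Given $g \in H_n \subset \ker \pi_n$, Corollary~\ref{c_ker_pin} yields $g = u_+ t u_-$ with $u_+ \in U^{pm+}_{-C_0^+}$, $t \in \fT(\cO)$, $u_- \in U^{nm-}_{C_0^+}$; uniqueness (Lemma~\ref{l_uniqueness_decomposition_UUT}), together with explicit matrix multiplication, then shows that each factor retains the polynomial-ring constraint defining $H_n$. In particular, \eqref{e_assumption} forces $T \cap H_n = T_n$. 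The analogous factorization for $\cV_{n\lambda}$ is built into its definition.

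Second, I would compute the three factors explicitly as products of root subgroup elements. In affine $\mathrm{SL}_2$, the positive real roots are ${\mathring{\alpha}} + k\delta$ ($k \geq 0$) and $-{\mathring{\alpha}} + k\delta$ ($k \geq 1$), with matrix forms $\begin{psmallmatrix}1 & u^k y \\ 0 & 1\end{psmallmatrix}$ and $\begin{psmallmatrix}1 & 0 \\ u^k y & 1\end{psmallmatrix}$ respectively. Using $\lambda = {\mathring{\alpha}}^\vee + 3d$ and $N(\lambda) = 1$, the condition $\omega(y) \geq f_\Omega(\alpha) = n|\alpha(\lambda)| = n|\pm 2 + 3k|$ characterizes membership in the corresponding piece of $U^{pm+}_{[-n\lambda,n\lambda]}$, while $T_{2nN(\lambda)} = T_{2n}$. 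For $H_n$, translating the condition that matrix entries lie in $\cO[(\qp u)^n,(\qp u^{-1})^n]$ into a constraint on each root subgroup generator $x_{\pm{\mathring{\alpha}}+k\delta}(y)$, combined with the $\ker \pi_n$ condition, yields a lower bound on $\omega(y)$ that grows linearly in $|k|$ with slope $n$. Imaginary root subgroups are controlled similarly, with bound $3n|k|$ on the $\cV_{n\lambda}$ side.

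Finally, the equivalence of filtrations follows by comparing the linear growth rates. For $\cV_{cn\lambda} \subset H_n$, the bound $\omega(y) \geq cn|\pm 2 + 3k|$ dominates the $H_n$-bound (which is linear in $|k|$ with slope $n$) uniformly in $k$, once $c$ is taken large enough; similarly, $T_{2cn} \subset T_n$ for $c \geq 1/2$. For $H_{cn} \subset \cV_{n\lambda}$, the $H_{cn}$-bound (slope $cn$) dominates $n|\pm 2 + 3k|$ uniformly in $k \geq 0$ for $c$ sufficiently large (one checks $c = 5$ suffices across all real positive roots, and imaginary roots are handled in the same way), while $T_{cn} \subset T_{2n}$ for $c \geq 2$.

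The main obstacle will be the first step: verifying that the decomposition $g = u_+ t u_-$ of Corollary~\ref{c_ker_pin} respects the polynomial-ring structure, i.e.\ that $u_+$ and $u_-$ individually have matrix entries of the required form whenever $g$ does. This is delicate because $U^+$ in affine $\mathrm{SL}_2$ contains both "upper-triangular" generators $x_{{\mathring{\alpha}}+k\delta}$ and "lower-triangular" generators $x_{-{\mathring{\alpha}}+k\delta}$ (with $k \geq 1$), so the factorization produces a non-trivial matrix interplay that must be shown to preserve the polynomial-ring condition; this requires an explicit analysis of matrix multiplication combined with the valuation-decay structure of $\cO[(\qp u)^n,(\qp u^{-1})^n]$.
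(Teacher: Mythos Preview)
Your forward inclusion $\cV_{cn\lambda}\subset H_n$ via comparing linear growth of valuation bounds is essentially what the paper does (though the paper gets $c=1$ by a direct conjugation computation $U_{[-n\lambda,n\lambda]}^{pm+}=t_{-n\lambda}U_0^{pm+}t_{n\lambda}$). The approaches diverge for the reverse inclusion.

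Your step~1 is a genuine gap, not just a technicality. You want to show that if $g\in H_n$ decomposes as $u_+tu_-$ via Corollary~\ref{c_ker_pin}, then $u_+,u_-$ individually have entries in $\cO[(\qp u)^n,(\qp u^{-1})^n]$. But $u_+\in U^{pm+}$ already mixes upper-triangular contributions $x_{{\mathring{\alpha}}+k\delta}$ and lower-triangular contributions $x_{-{\mathring{\alpha}}+k\delta}$ (and imaginary root factors), and the product $u_+tu_-$ mixes all $u$-degrees; reading off per-factor polynomial-ring constraints from the product is not a routine matrix computation, and you have not indicated a mechanism for it. Your step~2 then compounds this: even granting step~1, you still need to translate the constraint ``entries in $\cO[(\qp u)^n,(\qp u^{-1})^n]$'' on $u_+$ into bounds on the individual root-subgroup parameters $c_\alpha$ in the normal form~\eqref{e_normal_form_Upma}, which is again nontrivial because the normal form is an infinite product.

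The paper avoids both issues by never decomposing $H_n$ directly. Instead it checks, by a single explicit conjugation, that $t_{-n\lambda'}H_{2n}t_{n\lambda'}\subset\mathrm{SL}_2(\cO[u,u^{-1}])\rtimes\cO^*$ for $\lambda'={\mathring{\alpha}}^\vee+d$; this means $H_{2n}$ fixes $\pm n\lambda'$ in the masure, hence $H_{2n}\subset G_{[-n\lambda',n\lambda']}$. The good-fixator decomposition~\eqref{e_good_fixator} then supplies the $U^+U^-T$ factorization with the correct bounds on $u_\pm$ built in, and the $\ker\pi_n$ condition controls the torus part. This trades your untangling problem for a one-line matrix check plus structural input from the masure.
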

 
 \begin{proof}
Let $n\in \N^*$. By Lemma~\ref{l_action_T_U}, we have $U_{[-n\lambda,n\lambda]}^{pm+}=U_{-n\lambda}^{pm+}=t_{-n\lambda} U_0^{pm+} t_{n\lambda}$. We have $t_{n\lambda}=t_{n{\mathring{\alpha}}^\vee} t_{3nd}$. We have \[t_{-3nd} U_{0}^{pm+} t_{3nd}\subset \begin{psmallmatrix} 1+(\qp^{3n} u) \cO[\qp^{3n} u] & \cO[\qp^{3n} u]\\   (\qp^{3n} u) \cO[\qp^{3n} u] & 1+(\qp^{3n} u) \cO[\qp^{3n} u]  \end{psmallmatrix} \ltimes \{1\} .\] We have \[\begin{aligned} &t_{-n{\mathring{\alpha}}^\vee} \left(\begin{psmallmatrix} 1+\qp^{3n} u \cO[\qp^{3n} u] & \cO[\qp^{3n} u]\\     (\qp^{3n} u) \cO[\qp^{3n} u] & 1+\qp^{3n} u\cO[\qp^{3n} u]  \end{psmallmatrix} \ltimes \{1\}\right) t_{n{\mathring{\alpha}}^\vee} \\ &  \subset \left(\begin{psmallmatrix} 1+\qp^{3n} u \cO[\qp^{3n} u] & \qp^{2n}\cO[\qp^{3n} u]\\    \qp^{-2n} (\qp^{3n} u)\cO[\qp^{3n} u] & 1+\qp^{3n} u \cO[\qp^{3n} u]  \end{psmallmatrix} \ltimes \{1\}\right)\\ 
 & \subset  \left(\begin{psmallmatrix} 1+(\qp^{n} u) \cO[\qp^{n} u] & \cO[\qp^{n} u]\\    \qp^n u \cO[\qp^{n} u] & 1+\qp^{n} u \cO[\qp^{n} u]  \end{psmallmatrix} \ltimes \{1\}\right)\\ &\subset  \mathrm{SL}_2(\cO[\qp^{n} u])\ltimes \{1\}\subset  \mathrm{SL}_2(\cO[\qp^{n} u,\qp^{n} u^{-1}])\ltimes \{1\} .\end{aligned}\]

 Similarly, $U_{[-n\lambda,n\lambda]}^{nm-}\subset \mathrm{SL}_2(\cO[\qp^n u,\qp^n u^{-1}])\ltimes \{1\}$.

 As $T_{2n}\subset \mathrm{SL}_2(\cO[\qp^n u,\qp^n u])\ltimes \{1+\qp^n  \cO\}$, we deduce $\cV_{n\lambda}\subset H_n$, since $\cV_{n\lambda}\subset \ker(\pi_n)$. 
 
 Now let $M\in  \mathrm{SL}_2(\cO[\qp^{2n} u,\qp^{2n} u^{-1}])\cap \ker(\pi_{2n})$ and $a\in \cK^*$. Using \eqref{e_assumption}, we write \[M=\begin{pmatrix}
1+  a_0 \qp^{2n} +\sum_{|i|\geq 1} a_i \qp^{2n |i|} u^i &  b_0 \qp^{2n} +\sum_{|i|\geq 1} a_i \qp^{2n |i|} u^i\\ 
    c_0 \qp^{2n} +\sum_{|i|\geq 1} c_i \qp^{2n |i|} u^i &  1+ d_0 \qp^{2n} +\sum_{|i|\geq 1} d_i \qp^{2n |i|} u^i
 \end{pmatrix},\]with $a_i,b_i,c_i,d_i \in  \cO$, for all $i$. Then \[t_{-nd} (M,a) t_{nd}=\left(\begin{pmatrix}
  1+ a_0 \qp^{2n} +\sum_{|i|\geq 1} a_i \qp^{n (2|i|-i)} u^i &  b_0 \qp^{2n} +\sum_{|i|\geq 1} a_i \qp^{n (2|i|-i)} u^i\\ 
    c_0 \qp^{2n} +\sum_{|i|\geq 1} c_i \qp^{n (2|i|-i)} u^i &  1+d_0 \qp^{2n} +\sum_{|i|\geq 1} d_i \qp^{n (2|i|-i)} u^i
 \end{pmatrix},a\right).\]
 
 Therefore \[\begin{aligned} & t_{-n{\mathring{\alpha}}^\vee-nd} (M,a) t_{n({\mathring{\alpha}}^\vee+d)} \\ & =\left(\begin{pmatrix}
  1+ a_0 \qp^{2n} +\sum_{|i|\geq 1} a_i \qp^{n (2|i|-i)} u^i &  \qp^{2n} (b_0 \qp^{2n} +\sum_{|i|\geq 1} a_i \qp^{n (2|i|-i)} u^i)\\ 
   \qp^{-2n} \left(c_0 \qp^{2n} +\sum_{|i|\geq 1} c_i \qp^{n (2|i|-i)} u^i\right) &  1+d_0 \qp^{2n} +\sum_{|i|\geq 1} d_i \qp^{n (2|i|-i)} u^i
 \end{pmatrix},a\right)\\ 
 &  \in \mathrm{SL}_2(\cO[u,u^{-1}])\ltimes \cO^*.\end{aligned}\]
 
By \cite[Lemma 6.10]{bardy2022twin},  $H_{2n}$ fixes $n\lambda'$, where $\lambda'={\mathring{\alpha}}^\vee+d$. Similarly, it fixes $-n\lambda'$. Therefore $H_{2n}\subset G_{[-n\lambda',n\lambda']}\cap \ker \pi_{2n}$. We have $G_ {[-n\lambda',n\lambda']}=U_{[-n\lambda',n\lambda']}^{pm+}.U_{[-n\lambda',n\lambda']}^{nm-}.\fT(\cO)$, by \eqref{e_good_fixator}.  Using the inclusion  $\langle U_{[-n\lambda',n\lambda']}^{pm+},U_{[-n\lambda',n\lambda']}^{nm-}\rangle\subset \ker \pi_n$, we deduce that $G_{[-n\lambda',n\lambda']}\cap \ker \pi_n\subset \cV_{n\lambda'}$. As $(\cV_{m\lambda})$ and $(\cV_{m\lambda'})$ are equivalent, we deduce that $(H_m)$ and $(\cV_{n\lambda})$ are equivalent. 
 \end{proof}

\printindex

\bibliography{bibliographie.bib}

\end{document}